\documentclass{amsart}
\usepackage[normalem]{ulem}
\usepackage{amssymb,epsfig,mathrsfs,mathpazo}
\usepackage{color}
\usepackage{epsfig,bm}
\usepackage[multiple]{footmisc}
\usepackage{accents} % ensures that ${\bar{Y}^\delta}'$ doesn't give a double subscript error.
\usepackage{mathtools} % for \mathclap

\newtheorem{theorem}{Theorem}[section]
\newtheorem{lemma}[theorem]{Lemma}
\newtheorem{proposition}[theorem]{Proposition}

\newtheorem{corollary}[theorem]{Corollary}

\theoremstyle{definition}

\theoremstyle{remark}
\newtheorem{remark}[theorem]{Remark}

\numberwithin{equation}{section}

\newcommand{\eps}{\varepsilon}

\newcommand{\R}{\mathbb R}

\newcommand{\N}{\mathbb N}

\newcommand{\cL}{\mathcal L}
\newcommand{\Lis}{\cL\mathrm{is}}

\newcommand{\identity}{\mathrm{Id}}

\DeclareMathOperator{\ran}{ran}

\DeclareMathOperator{\diam}{diam}
\DeclareMathOperator*{\argmin}{argmin}

\DeclareMathOperator{\divv}{div}

\DeclareMathOperator{\Span}{span}

\newcommand{\nrm}{| \! | \! |}
\newcommand{\nrmm}{| \! | \! | \! |}

\newcommand{\new}[1]{{\color{black}{#1}}}

\newcommand{\wnew}[1]{{\color{black}{#1}}}

\newcommand{\be}{\begin{equation}}
\newcommand{\ee}{\end{equation}}

\newcommand{\1}{\mathbb 1}

\newcommand{\tria}{{\mathcal T}}
\newcommand{\cP}{{\mathcal P}}

\newcommand{\oumlaut}{{\"o}}

\newcommand{\cons}{{\rm cons}}
\newcommand{\ap}{{\rm approx}}
\newcommand{\osc}{{\rm osc}}

\makeatletter
\@namedef{subjclassname@2020}{%
  \textup{2020} Mathematics Subject Classification}
\makeatother

\title{Accuracy controlled data assimilation for parabolic problems}

\date{\today}

\author{Wolfgang Dahmen}
\address{Mathematics Department, University of South Carolina, Columbia SC 29208}
\email{wolfgang.anton.dahmen@googlemail.com}
\author{Rob Stevenson}
\author{Jan Westerdiep}
\address{Korteweg-de Vries (KdV) Institute for Mathematics, University of Amsterdam, P.O. Box 94248, 1090 GE Amsterdam, The Netherlands.}
\email{r.p.stevenson@uva.nl, j.h.westerdiep@uva.nl}

\thanks{This research has been supported in part by NSF Grants DMS ID 1720297 and DMS ID 2012469, by the SmartState and Williams-Hedberg Foundation, and by the Netherlands Organization for Scientific Research (NWO) under contract.~no.~613.001.652}

\subjclass[2020]{
35B35 % Stability in context of PDEs [See also 34Dxx, 37B25, 37C20, 37C75, 37F15, 37J25, 37K45, 37L15, 49K40, 58K25, 93Dxx]
35B45, % A priori estimates in context of PDEs
35K20, % Initial-boundary value problems for second-order parabolic equations
35R25, % Ill-posed problems for PDEs
65F08, % Preconditioners for iterative methods
%65F10, % Iterative numerical methods for linear systems [See also 65N22]
65J20, % Numerical solutions of ill-posed problems in abstract spaces; regularization
65M12, % Stability and convergence of numerical methods for initial value and initial-boundary value problems involving PDEs
%65M15, % Error bounds for initial value and initial- boundary value problems involving PDEs
65M30, %Numerical methods for ill-posed problems for initial value and initial-boundary value problems involving PDEs
65M60, % Finite element, Rayleigh-Ritz and Galerkin methods for initial value and initial-boundary value problems involving PDEs
}

\keywords{State estimation and data assimilation for parabolic problems, ill-posedness, regularized least squares formulations, Carleman estimates, Fortin projectors, a priori estimates, convergence and a posteriori bounds, regularization strategies, iterative solvers and preconditioning}

\begin{document}
\begin{abstract}
This paper is concerned with the recovery of (approximate) solutions to parabolic problems from incomplete and possibly
inconsistent
observational data, given on a time-space cylinder that is a strict subset of the computational domain under consideration.
Unlike previous approaches to this and related problems our starting point is a {\em regularized least squares} formulation
in a continuous {\em infinite-dimensional} setting that is based on stable variational {\em time-space} formulations of the parabolic PDE.
This allows us to derive a priori as well as a posteriori error bounds for the recovered states with respect to a certain reference solution. In these bounds the regularization parameter is
disentangled from the underlying discretization. An important ingredient for the derivation of a posteriori bounds is the construction of suitable {\em Fortin operators}
which allow us to control oscillation errors stemming from the discretization of dual norms.
Moreover, the variational framework
allows us to contrive preconditioners for the discrete problems  whose application can be performed in linear time, and for which the condition numbers  of the
preconditioned systems are uniformly proportional to that of the regularized continuous problem.
 In particular,  we provide suitable
stopping criteria for the iterative solvers based on the a posteriori error bounds. The presented numerical experiments
quantify the theoretical findings and demonstrate the performance
of the numerical scheme in relation with the underlying discretization and regularization.
\end{abstract}

\maketitle

\maketitle

\section{Introduction}\label{sec:1}
%%%%%%%%%%%%%%%%%%%%%%%%%%%%%%%%%%%%%%%%%
\subsection{Background}\label{ssec:1.1}
%%%%%%%%%%%%%%%%%%%%%%%%%%%
Ever-increasing computational resources encourage considering more and more complex mathematical models
for simulating or predicting physical/technological processes. However, striving for increasing quantifiable accuracy
such models typically exhibit significant bias or are incomplete in that important model data or accurate constitutive laws
are missing. It is all too natural to gather complementary information from {\em data} provided by also ever-improving sensor capabilities.
Such a process of fusing models and data is often referred to as {\em data assimilation} which seems to originate from climatology
\cite{58.9,169.07}. In this context, streaming data are used to stabilize otherwise chaotic dynamical systems for prediction purposes,
typically with the aid of (statistical) filtering techniques.
While this is still an expanding and vibrant research area \cite{199.6}, the notion of data assimilation is by now understood in a
wider sense referring to efforts of improving quantifiable information extraction from synthesizing model-based and data driven approaches.
Incompleteness of underlying models or model deficiencies could come in different forms. For instance, one could lack model data such as
initial conditions, or the model involves uncalibrated coefficients  represented e.g.~by a parameter-dependent family of coefficients.

In this paper we focus on such a problem scenario where the physical law takes the form of a parabolic partial differential equation (PDE),
in the simplest case just the heat equation in combination with a known source term. We then assume that the state of interest, a (near-)solution
to this PDE, can be observed on some restricted {\em time-space cylinder} while its initial conditions are unknown.
We are then interested in recovering the partially observed state on the whole time-space domain from the given information.

This problem is known to be \wnew{(mildly)} ill-posed. This or related problems have been treated in numerous articles.
In particular, the recent work in \cite{35.925,35.926} proposes a finite element method with  built-in {\em mesh-dependent}
regularization terms \wnew{has been a primary motivation for the present paper.
Moreover, similar concepts for an analogous data-assimilation problem associated
with the wave equation have been applied in \cite{MR4221321}.} Considering first a semi-discretization in \cite{35.925}, the main results for a fully
discrete scheme in
\cite{35.926} provide a priori estimates for the recovered state on a domain that excludes a small region around the
location of initial data.

The results obtained in the present paper,
\wnew{although similar in nature, are \wnew{instead} based on a different approach and} exhibit a few noteworthy distinctions \wnew{explained  below}. \wnew{In fact,} our starting point is the formulation of a {\em regularized} estimation problem in terms of a {\em least squares} functional
in an {\em infinite-dimensional} function space setting.
\wnew{We postpone for a moment the particular role of the regularization parameter in the present context and remark first that our approach resembles a number of  other
prior studies of   ill-posed operator equations, that are also based on a {\em Tikhonov regularization} in terms of similar mixed variational formulations,
see e.g.~\cite{MR3461700,MR3808155,MR3885757,MR3079321,
MR3680382}. \wnew{These contributions are typically formulated in  more general setting
(see e.g.~\cite{MR3808155}), covering also problems
 that exhibit a stronger level of ill-posedness  such as
the Cauchy problem for second order elliptic equations, or
the backward heat equation. Although a direct comparison with these works is therefore
difficult, there are noteworthy relevant conceptual links as well as
distinctions that we will briefly comment on next.}

For instance, in \cite{MR3808155}, one arrives at a similar mixed formulation
as in the present paper exploiting then, however, just coercivity where,
for a regularizing term $\eps\|\cdot\|$,
the coercivity constant decreases proportionally to the regularization parameter.
% with the square \rsnew{for a regularizing term $\eps\|\cdot\|$, the coercivity constant is $\eqsim \eps$, so I think we should write `the coercivity constant decreases proportionally to the regularization parameter' }of the regularization parameter.
%Although no quantitative information about convergence rates and the performance of
%discretizations is given in \cite{MR3808155},
The results in \cite{MR3461700} for related numerical schemes indeed confirm
a corresponding adverse dependence of error bounds on the regularization parameter.
Moreover, these bounds are obtained only under {\em additional regularity} assumptions.
In contrast, our approach is based on numerically realizing {\em inf-sup stability}  needed to handle dual
  norms,  resulting in the present context in $\eps$-independent error estimates without any a priori additional regularity assumptions.

This hints at the perhaps main principal distinction from the above prior related work.
%While the formulations and results in the above references depend on
%a priori {\em assumed ``excess'' regularity},
Our guiding theme is that, how well one can solve the inverse problem,
depends on the
 condition of the corresponding
forward problem (already on an infinite-dimensional level), in the present context
a parabolic initial-boundary value problem. Specifically, this requires
identifying first a suitable pair $X, Y$ of (infinite-dimensional) trial- and test-spaces, for which   the forward operator   takes $X$ onto the {\em dual} $Y'$ of $Y$, without imposing any ``excess regularity'' assumptions on the solution
beyond membership to $X$. This is tantamount to a {\em stable variational formulation} of the forward problem in the sense of the Babuska-Necas Theorem.
We briefly refer to this as ``natural mapping properties''.
Drawing on the work in  \cite{11,243.85,249.99}, the present approach is solely
based on such natural mapping properties. As a consequence, the basic error analysis is independent of any data-consistency assumptions or of
the regularity of solutions in the case of consistent data, which
in general never occur in practice.

In summary, the guiding ``general hope'' is that, just exploiting natural mapping properties rather than assuming any excess regularity, should ``help'' minimizing the necessary amount of regularization in an inverse problem.
This in turn, is intimately related to the central motivation of this paper,
namely the development  of {\em efficient}
and {\em certifiable} numerical methods that should not rely on
unverifiable assumptions.
%
% \dw{It isn't meant to emphasize, more a remark. Perhaps
%it would better to emphasize the use of a time-space formulation.}
%Analyzing first this infinite-dimensional problem has
In a nutshell, for the particular problem type at hand,
significant consequences of a stable variational formulation
of the forward problem are:
% can be summarized as follows:
 the proposed numerical solvers exhibit
 a favorable quantifiable performance to be commented on further below;  regardless of data consistency and without imposing any regularity assumptions we derive sharp a priori  error bounds that do not degrade when the
regularization parameter tends to zero;
% corresponding numerical solutions
% are quasi-best approximations \rs{I think we should avoid this term} from the selected trial spaces ;
there is no need for tuning parameters  {inside} any mesh-dependent stabilization terms; we can derive computable {\em a posteriori error bounds} that
are valid without any excess regularity assumptions, for arbitrary (inconsistent) data, and, in the present particular inverse problem, are independent of the regularization parameter. %The same comments apply to \cite{MR3680382} as well.

However, it should be emphasized that our
  ``general hope'' could so far be realized only
for the current rather mildly ill-posed problem class.
 The following remarks \wnew{elaborate a bit more on} some of the related
 aspects.}%of these claims.

\wnew{
(i) Respecting natural mapping properties reveals, in particular, that
 a unique minimizer of
the objective functional exists for {\em any arbitrarily small regularization parameter and even for a vanishing regularization parameter}. In fact, a least squares
formulation by itself turns out to be already a sufficient regularization. However, the {\em condition number} of corresponding
discrete systems may increase   with decreasing  regularization parameter. Our numerical experiments will shed some light on
this interdependence. We  use this insight to develop \wnew{efficient} preconditioners for the discrete problems.
\wnew{In fact, w}ithin the limitations of the infinite-dimensional formulation the solvers will be seen to exhibit  {a quasi-optimal performance for any fixed regularization parameter. Even for the mildly-ill posed problem under
consideration this seems to be unprecedented in the literature.
 In that sense, the primary role of a non-vanishing regularization parameter for us is to facilitate a rigorous performance
analysis of the iterative solver in favor of its quantitative  improvement.}

(ii) A stable infinite dimensional variational formulation is also an essential prerequisite for %contriving {\em optimal preconditioners}
%for resulting discrete problems as well as for
deriving rigorous {\em a posteriori} \wnew{regularity-free} - meaning they
are valid without any excess regularity assumptions -
error bounds for the recovered states. \wnew{As shown later,} such bounds can be used, in particular,
to identify suitable stopping criteria for
iterative solvers.
% \wnew{We remark that using stabilized space-time (discontinuous Galerkin) finite element schemes
%for an analogous data assimilation problem
%for the wave-equation, a posteriori error bounds are also derived in \cite{MR3079321}. \rnew{I suggest not to go into this discussion.} However, these bounds are not based
%on natural mapping properties and can be analyzed only under excess regularity assumptions. Moreover, in contrast to our setting, these bounds degrade when the stabilization parameters become small and no performance quantification of
%the numerical solvers is given.
Finally, we demonstrate some practical consequences of regularity-free computable a posteriori bounds in Section \ref{sec:numexp}. We indicate
their use for estimating data consistency errors as well as for choosing the regularization parameter in a way that accuracy of the results is not compromized in any essential way while enhancing solver performance.

%\rs{I'm not sure whether the following applies}
%Moreover,  we plan to explore their use for properly adapting the regularization parameter in combination with data-uncertainties  in forthcoming work.

(iii) %\wnew{In \cite{35.925,35.926} or \cite{MR3079321} the authors tie
%the regularization (up to further fixed multiplicative stabilization constants)
%to a mesh-size parameter $h$. Here, w
Respecting natural
mapping properties, allows us to ``disentangle'' discretization and
regularization by studying first the intrinsic necessary ``strength'' of the regularization in the infinite-dimensional setting.
Moreover, it turns out that additional regularization beyond the least squares formulation is not necessary on the infinite-dimensional level, persists to remain true for the proposed
inf-sup stable discretizations.  Choosing nevertheless a positive
regularization parameter in favor of a better and rigorously founded solver
performance, still requires a balanced choice so as to warrant optimal
achievable accuracy of the state estimate. Our formulation reveals that
the relevant balance criterion is then the achievable approximation accuracy of
the trial space. Only sufficiently high regularity, typically hard to check
in practice,  allows one to express this quantity in terms of a uniform
mesh-size. Our approach
 will be seen to offer
\wnew{more flexibility and potentially} different choices of
regularization parameters than those stemming from the a priori fixed mesh-dependent approach  in \cite{35.925,35.926} or \cite{MR3079321}.
This concerns, for instance,   adaptively refined meshes or higher order discretizations.

A perhaps more subtle further consequence of exploiting natural
mapping properties are
somewhat stronger {\em a priori estimates} than those obtained  in previous works.
% because a weaker version of a Carleman inequality becomes applicable \rnew{which version do you mean?}}.  More importantly, regardless of data consistency, \wnew{the accuracy
%of the numerical approximation in a ``domain-reduced stability norm'' is (without any regularity assumptions) quasi-optimal
%with respect to the trial norm when taking sufficiently small regularization parameters. \rnew{I suggest to skip this} In contrast, the error estimates in \cite{35.926} depend
%on stabilization parameters, are only valid under excess regularity assumptions, and, even in the present mildly ill-posed case, degrade to non-optimal order \rnew{yes?} with
%decreasing mesh-size.

Of course, the robustness of our results with respect to the regularization parameter reflects the mild degree of ill-posedness of the data-assimilation
problem under consideration. This cannot be expected to carry over to less
stable problems in exactly the same fashion. We claim though that important elements will persist to hold, for instance, for conditionally stable problems.
In particular, non-vanishing regularization parameters will then be essential
and regularity-free a posteriori bounds will be  {all} the more important for
arriving at properly balanced choices in the spirit of the strategy indicated
in Section \ref{sec:numexp}.
A detailed discussion is beyond the scope of this paper and is therefore deferred to forthcoming work.
}
%for sufficiently smooth data,   taking a sufficiently small regularization parameter, we show that
%our method returns a quasi-best approximation  \rs{I think we should avoid this term}  from the trial space (without any regularity assumptions).
%\rs{I'm not fully happy with the last sentence. Knowing that the data-oscillation term is of higher order (assuming smoothness of the data),

%\rs{Some reference to \cite{35.926} will be made here.}

%%%%%%%%%%%%%%%%%%%%%%%%%%%%%%%%%%%%%
\subsection{Layout}
\label{ssec:1.3}
%%%%%%%%%%%%%%%%%%%%%%%%%%%%%%
In Section \ref{S2} we present a stable weak time-space formulation of a parabolic model problem and introduce
the data assimilation problem considered in this work.
%recall in these terms a
%central tool in our and preceding related works, namely a variante of a {\em Carleman Estimate}, \cite{??}.
Based on these findings we  propose in Section \ref{Sregularized} a regularized {\em least squares formulation} of the
state estimation task. This formulation permits model as well as data errors
%\rs{Do we take them into account? Indeed, instead of minimizing the data error under the constraint of satisfying the PDE, we minimize the sum of data and model errors. The reason, however, is that this gives more easily rise to a numerical scheme (I remember that I started with the other approach but run in some kind of problems}
as the recovered states are neither required to
satisfy the parabolic equation exactly nor to match the data. We then derive {\em a priori} as well as {\em a posteriori} error estimates for the infinite-dimensional minimizer
as well as for the minimizer over a finite dimensional trial space revealing the basic interplay between model inconsistencies, data errors,
and regularization strength.

Since the ``ideal'' infinite-dimensional objective functional involves a dual-norm a practical numerical method needs to
handle this term.  We show that a proper discretization of the dual norm is tantamount to identifying a stable
{\em Fortin operator}. For the given formulation of the parabolic problem this turns out to impose theoretical limitations on
discretizations based on a standard second order variational formulation. Therefore we consider in Section \ref{SFOSLS} an equivalent first order system least squares formulation.
Section \ref{sec:Fortin} is devoted to the construction of Fortin operators for both settings. Moreover, we present in Sections \ref{Sprac1} and
\ref{Sprac2}
%\think{optimal preconditioners} \rs{maybe here just say `preconditioners' because it isn't easy to say in a few words what we will realize}
effective preconditioners for the iterative solution of the arising discrete problems along with suitable
stopping criteria.
Section \ref{sec:numexp} is devoted to numerical experiments that quantify the theoretical findings and illustrate the
performance of the numerical schemes, \wnew{in particular, depending on the
choice of the regularization parameter which, in principal, could be chose
as zero}.
 We conclude in Section \ref{sec:outlook} with a brief discussion of several ramifications of the results, \wnew{including the application of a posteriori bounds
 for estimating data-consistency errors}.
 %First we
%recast the present setting into a somewhat different framework for state estimation considered in \cite{BCDDPW2,MPPY},
%where a different type of estimates is obtained. Second, we discuss  the relevance and possible applications  of the current
%results to classical dynamic data assimilation for parabolic problems.
%For linear spaces $E$ and $F$, sequences $\Phi=(\phi_j)_{j \in J} \subset E$, $\Psi=(\psi_i)_{i \in I} \subset F$, $f \in F^*$, and  a linear $A\colon E \rightarrow F^*$,
%we define the column vector $f(\Psi):=[f(\psi_i)]_{i \in I}$ and matrix $(A \Phi)(\Psi):=[(A \phi_j)(\psi_i)]_{i \in I,j \in J}$.
%If $E=F$ is an inner product space, then with $R\colon E \rightarrow E'$ denoting the Riesz map, we set $\langle\Psi,\Phi\rangle:=(R \Phi)(\Psi)=[(R \phi_j)(\psi_i)]_{i \in I,j \in J}=
%[\langle \psi_i,\phi_j\rangle]_{i \in I,j \in J}$.

\subsection{Notations}\label{ssec:1.2}
In this work, by $C \lesssim D$ we will mean that $C$ can be bounded by a multiple of $D$, independently of parameters which C and D may depend on.
Exceptions are given by the parameter\new{s} $\eta$ \new{and $\omega$} in the Carleman estimate \eqref{carleman}, the polynomial degrees of various finite element spaces, and the dimension $d$ of the spatial domain $\Omega$.
Obviously, $C \gtrsim D$ is defined as $D \lesssim C$, and $C\eqsim D$ as $C\lesssim D$ and $C \gtrsim D$.

For normed linear spaces $E$ and $F$, by $\cL(E,F)$ we will denote the normed linear space of bounded linear mappings $E \rightarrow F$,
and by $\Lis(E,F)$ its subset of boundedly invertible linear mappings $E \rightarrow F$.
We write $E \hookrightarrow F$ to denote that $E$ is continuously embedded into $F$.
For simplicity only, we exclusively consider linear spaces over the scalar field $\R$.

%For linear spaces $E$ and $F$, sequences $\Phi=(\phi_j)_{j \in J} \subset E$, $\Psi=(\psi_i)_{i \in I} \subset F$, $f \in F^*$, and  a linear $A\colon E \rightarrow F^*$,
%we define the column vector $f(\Psi):=[f(\psi_i)]_{i \in I}$ and matrix $(A \Phi)(\Psi):=[(A \phi_j)(\psi_i)]_{i \in I,j \in J}$.
%If $E=F$ is an inner product space, then with $R\colon E \rightarrow E'$ denoting the Riesz map, we set $\langle\Psi,\Phi\rangle:=(R \Phi)(\Psi)=[(R \phi_j)(\psi_i)]_{i \in I,j \in J}=
%[\langle \psi_i,\phi_j\rangle]_{i \in I,j \in J}$.

%\newpage

\section{Problem Formulation and Preview}  \label{S2}
%%%%%%%%%%%%%%%%%%%%%%%%%%%%%%%%%%%%%%%%%%%%
For a given  domain $\Omega \subset \R^d$ and time-horizon $T>0$, let
$ I:=[0,T]$. Let  $a(t;\cdot,\cdot)$ denote a bilinear form on $H^1_0(\Omega) \times H^1_0(\Omega)$ such that for
any $\theta,\zeta \in H^1_0(\Omega)$, the function $t \mapsto a(t;\theta,\zeta)$ is measurable on $I$. Moreover, we assume
  that for almost all $t \in I$, $a(t;\cdot,\cdot)\colon H^1_0(\Omega) \times H^1_0(\Omega)\to \R$ is {\em bounded} and {\em coercive}, i.e.
\begin{alignat}{3} \label{1}
|a(t;\theta,\zeta)|
& \lesssim  \|\theta\|_{H^1(\Omega)} \|\zeta\|_{H^1(\Omega)} \quad &&(\theta,\zeta \in H^1_0(\Omega)), % \quad &&\text{({\em boundedness})},
\\ \label{2}
 a(t;\theta,\theta)  &\gtrsim \|\theta\|_{H^1(\Omega)}^2 \quad
&&(\theta \in {H^1_0(\Omega)}), % \quad &&\text{({\em coercivity})}.
\end{alignat}
hold with constants independent of $t\in I$. By Lax Milgram's Theorem, $A(t)$, defined by $(A(t)\theta)(\zeta):=a(t;\theta,\zeta)$,
($\theta,\,\zeta\in H^1_0(\Omega)$), belongs to $\Lis(H^1_0(\Omega),H^{-1}(\Omega))$.

Before discussing the parabolic data assimilation problem, we recall some facts about a time-space variational formulation of the \emph{parabolic initial value problem} -- the corresponding forward problem --
of the form
\be \label{initial}
\left\{
\begin{array}{rl}
\frac{d z}{d t}(t) +A(t) z(t)&\!\!\!= h(t) \quad(t \in I\text{ a.e.}),\\
\gamma_0 z &\!\!\!= z_0,
\end{array}
\right.
\ee
\new{with trace map $\gamma_t\colon z \mapsto z(t)$.}
With the spaces
$$
X:=L_2(I;H^1_0(\Omega)) \cap H^1(I;H^{-1}(\Omega)),\quad Y:=L_2(I;H^1_0(\Omega)),
$$
the operator $B$ defined by
$$
(Bw)(v):=\int_I {\textstyle \frac{d w}{dt}}(t)(v(t))  +
a(t;w(t),v(t)) dt,
$$
belongs to $\cL(X,Y')$.
Recall also that
\be \label{embedding}
X \hookrightarrow C(I;L_2(\Omega))
\ee
with the latter space being equipped with the norm on $L_\infty(I;L_2(\Omega)).$
In particular, this implies that \new{$\gamma_t \in \cL(X,L_2(\Omega))$}, with a norm that is uniformly bounded in $t \in I$.
The resulting weak formulation of \eqref{initial} reads as
$$
Bz = h,\quad \gamma_0 z= z_0,
$$
and it is known (e.g.~see \cite[Ch.XVIII, \S3]{63}, \cite[Ch.~IV, \S26]{314.9}, \cite{247.15})
to be well-posed in the sense that
\be \label{boundedinvertible}
 \left[\begin{array}{@{}c@{}} B \\ \gamma_0\end{array} \right]\in \Lis(X,Y' \times L_2(\Omega)).
\ee

Turning to the data assimilation problem,
suppose in what follows that $\omega \subset \Omega$ is a fixed non-empty sub-domain (possibly much smaller than $\Omega$)
and that we are given data $f \in L_2(I\times \omega)$ as well as $g\in Y'$.
The {\em data-assimilation} problem considered in this paper
 is to seek a state $u\in X$, that approximately satisfies
 $B u= g$, while also closely agreeing with $f$
in $L_2 ( I\times \omega)$, see \cite{35.925,35.926}. To make this precise, ideally one would like to solve
 \begin{equation} \label{11}
\left\{
\begin{array}{rl}
\frac{d u}{d t}(t) +A(t) u(t)&\!\!\!= g(t) \quad(t \in I\text{ a.e.}),\\
u(t)|_{\omega}&\!\!\!= f(t)\quad(t \in I).
\end{array}
\right.
\end{equation}
%In variational form the first equation reads as $Bu=\tilde{g}$, with
However, in general such data ${(g,f)}$ may be {\em inconsistent}, i.e., \eqref{11} has no solution and is therefore
 ill-posed. To put this formally, denoting by  $\Gamma_\omega$  the restriction of a function on $I \times \Omega$ to a function on $I \times \omega$, we have $\Gamma_\omega \in \cL(X,L_2(I\times \omega))$,  i.e.,
 $\Gamma_\omega$ is bounded on $X$. However, the range of the operator
$$
B_\omega :=  \left[\begin{array}{@{}c@{}} B \\ \Gamma_\omega\end{array} \right]\in \cL(X,Y' \times L_2(I\times \omega))
$$
induced by \eqref{11},
is a strict subset of $Y' \times L_2(I\times \omega)$.

Before addressing this issue, it is instructive to understand the case of a {\em consistent} pair $(g,f)\in \ran B_\omega$, i.e.,  {when} there exists a $u\in X$ such that $(g,f)=(Bu,\Gamma_\omega u)$. %\mnote{Perhaps we should use the same tilde notation for data and sitinguish them only by saying explicitly ``consistent'' (or ``inconsistent''). In fact,we could denote date then in general without tilde.}
\begin{remark}
\label{rem:unique}
Any data consistent pair $(g,f)\in \ran B_\omega$ determines a unique state $u\in X$ satisfying \eqref{11}.
\end{remark}
 That this is indeed the case
 can be derived from the following crucial tool that has been
employed in prior related studies such as  \cite{35.925,35.926} and will be heavily used
in what follows as well.
\new{For $\eta \in (0,T)$ let
 $$
 X_\eta:=L_2([\eta,T];H^1_0(\Omega)) \cap H^1([\eta,T];H^{-1}(\Omega)).
 $$
\emph{Fixing} both $\eta$ and a subdomain $\omega \subset \Omega$},
 a version of the so-called \emph{Carleman Estimate} says in the present terms
\be \label{carleman}
\|w\|_{X_\eta} \lesssim \| {\Gamma_\omega} w\|_{L_2(I \times \omega)}+\|Bw\|_{Y'} \quad(w \in X).
\ee
\new{
\begin{remark}
\label{rem:carlass}
The validity of \eqref{carleman}
has been established in \cite[Thm.~2]{35.925} for the \emph{heat operator} (i.e., $a(t;\theta,\zeta)=\int_\Omega \nabla \theta(t) \cdot  {\nabla} \zeta(t)\,d{\bf x}$) and $\Omega \subset \R^d$ being a \emph{convex polytope}. It holds in greater generality though. For instance, the argument
in the proof of \cite[Lemma 7]{35.925} still works when $\Omega$ is star-shaped w.r.t.~an $x_0 \in \Omega$
and any open $\omega \subset \Omega$ that contains $x_0$.
In what follows up to this point we will tacitly assume at this point
suitable problem specifications that guarantee the validity of \eqref{carleman} without further mentioning.
\end{remark}
}

Returning to the uniqueness of $u$ given consistent  data ${(g,f)}$, suppose there exist two solutions, then their difference $e \in X$ satisfies
$\|{\Gamma_\omega} e\|_{L_2(I \times \omega)}+\|Be\|_{Y'}=0$, meaning in view of \eqref{carleman}  that $\|e\|_{X_\eta}=0$,
and so thanks to $X_\eta \hookrightarrow C([\eta,T];L_2(\Omega))$, that $e(t)=0$ for $t \in [\eta,T]$. From $X \hookrightarrow C([0,T];L_2(\Omega))$, and the fact that $\eta>0$ is arbitrary, it follows that $e=0$.

However, the nature of the Carleman Estimate indicates that one cannot stably recover the trace $\gamma_0 u$ which would then together with $g$ stably recover $u$. In fact, one may convince oneself that significantly different initial data (far) outside $\omega$ may give
rise to homogeneous solutions of \eqref{initial} that hardly differ
on $I\times \omega$. Thus, even for a state $u\in X$ from  (nearly) consistent data ${(g,f) \in Y' \times L_2(I\times \omega)}$
% \sout{or for small $e_\cons$} \rs{$e_\cons$ not yet defined}
we cannot expect to find an accurate numerical approximation to $u$ on the whole  {time-space} cylinder $I \times \Omega$. Moreover, any perturbation of the data may land outside $\ran B_\omega$.
 %, so that any reasonable formulation
% of the data assimilation problem on the continuous level requires some {\em regularization}
% \rs{I don't think that with a least squares formulation, $B_\omega$ not being surjective automatically requires regularization. Indeed, let $L \in \cL(A,B)$ with $\|L a\|_B \eqsim \|a\|_A$, i.e. $L \in \Lis(A,\ran L)$. Then even for $\ran L \subsetneq B$, for any $b \in B$ $\argmin_{a \in A} \|L a-%b\|_B$ is well-posed.}
% which then will incidentally shed
% light on the effect or need of regularization on the discrete level.}
% {\color{magenta}That is true but we haven't introduced the least squares formulation yet and the least squares formulation {\bf is} in fact
% a regularization. I was referring to the original problem.}
% \rs{I don't understand what you mean with this}

In practice, neither will the  data/measurements $(g,f)\in Y'\times L_2(I\times \omega)$ be exact, nor will the observed state behind $f$
satisfy the model -- here a parabolic PDE --  exactly. Thus, in general a pair of data $(g,f)\in Y'\times L_2(I\times \omega)$
allows one to recover any hypothetical source $u\in X$ only within some {\em uncertainty}. A central theme in this article
is to quantify this uncertainty (theoretically and numerically)
by properly exploiting the information provided by the PDE model, and the data. %\rs{Here you distinguish between data and measurements. What do you mean by that?}
%and the data to nevertheless recover a state $u\in X$, that in some sense ``explains'' the observations, rests on the
While any such assimilation attempt rests on the basic hypothesis  that
the  data ${(g,f)}$ are ``close'' in ${Y' \times L_2(I\times \omega)}$
 to a consistent pair $(Bu, u|_{I\times\omega})\in \ran \,B_\omega$, for some $u\in X$, this ``closeness'' is generally not known beforehand.

 To perform such a recovery we formulate in the next section a family of {\em regularizations} of the ill-posed problem
 \eqref{11} involving a parameter $\eps \ge 0$, taking data errors and model bias into account.  We then show, first on the continuous
 {\em  infinite-dimensional} level, that for each $\eps \ge 0$
 there exists a unique regularized solution $u_\eps\in X$. Letting this precede an actual {\em discrete} scheme,
   will be important for a number of issues, such as
 the design of efficient iterative solvers,
 the derivation of {\em a posteriori} error bounds, as well as disentangling regularization and discretization
 in favor of an overall good balance of uncertainties. Aside from the question what a preferable choice of $\eps$ would be in that latter respect, a central issue will be to assess the quality of a regularized
 state $u_\eps$ and of its approximation $u^{\delta,\delta}_\eps$ from a given finite-dimensional trial space $X^\delta\subset X$ provided
 by our numerical scheme.

 To that end, recall that generally (for inconsistent data) the idealized assimilation problem \eqref{11} has no  solution.
 % there is in general no unique solution $u$ of the idealized assimilation problem \eqref{11}.
 So whatever state $u\in X$
 may be used to ``explain'' the data, should be viewed as a {\em candidate} or {\em reference state} that is connected with the recovery task
 through the {\em consistency error}
 %\rs{Factor $\frac12$ removed}
 \be
\label{econs}
%\wnew{e_\cons(u) =} ~
e_\cons(u) := {\sqrt{\|B u-g\|^2_{Y'}+\|\Gamma_\omega u-f\|^2_{L_2(I \times \omega)}}}.
% , \,\,
%\text{i.e., }\, (g,f)= (\cB_\omega u, \Gamma_\omega u).
\ee
%\rs{This slightly modified definition will make life easier at most (not all) places}
% \sout{where we will often simply write $e_\cons = e_\cons(u)$.} \rs{Since we don't anymore consider $u$ as a fixed function throughout the %paper, it seems better to always write $e_\cons(u)$.}

At the heart of our analysis is then an a priori estimate of the type
 \be
 \label{type}
 \|u-u_\eps^{\delta,\delta}\|_{X_\eta} \lesssim  {e_\cons(u)}+ {e^\delta_\ap(u)} +\eps\|\gamma_0 u\|_{L_2(\Omega)},
 \ee
where $e^\delta_\ap(u)$ denotes the error of the best approximation to $u$ from $X^\delta$ in $X$, thereby {implicitly quantifying} the regularity of the state $u$. Recall that, as always, the constant in this estimate absorbed by the $\lesssim$-symbol may depend on $\eta>0$, but neither on $u$ {nor on $\eps$}.

It is important to note that \eqref{type} is valid for {\em any} $u\in X$, not making use of the assumption that $(\new{B} u,\Gamma_\omega {u})$
be close to $(g,f)$. It is of evident value, of course, for states $u$ with small or at least moderate consistency error. This suggests singling out
a particular state
$$
 {u_0 := \argmin_{u\in X} e_\cons(u)}
$$
that minimizes the consistency error.
As in the case of consistent data, we will see that $u_0$ is unique, and, and as is suggested by its notation, it will turn out to be the limit for $\eps\downarrow 0$ of the regularized solutions $u_\eps$ that will be defined later.
%that minimizes the consistency error $e^*_\cons = e_\cons(u^*)$.
One reason for not confining the error estimates -- or perhaps better termed {\em distance estimates} -- to the specific state ${u_0}$
is the potential significant model bias. In fact, we view it as a strength to keep \eqref{type} general
since this covers automatically various somewhat specialized scenarios.
For instance, if the data were exact,
%\sout{which in our convention means $(f,g)= (\tilde f,\tilde g)$,}
 i.e., ${e_\cons(u_0) =0}$, \eqref{type},  {for $u=u_0$,} shows the dependence
of the error just on $\eps$ and the choice of the discretization. Moreover, if the model is exact (or the model bias is negligible
compared with data accuracy) it will later be seen how to get a ``nearly-computable'' bound for ${e_\cons(u_0)}$ and hence
an idea of the model bias (due to $g$) and measurement errors in $f$.
%\rs{To be sure: with measurement errors you mean errors in $f$, and with model bias errors in $g$?}
Another case of interest is $u=u_\eps$ because this is the ``compromise-solution''
suggested by the chosen regularization and targeted by
the numerical scheme.

 Finally, while in principle, $\eps$ can be chosen as small as we wish (even zero), it will be seen to benefit solving
the discrete problems by choosing $\eps$ as large as possible so as to remain just dominated, ideally, by
$e_\cons(u_0)$, in practice, by the announced a posteriori bounds.
%the other terms on
%the right hand side of  \eqref{type} (or in the announced a posteriori bounds).

%---------------------------------------------

%\newpage
\section{Regularized least squares} \label{Sregularized}
%%%%%%%%%%%%%%%%%%%%%%%%%%%%%%%%%%%%%
\newcommand{\cD}{\mathcal{D}}
\newcommand{\cC}{\mathcal{C}}
%%%%%%%%%%%%%%%%%%%%
Knowing that the data \new{assimilation} problem is ill-posed and taking the preceding considerations into account,  we consider for
some parameter $\eps\geq 0$ the regularized
{\em least squares problem} of finding the minimizer $u_\eps$ over $X$ of
%\rs{$\frac12$ skipped}
\be
\label{Geps}
G_\eps\colon w \mapsto \|Bw-g\|_{Y'}^2+\|\Gamma_\omega w-f\|_{L_2(I \times \omega)}^2+\eps^2\|\gamma_0 w\|_{L_2(\Omega)}^2,\footnotemark
\ee
\footnotetext{We could have included additional weights in front of the first terms that could reflect a priori knowledge on model- or data-fidelity. Since this would not affect the subsequent developments we disregard this option for simplicity of exposition.}%
where, as before,  $\Gamma_\omega$ is the restriction of a function on $I \times \Omega$ to a function on $I \times \omega$. The resulting Euler-Lagrange equations read as
\be \label{EL}
\langle B u_\eps \!-\!g,B w\rangle_{Y'}\!+\!
\langle \Gamma_\omega u_\eps\! -\!f,\Gamma_\omega w\rangle_{L_2(I \times \omega)}\!+\!\eps^2 \langle \gamma_0 u_\eps,\gamma_0 w \rangle_{L_2(\Omega)}=0 \quad(w \in X).
\ee
Since $\Gamma_\omega \in \cL(X,L_2(I \times \omega))$, and on account of \eqref{boundedinvertible}, for $w \in X$
it holds that
\be \label{equivalence}
\eps^2 \|w\|_X^2 \lesssim \|Bw\|_{Y'}^2+\|\Gamma_\omega w\|_{L_2(I \times \omega)}^2+\eps^2\|\gamma_0 w\|_{L_2(\Omega)}^2
 \lesssim \max(1,\eps^2) \|w\|_X^2.
 \ee
By the Lax-Milgram Lemma, we thus know that for $\eps>0$  the minimizer  $u_\eps$ exists uniquely, and satisfies
\be \label{basic_stab}
\|u_\eps\|_X \lesssim \max(\eps^{-1},1) \big(\|g\|_{Y'}+\|f\|_{L_2(I \times \omega)}\big).
\ee

Selecting any reference state $u \in X$,  similarly to \eqref{basic_stab} one can show that for $\eps>0$
$$
\|u-u_\eps\|_X \lesssim  \max(\eps^{-1},1)\big(\eps \|\gamma_0 u\|_{L_2(\Omega)}+  {e_\cons(u)}\big),
$$
 {see \eqref{econs}.}
This result is by no means satisfactory.
With the aid of \eqref{carleman}, %and the subsequent observation,
 much better bounds will be established for $\|u-u_\eps\|_{X_\eta}$.

\begin{remark}
\label{rem:G0}
 Also for $\eps=0$, the minimizer $u_0$ of $G_0(\cdot)= {e_\cons(\cdot)^2}$ over $X$ exists uniquely.
  %(the latter being equal to $\argmin_{u \in X} e_\cons(u)$, cf. \eqref{u0})
Indeed, suppose there are two minimizers. Then,
by \eqref{EL},  their difference $e_0$ satisfies
$$
\langle B e_0,B w\rangle_{Y'}+
\langle \Gamma_\omega e_0,\Gamma_\omega w\rangle_{L_2(I \times \omega)}=0 \quad(w \in X),
$$
and so $\|\Gamma_\omega e_0\|^2_{L_2(I \times \omega)}+\|Be_0\|^2_{Y'}=0$. As we have seen, using \eqref{embedding} and \eqref{carleman} this implies $e_0=0$.
 \end{remark}

A frequently used tool reads as follows.
\begin{lemma}
\label{lem_basic}  For any $w \in X$
% \wnew{and any
%$u\in X$, $(g,f)= \cB_\omega u$,
one has
$$
\|u-w\|_{X_\eta} \lesssim \sqrt{G_{{0}}(w)}+{e_\cons(u)}. %\wnew{(u)}.
$$
\end{lemma}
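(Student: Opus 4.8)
The plan is to bound $\|u-w\|_{X_\eta}$ by applying the Carleman estimate \eqref{carleman} to the difference $u-w \in X$, and then control the two resulting terms $\|\Gamma_\omega(u-w)\|_{L_2(I\times\omega)}$ and $\|B(u-w)\|_{Y'}$ by triangle inequalities that split off the data $(g,f)$. Concretely, first I would write
\[
\|u-w\|_{X_\eta} \lesssim \|\Gamma_\omega(u-w)\|_{L_2(I\times\omega)} + \|B(u-w)\|_{Y'},
\]
which is exactly \eqref{carleman}.

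Next I would insert $\pm f$ and $\pm g$ respectively: $\|\Gamma_\omega(u-w)\|_{L_2(I\times\omega)} \le \|\Gamma_\omega u - f\|_{L_2(I\times\omega)} + \|\Gamma_\omega w - f\|_{L_2(I\times\omega)}$ and $\|B(u-w)\|_{Y'} \le \|Bu-g\|_{Y'} + \|Bw-g\|_{Y'}$. Adding these, the terms involving $u$ together are (up to the constant $\sqrt 2$) bounded by $e_\cons(u) = \sqrt{\|Bu-g\|_{Y'}^2 + \|\Gamma_\omega u - f\|_{L_2(I\times\omega)}^2}$, and the terms involving $w$ are bounded by $\sqrt 2\,\sqrt{\|Bw-g\|_{Y'}^2 + \|\Gamma_\omega w - f\|_{L_2(I\times\omega)}^2} \le \sqrt2\,\sqrt{G_0(w)}$, since $G_0(w) = \|Bw-g\|_{Y'}^2 + \|\Gamma_\omega w - f\|_{L_2(I\times\omega)}^2$ by \eqref{Geps} with $\eps=0$. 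Absorbing all absolute constants into $\lesssim$ yields the claim.

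There is really no serious obstacle here — this is a short chain of the Carleman estimate plus two applications of the triangle inequality and the elementary inequality $a+b \le \sqrt2\sqrt{a^2+b^2}$. The only point worth a word of care is that \eqref{carleman} is stated for elements of $X$, so one must note $u-w \in X$, which is immediate since $u,w \in X$ and $X$ is a linear space; and one should recall (as flagged in Remark~\ref{rem:carlass}) that the problem data are assumed to be such that \eqref{carleman} holds. The estimate is also ``regularity-free'' in the sense emphasized in the introduction: no assumption is made that $u$ is a (near-)solution, and the $\eps$-dependent term is absent because $G_0$ carries no regularization. If one wished to be slightly more economical one could instead apply \eqref{carleman} directly to $u-w$ and bound $\|\Gamma_\omega(u-w)\|_{L_2(I\times\omega)}^2 + \|B(u-w)\|_{Y'}^2 \lesssim G_0(w) + e_\cons(u)^2$ using $(\alpha+\beta)^2 \le 2\alpha^2 + 2\beta^2$ coordinatewise, which avoids even mentioning $\sqrt2$; either route gives the stated bound.
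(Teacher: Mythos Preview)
Your proof is correct and essentially identical to the paper's: the paper also applies the Carleman estimate \eqref{carleman} to $u-w$ and then uses the triangle inequality for the norm $\sqrt{\|\cdot\|_{L_2(I\times\omega)}^2+\|\cdot\|_{Y'}^2}$ (which is precisely the ``more economical'' variant you mention at the end). The only cosmetic difference is that you split the two components first and recombine via $a+b\le\sqrt2\sqrt{a^2+b^2}$, whereas the paper applies the triangle inequality directly in the combined $\ell^2$-norm.
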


\begin{proof}
We infer from \eqref{carleman}  and a triangle-inequality for the norm $\sqrt{\|\,\|_{L_2(I \times\omega)}^2+\|\,\|_{Y'}^2}$ that
$$
\|u-w\|_{X_\eta} \lesssim \sqrt{\|\Gamma_\omega(u-w)\|_{L_2(I\times \omega)}^2+\|B(u-w)\|_{Y'}^2} \leq \sqrt{G_0(w)}+e_\cons(u)
$$
which confirms the claim. \hfill$ \qedhere$
\end{proof}

When taking as reference state $u=u_\eps$, we obtain the following \emph{a posteriori bound}.
\begin{proposition}
\label{prop1}  For $\eps \geq 0$ and $w \in X$,
one has
$$
\|u_\eps-w\|_{X_\eta} \lesssim \sqrt{G_{\eps}(w)}.
$$
\end{proposition}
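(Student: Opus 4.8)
The plan is to apply Lemma~\ref{lem_basic} with the reference state $u=u_\eps$, which reduces the claim to controlling $e_\cons(u_\eps)$ by $\sqrt{G_\eps(w)}$. By the very definition \eqref{econs}, $e_\cons(u_\eps)^2 = \|Bu_\eps-g\|_{Y'}^2 + \|\Gamma_\omega u_\eps - f\|_{L_2(I\times\omega)}^2 \le G_\eps(u_\eps)$. Since $u_\eps$ is the minimizer of $G_\eps$ over $X$, we have $G_\eps(u_\eps) \le G_\eps(w)$ for any $w\in X$, hence $e_\cons(u_\eps) \le \sqrt{G_\eps(w)}$.

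It remains to also bound $\sqrt{G_0(w)}$ — the first term coming out of Lemma~\ref{lem_basic} — by $\sqrt{G_\eps(w)}$. But this is immediate from the definition \eqref{Geps}: $G_0(w) = \|Bw-g\|_{Y'}^2 + \|\Gamma_\omega w - f\|_{L_2(I\times\omega)}^2 \le G_\eps(w)$ since the only difference is the nonnegative term $\eps^2\|\gamma_0 w\|_{L_2(\Omega)}^2$. Combining, Lemma~\ref{lem_basic} gives
$$
\|u_\eps - w\|_{X_\eta} \lesssim \sqrt{G_0(w)} + e_\cons(u_\eps) \le \sqrt{G_\eps(w)} + \sqrt{G_\eps(w)} \lesssim \sqrt{G_\eps(w)},
$$
which is the assertion. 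For $\eps>0$ the existence and uniqueness of the minimizer $u_\eps$ is guaranteed by the Lax--Milgram argument around \eqref{equivalence}, and for $\eps=0$ by Remark~\ref{rem:G0}, so the statement is meaningful for all $\eps\ge 0$.

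There is essentially no obstacle here: the proposition is a short corollary of Lemma~\ref{lem_basic} together with the two trivial monotonicity facts $e_\cons(u_\eps)^2\le G_\eps(u_\eps)\le G_\eps(w)$ and $G_0(w)\le G_\eps(w)$. The only point requiring a moment's care is that the reference state in Lemma~\ref{lem_basic} is arbitrary, so we are free to instantiate it at $u=u_\eps$ itself; once that is observed, the minimality of $u_\eps$ does all the work. One could alternatively phrase the whole argument self-containedly by starting from the Carleman estimate \eqref{carleman} applied to $u_\eps - w$, bounding $\|\Gamma_\omega(u_\eps-w)\|_{L_2(I\times\omega)}$ and $\|B(u_\eps-w)\|_{Y'}$ each via a triangle inequality through $g$ and $f$, and then invoking $G_\eps(u_\eps)\le G_\eps(w)$; but routing through Lemma~\ref{lem_basic} is cleaner.
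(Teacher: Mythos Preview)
Your proof is correct and follows essentially the same approach as the paper: apply Lemma~\ref{lem_basic} with $u=u_\eps$, then use $\sqrt{G_0(w)}\le\sqrt{G_\eps(w)}$ and $e_\cons(u_\eps)\le\sqrt{G_\eps(u_\eps)}\le\sqrt{G_\eps(w)}$ by minimality. The paper compresses exactly these steps into the single chain $\sqrt{G_{0}(w)}+e_\cons(u_\eps)\leq \sqrt{G_{\eps}(w)}+\sqrt{G_{\eps}(u_\eps)} \leq 2\sqrt{G_{\eps}(w)}$.
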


\begin{proof} The proof follows from Lemma~\ref{lem_basic} and
$$
\sqrt{G_{0}(w)}+e_\cons(\new{u_\eps})\leq \sqrt{G_{\eps}(w)}+\sqrt{G_{\eps}(u_\eps)} \leq 2\sqrt{G_{\eps}(w)}.\qedhere
$$
\end{proof}

The same arguments, used to show for $\eps\geq 0$ existence and uniqueness of the minimizer $u_\eps$ of $G_\eps$ over $X$, show for any closed subspace $X^\delta\subset X$
uniqueness of the minimizer $u^\delta_\eps$ of $G_\eps$ over $X^\delta$.
 An \emph{a priori bound} for  $\|u-u_\eps^\delta\|_{X_\eta}$ for an arbitrary reference state $u \in X$ is given in the next proposition.

\begin{proposition}
\label{prop2} It holds that
% \begin{align*}
% \|u-u_\eps\|_{X_\eta} &\lesssim \new{e_\cons(u)}%\wnew{(u)}
% +\eps\|\gamma_0 u\|_{L_2(\Omega)}.
% \end{align*}
%The minimizer $u^\delta_\eps$ of $G_\eps$ over $X^\delta$ satisfies
\begin{align*}
\|u-u_\eps^\delta\|_{X_\eta} &\lesssim  {e_\cons(u)}%\wnew{(u)}
+ {e_{\ap}^\delta(u)} %\wnew{(u)}
+ \eps\|\gamma_0 u\|_{L_2(\Omega)},
\end{align*}
where
$$
 e^\delta_\ap(u)   :=\min_{w \in X^\delta} \|u-w\|_X
$$
 denotes the corresponding \emph{approximation error} of the state $u$.
\end{proposition}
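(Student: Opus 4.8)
The plan is to obtain Proposition~\ref{prop2} as a short consequence of Lemma~\ref{lem_basic} combined with the defining minimality of $u_\eps^\delta$ over $X^\delta$; no tool beyond the Carleman estimate \eqref{carleman} (already packaged into Lemma~\ref{lem_basic}) and the boundedness of $B$, $\Gamma_\omega$, $\gamma_0$ will be needed.

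First I would invoke Lemma~\ref{lem_basic} with $w=u_\eps^\delta\in X^\delta\subset X$, which yields $\|u-u_\eps^\delta\|_{X_\eta}\lesssim \sqrt{G_0(u_\eps^\delta)}+e_\cons(u)$. Because $G_0\le G_\eps$ pointwise on $X$ (the two differ only by the nonnegative term $\eps^2\|\gamma_0\cdot\|_{L_2(\Omega)}^2$) and because $u_\eps^\delta$ minimizes $G_\eps$ over $X^\delta$, one has $G_0(u_\eps^\delta)\le G_\eps(u_\eps^\delta)\le G_\eps(w^\delta)$ for every $w^\delta\in X^\delta$. It therefore suffices to exhibit a single convenient $w^\delta\in X^\delta$ for which $\sqrt{G_\eps(w^\delta)}$ is controlled by the claimed right-hand side, and the natural candidate is a best $X$-approximation of the reference state, i.e.\ some $w^\delta\in X^\delta$ with $\|u-w^\delta\|_X=e^\delta_\ap(u)$ (which exists, $X^\delta$ being a closed subspace of the Hilbert space $X$; a near-minimizer would serve just as well).

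The remaining step is routine bookkeeping. Splitting $Bw^\delta-g=B(w^\delta-u)+(Bu-g)$, $\Gamma_\omega w^\delta-f=\Gamma_\omega(w^\delta-u)+(\Gamma_\omega u-f)$ and $\gamma_0 w^\delta=\gamma_0(w^\delta-u)+\gamma_0 u$, then using a triangle inequality for the norm $\sqrt{\|\cdot\|_{Y'}^2+\|\cdot\|_{L_2(I\times\omega)}^2}$ together with $B\in\cL(X,Y')$, $\Gamma_\omega\in\cL(X,L_2(I\times\omega))$ and $\gamma_0\in\cL(X,L_2(\Omega))$, one gets
$$\sqrt{G_\eps(w^\delta)}\lesssim (1+\eps)\,\|u-w^\delta\|_X+e_\cons(u)+\eps\|\gamma_0 u\|_{L_2(\Omega)}= (1+\eps)\,e^\delta_\ap(u)+e_\cons(u)+\eps\|\gamma_0 u\|_{L_2(\Omega)}.$$
Chaining this with the two inequalities from the previous paragraph gives $\|u-u_\eps^\delta\|_{X_\eta}\lesssim e_\cons(u)+(1+\eps)e^\delta_\ap(u)+\eps\|\gamma_0 u\|_{L_2(\Omega)}$.

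I do not expect a genuinely hard step here; the one point worth a sentence is the superfluous factor $(1+\eps)$ multiplying $e^\delta_\ap(u)$. Since the pertinent regime throughout the paper is $\eps$ small (indeed $\eps\downarrow 0$), restricting $\eps$ to any fixed bounded range — so that the suppressed constant may depend on that bound — absorbs it and returns the stated clean form. As a preliminary I would also make sure that the existence (not merely uniqueness) of the minimizer $u_\eps^\delta$ figuring in the statement is covered by the ``same arguments'' invoked just above the proposition: for $\eps>0$ this is Lax--Milgram on the closed subspace $X^\delta$ via \eqref{equivalence}, while for $\eps=0$ it follows exactly as in Remark~\ref{rem:G0}.
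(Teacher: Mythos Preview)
Your proof is correct and follows essentially the same route as the paper: apply Lemma~\ref{lem_basic} with $w=u_\eps^\delta$, use $G_0\le G_\eps$ and the minimality of $u_\eps^\delta$ to replace $G_\eps(u_\eps^\delta)$ by $G_\eps(w^\delta)$ for a best $X$-approximation $w^\delta$ (the paper writes this as $P_{X^\delta}u$), and then expand by triangle inequalities and the boundedness of $B$, $\Gamma_\omega$, $\gamma_0$. Your explicit treatment of the $(1+\eps)$ factor is in fact more careful than the paper, which silently absorbs the term $\eps\|\gamma_0(u-P_{X^\delta}u)\|_{L_2(\Omega)}\lesssim \eps\,e^\delta_\ap(u)$ into $e^\delta_\ap(u)$.
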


\begin{proof} %\wnew{For $(g,f)= B_\omega u$, an}
Let $P_{X^\delta}$ denote the $X$-orthogonal projector onto $X^\delta$, then using
$B \in \cL(X,Y')$ and \eqref{embedding}, we infer that
\begin{align*}
 {\sqrt{G_0(u^\delta_\eps)} \leq} &\sqrt{G_\eps(u^\delta_\eps)}  \leq  \sqrt{G_\eps(P_{X^\delta} u)} \\
  {\leq} & \|B(u-  P_{X^\delta} u)\|_{Y'} +
\|Bu-g\|_{Y'}+\|f- \Gamma_\omega u\|_{L_2(I\times \omega)}+
\\
& \|\Gamma_\omega(u- P_{X^\delta} u)\|_{L_2(I\times \omega)} + \eps\|\gamma_0 (u- P_{X^\delta} u)\|_{L_2(\Omega)} + \eps\|\gamma_0 u\|_{L_2(\Omega)}\\
 \lesssim & {e_{\ap}^\delta(u)} + {e_\cons(u)}   + \eps \|\gamma_0 u\|_{L_2(\Omega)},
\end{align*}
which together with Lemma~\ref{lem_basic} completes the proof.
%With $P_{X^\delta}$ denoting the $X$-orthogonal projector onto $X^\delta$, we have
%\begin{align*}
%& \sqrt{G_0(u^\delta_\eps)} \leq \sqrt{G_\eps(u^\delta_\eps)} \leq  \sqrt{G_\eps(P_{X^\delta} u)} \leq\\
% &\sqrt{G_\eps(u)}+\sqrt{\|B(u-  P_{X^\delta} u)\|_{Y'}^2+\|f- \Gamma_\omega u\|_{L_2(I\times \omega)}^2+ \eps^2\|\gamma_0 (u- P_{X^\delta} u)\|_{L_2(\Omega)}^2}
%\end{align*}
%by an application of the triangle-inequality. Furthermore
%$$
%\sqrt{G_\eps(u)} \leq e_\cons(u) +\eps \|\gamma_0 u\|_{L_2(\Omega)},
%$$
%and from $B \in \cL(X,Y')$ and \eqref{embedding} one infers
%$$
%\sqrt{\|B(u-  P_{X^\delta} u)\|_{Y'}^2+\|f- \Gamma_\omega u\|_{L_2(I\times \omega)}^2+ \eps^2\|\gamma_0 (u- P_{X^\delta} u)\|_{L_2(\Omega)}^2}
%\lesssim e_{\ap}^\delta(u).\qedhere
%$$
\end{proof}

 At this point we note that because of the presence of the dual norm $\|\cdot\|_{Y'}$ in $G_\eps$, neither $u_\eps^\delta$ nor the a posteriori bound for $\|u_\eps-w\|_{X_\eta}$ from Proposition~\ref{prop1} for e.g.~$w=u_\eps^\delta$ can be computed. Both problems are going to be tackled in the next two subsections.

%\begin{remark}
%\label{rem:eps0}
%Also for $\eps=0$, the minimizers $u_0^\delta$ and $u_0$ exist uniquely (the latter being equal to $\argmin_{u \in X} e_\cons(u)$, cf. \eqref{u0})

%Indeed, suppose there are two minimizers of $G_0$ over $X^\delta$ (the argument for minimizing over $X$ is the same
%\new{\sout{and existence has been already argued earlier}} \rs{this was only done for consistent data}). Then,
%by \eqref{EL},  their difference $e_0^\delta$ satisfies
%$$
%\langle B e_0^\delta,B w\rangle_{Y'}+
%\langle \Gamma_\omega e_0^\delta,\Gamma_\omega w\rangle_{L_2(I \times \omega)}=0 \quad(w \in X^\delta),
%$$
%and so $\|e_0^\delta\|^2_{L_2(I \times \omega)}+\|Be_0^\delta\|^2_{Y'}=0$. As we have seen, using \eqref{embedding} and \eqref{carleman} this implies $e_0^\delta=0$.
%\end{remark}

\begin{remark}
\label{rem:stab}  {Although the upper bound from Proposition~\ref{prop2} is minimal for $\eps=0$,} a reason for nevertheless taking $\eps>0$, say of the order of the expected magnitude of  $ {e_\cons(u) +e_{\ap}^\delta(u)}$,
is to enhance the numerical stability of solving the Euler-Lagrange equations.
%\rs{Maybe we will find another reason to do so.}
%\wnew{(WD: if one expects a significant model error, a very small $\eps$ would push the ``candidate state'' into a wrong direction. But this cannot be quantified without knowing any model error.)}
%As for targeting $\eps$ to include $e_{\ap}^\delta(u)$, this latter quantity
%can be changed by us, $e_\cons(u_0)$ cannot. So, if one has great faith in the model, an ``ideal'' choice of $\eps$ would be $e_\cons(u_0)\ge c \eps \|\gamma_0 u_0\|_{L_2(\Omega)}$ for some $c\le 1$.
%  Maybe we can infer something  from
%monitoring both computable quantities $\|\gamma_0 u^{\delta,\delta}_\eps\|_{L_2(\Omega)}$ and $\sqrt{G^\delta_\eps(u^{\delta,\delta}_\eps)}$,
%see the remarks at the end of the previous section.)}
\end{remark}

\new{\begin{remark} Notice that even when $e_\cons(u)=0$ and $\eps=0$, Proposition~\ref{prop2} does not show that $u_0^\delta$ is a quasi-best approximation to $u$ from $X^\delta$. Indeed the norm $\|\cdot\|_X$ used to define $e^\delta_\ap(u)$ differs from the norm $\|\cdot\|_{X_\eta}$ in which $u-u_0^\delta$ is measured.
\end{remark}}

We conclude this section with a few comments on the behavior of $u_\eps$ when $\eps$ tends to zero.  First, note that the consistency error of $u_\eps$ approaches the minimal consistency error  {$e_\cons(u_0)$}
when $\eps \to 0$ because
$$
e_\cons(u_\eps) \le \sqrt{G_\eps(u_\eps)} \le \sqrt{G_\eps({u_0})} \, {\leq} \, {e_\cons(u_0)} + \eps \|\gamma_0  {u_0}\|_{L_2(\Omega)}.
$$
In particular, a first trivial consequence of Proposition~\ref{prop2} is that, for consistent and exact data, i.e., $ {e_\cons(u_0)} =0$,  $u_\eps$ tends to the state $ {u_0}$ \new{in $X_\eta$ for any $\eta>0$. Even without the assumption $e_\cons(u_0)=0$, a stronger result is derived in the following remark.}

%\rs{In what sense? In $X_\eta$ for any $\eta>0$. But that doesn't give the result of the following remark. What to do?}
%It is a simple consequence of $\Gamma$-convergence that this remains true in general.
%\mnote{This is from the top of my head, need
%to check this \rs{Please do. I'm not acquainted with this concept}, if we decide to keep it.}
\begin{remark}
\label{rem:Gamma}
One has
$$
\| {u_0} - u_\eps\|_{X}\to 0,\quad \eps \to 0.
$$
\end{remark}
\begin{proof}
We remark first that $\sqrt{{G_\eps}}$ $\,\Gamma$-converges to $\sqrt{{G_0}} =:F$. In fact,
%\mnote{I need to check, because this gives convergence in $X$ not just $X_\eta$, but there is no convergence order. \rs{We cannot expect any %order, do we?}}
let $(\eps_n)_{n\in \N}$ tend to zero. The functionals $F_n:= \sqrt{{G_{\eps_n}}}: X\to \R_+$ are uniformly coercive (in the sense of optimization,
meaning that $F_n(w)\to \infty$ for $\|w\|_X\to \infty$). Let $(w_n)_{n\in\N}$ be any sequence in $X$ with limit $w\in X$. Then
\begin{align*}
F(w)-F_{{n}}(w_n) &\le F(w)-F(w_n) {\leq \sqrt{\|B(w-w_n)\|_{Y'}^2+\|\gamma_\omega(w-w_n)\|_{L_2(I \times\omega)}^2}}\\
& \lesssim \|w-w_n\|_X,\quad n\in \N,
\end{align*}
so that $F(w) \le \liminf_{n\to\infty}F_n(w_n)$. %The same reasoning \rs{Yes? I would take $w_n \equiv w$} shows that
Moreover, for any $w\in X$ there exists a sequence
$(w_n)_{n\in\N}$ in $X$ such that $F(w) \ge \limsup_{n\to\infty} F_n(w_n)$, as can be seen by simply taking $w_n=w$. Thus, by the main Theorem of $\Gamma$-convergence,
minimizers of $F_n$ converge to the minimizer of $F$.
\end{proof}

Thus, trying to solve the  regularized problem, with $\eps$ as small as possible  incidentally favors $ {u_0}$ as a target state.
Thus, it is of interest to estimate $e_\cons(u_0)$ (see  {Corollary~\ref{corol}} later below)
since a relatively large
$e_\cons(u_0)$ weakens the relevance of $u_0$, favoring correspondingly larger regularization parameters.

%\newpage

\subsection{Discretizing the dual norm}\label{ssec:dualnorm}

Minimizing $G_\eps$ over $X^\delta$ does not correspond to a practical method because the dual norm $\|\cdot\|_{Y'}$ cannot be evaluated.
Therefore, given a family of finite dimensional subspaces $(X^\delta)_{\delta \in \Delta}$ of $X$, the idea is to find a family
$(Y^\delta)_{\delta \in \Delta}$  of finite dimensional subspaces of $Y$, ideally with $\dim Y^\delta \lesssim \dim X^\delta$,
such that $\|B w\|_{Y'}$ can be controlled for $w\in X^\delta$ by  the computable quantity $\|B w\|_{{Y^\delta}'}$.
This is ensured whenever
\be \label{inf-sup}
\inf_{\delta \in \Delta} \inf_{ {\{w \in X^\delta\colon B w \neq 0\}}} \ \sup_{0 \neq \mu \in Y^\delta} \frac{(B w)(\mu)}{\|B w\|_{Y'} \|\mu\|_Y}>0
\ee
is valid.

In the subsequent discussion we make heavy use of
the Riesz isometry $R\in \Lis(Y,Y')$,  defined by
$$
(Rv)(w):=\langle v,w\rangle_Y:=\int_I \int_\Omega \nabla_{\bf x} v \cdot \nabla_{\bf x} \new{w} \,d{\bf x}\, dt, \quad(v,w \in Y). %$u^{\delta,\delta}_\eps
$$
Introducing  auxiliary variables for $\mu_\eps =R^{-1}(g-B u_\eps )\in Y$,
 $\theta_\eps =   f- \Gamma_\omega u_\eps \in L_2(I\times \omega)$, and
$\nu_\eps =  {-}\gamma_0 u_\eps \in L_2(\Omega)$ gives rise to a mixed formulation {of the problem of finding the minimizer $u_\eps$ over $X$ of $G_\eps$ defined in} \eqref{Geps}  in terms of
 the saddle point system
\be \label{saddle}
S_\eps (\mu_\eps, \theta_\eps,\nu_\eps,u_\eps) :=
\left[
\begin{array}{@{}cccc@{}} R & 0 & 0 & B\\
0 & I & 0 & \Gamma_\omega\\
0 & 0 & I & \eps \gamma_0\\
B' & \Gamma_\omega' & \eps \gamma_0' & 0
\end{array}
\right]
\left[
\begin{array}{@{}c@{}}
\mu_\eps \\ \theta_\eps \\ \nu_\eps \\ u_\eps
\end{array}
\right]
=
\left[
\begin{array}{@{}c@{}}
g \\ f \\ 0 \\0
\end{array}
\right].
\ee
(see  \cite[Sect.~2.2]{45.44}).
(Equivalently,  \eqref{saddle} characterizes the critical point
of the   Lagrangian obtained when inserting in $G_\eps$ these variables and appending corresponding constraints by
Lagrange multipliers.)

%The minimizer $u_\eps$ of $G_\eps$ equals then the fourth component of the solution $(\mu_\eps,\theta_\eps,\nu_\eps,u_\eps) \in Y \times L_2(I \times \omega) \times L_2(\Omega) \times X$ of \eqref{saddle}.
%

\begin{remark}
\label{rem:condensed}
Eliminating the second and third variable from \eqref{saddle}, one arrives at the equivalent more compact  formulation
%coercive formulation
$$
%\cB { \mu_\eps^{\delta,\delta}\choose u_\eps^{\delta,\delta}} :=
\left[
\begin{array}{@{}cc@{}} R & B \\ B' & -(\Gamma_\omega' \Gamma_\omega+\eps^2\gamma_0'\gamma_0)
\end{array}
\right]
\left[
\begin{array}{@{}c@{}}  {\mu_\eps} \\  {u_\eps}
\end{array}
\right]
=
\left[
\begin{array}{@{}c@{}} g\\ -\Gamma_\omega' f,
\end{array}
\right]
$$
%where $\Big\langle { \mu_\eps^{\delta,\delta}\choose u_\eps^{\delta,\delta}}, \cB { \mu_\eps^{\delta,\delta}\choose u_\eps^{\delta,\delta}} \Big\rangle \ge \|\mu_\eps^{\delta,\delta}\|_Y^2 + \|\Gamma_\omega u_\eps^{\delta,\delta}\|^2_{L_2(I\times \omega)} + \eps^2\|\gamma_0  u_\eps^{\delta,\delta}\|_{L_2(\Omega)}^2$.
It serves in Section \ref{Sprac1} as the starting point for a numerical scheme.
\end{remark}

\begin{theorem} \label{thm1} Let \eqref{inf-sup} be valid. For $u_\eps^{\delta,\delta}$ denoting the (unique) minimizer over $X^\delta$ of
$$
G^\delta_\eps := w \mapsto \|Bw-g\|_{{Y^\delta}'}^2+ \|\Gamma_\omega w-f\|_{L_2(I \times \omega)}^2+\eps^2\|\gamma_0 w\|_{L_2(\Omega)}^2,
$$
%which for $\eps>0$ exists uniquely by the Lax-Milgram lemma,
%it holds that
%\rs{$\frac12$ removed}
one has
$$
\|u-u^{\delta,\delta}_\eps\|_{X_\eta} \lesssim  {e_\cons(u) + e^\delta_\ap(u)} +\eps\|\gamma_0 u\|_{L_2(\Omega)}.
$$
\new{(We recall that, as always, the constant absorbed by the $\lesssim$-symbol may (actually will) depend on $\omega$ and $\eta$, but \emph{not} on $\eps \geq 0$ or $\delta \in \Delta$.)}
%\rsnew{You included such a comment in the corresponding theorem for the first order formulation, which I moved to here}
\end{theorem}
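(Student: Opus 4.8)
This statement is the fully discrete counterpart of Proposition~\ref{prop2}: the same three quantities appear on the right-hand side, and the only new feature is that $u_\eps^{\delta,\delta}$ minimizes the functional $G_\eps^\delta$, in which the dual norm $\|\cdot\|_{Y'}$ has been replaced by the computable $\|\cdot\|_{{Y^\delta}'}$. My plan is therefore to follow the proof of Proposition~\ref{prop2} essentially line by line, inserting one extra step that uses the uniform discrete inf-sup stability \eqref{inf-sup} to pass between the two dual norms on the trial space.

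The first observation is that on $X^\delta$ the two dual norms are equivalent: for any $\phi \in Y'$ one trivially has $\|\phi\|_{{Y^\delta}'} \le \|\phi\|_{Y'}$ because $Y^\delta \subset Y$, whereas \eqref{inf-sup} says precisely that $\|Bw\|_{{Y^\delta}'} \gtrsim \|Bw\|_{Y'}$ for $w \in X^\delta$, with a constant uniform over $\delta \in \Delta$; hence $\|Bw\|_{Y'} \eqsim \|Bw\|_{{Y^\delta}'}$ for all $w \in X^\delta$. Next, applying Lemma~\ref{lem_basic} with the (arbitrary) reference state $u$ and $w := u_\eps^{\delta,\delta} \in X^\delta$ reduces the claim to estimating $\sqrt{G_0(u_\eps^{\delta,\delta})}$. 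Since $G_0(u_\eps^{\delta,\delta}) = \|B u_\eps^{\delta,\delta} - g\|_{Y'}^2 + \|\Gamma_\omega u_\eps^{\delta,\delta} - f\|_{L_2(I\times\omega)}^2$ and the second term is at most $G_\eps^\delta(u_\eps^{\delta,\delta})$, the only genuine work is to control $\|B u_\eps^{\delta,\delta} - g\|_{Y'}$, which still carries the \emph{true} dual norm.

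To that end, let $P_{X^\delta}$ be the $X$-orthogonal projector onto $X^\delta$ and split
$$
B u_\eps^{\delta,\delta} - g \;=\; B\bigl(u_\eps^{\delta,\delta} - P_{X^\delta} u\bigr) \;+\; \bigl(B P_{X^\delta} u - g\bigr).
$$
The second summand is handled directly: $\|B P_{X^\delta} u - g\|_{Y'} \le \|B(P_{X^\delta} u - u)\|_{Y'} + \|Bu - g\|_{Y'} \lesssim e^\delta_\ap(u) + e_\cons(u)$, using $B \in \cL(X,Y')$. For the first summand, $u_\eps^{\delta,\delta} - P_{X^\delta} u \in X^\delta$, so the norm equivalence above applies and
$$
\|B(u_\eps^{\delta,\delta} - P_{X^\delta} u)\|_{{Y^\delta}'} \le \|B u_\eps^{\delta,\delta} - g\|_{{Y^\delta}'} + \|g - B P_{X^\delta} u\|_{{Y^\delta}'};
$$
here the first term on the right is $\le \sqrt{G_\eps^\delta(u_\eps^{\delta,\delta})} \le \sqrt{G_\eps^\delta(P_{X^\delta} u)}$ by minimality of $u_\eps^{\delta,\delta}$ over $X^\delta$, while the second is $\le \|g - B P_{X^\delta} u\|_{Y'} \lesssim e^\delta_\ap(u) + e_\cons(u)$ by $\|\cdot\|_{{Y^\delta}'} \le \|\cdot\|_{Y'}$ and the bound just obtained. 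It remains to bound $\sqrt{G_\eps^\delta(P_{X^\delta} u)}$, which is done exactly as in the proof of Proposition~\ref{prop2}: estimate each of its three terms by a triangle inequality through $u$, using $\|\cdot\|_{{Y^\delta}'} \le \|\cdot\|_{Y'}$, $B \in \cL(X,Y')$, $\Gamma_\omega \in \cL(X, L_2(I\times\omega))$, and $\gamma_0 \in \cL(X, L_2(\Omega))$ (the latter from \eqref{embedding}), which yields $\sqrt{G_\eps^\delta(P_{X^\delta} u)} \lesssim e_\cons(u) + e^\delta_\ap(u) + \eps\|\gamma_0 u\|_{L_2(\Omega)}$ (the stray term $\eps\, e^\delta_\ap(u)$ being absorbed as there). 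Collecting both summands gives $\sqrt{G_0(u_\eps^{\delta,\delta})} \lesssim e_\cons(u) + e^\delta_\ap(u) + \eps\|\gamma_0 u\|_{L_2(\Omega)}$, and plugging this into Lemma~\ref{lem_basic} proves the theorem.

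The one point that goes beyond Proposition~\ref{prop2} — and the only place where the hypothesis \eqref{inf-sup} enters — is the replacement of $\|\cdot\|_{Y'}$ by $\|\cdot\|_{{Y^\delta}'}$; the insertion of $P_{X^\delta} u$ is exactly what arranges the relevant quantity to be a difference of two elements of $X^\delta$, so that the \emph{uniform} discrete inf-sup stability can be invoked (which is also why the constant stays independent of $\delta$). I expect this to be the only real obstacle; the remainder is the bookkeeping already carried out for Proposition~\ref{prop2}.
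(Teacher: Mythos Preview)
Your argument is correct. It is, however, genuinely different from the paper's proof. The paper recasts the minimization of $G_\eps$ as the saddle-point system \eqref{saddle}, equips $X$ with the $\eps$-dependent energy norm $\nrm\cdot\nrm_\eps$, and shows that the Galerkin discretization on $Y^\delta\times L_2(I\times\omega)\times L_2(\Omega)\times X^\delta$ satisfies the LBB condition thanks to \eqref{inf-sup}. This yields the quasi-optimality estimate \eqref{40}, after which the argument passes through the continuous minimizer $u_\eps$: the Carleman estimate gives $\|u-u_\eps^{\delta,\delta}\|_{X_\eta}\lesssim \nrm u-u_\eps\nrm_\eps+\nrm u_\eps-u_\eps^{\delta,\delta}\nrm_\eps$, and both pieces are controlled via $\sqrt{G_\eps(u)}$. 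Your route, by contrast, bypasses both the mixed formulation and the intermediate $u_\eps$ entirely: you apply Lemma~\ref{lem_basic} directly to $w=u_\eps^{\delta,\delta}$, and the only non-trivial step is the insertion of $P_{X^\delta}u$ so that the inf-sup condition can convert $\|B\cdot\|_{Y'}$ into $\|B\cdot\|_{{Y^\delta}'}$ on a difference lying in $X^\delta$. What your approach buys is brevity and conceptual economy for the theorem as stated; what the paper's approach buys is the stronger intermediate estimate \eqref{40} on all four components $(\mu_\eps,\theta_\eps,\nu_\eps,u_\eps)$ in the energy norm, which is reused in Remark~\ref{rem:Gamma2} and underlies the later discussion of the iterative solver.
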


\begin{proof}
 {Denoting the block-diagonal operator comprized of the leading $3\times 3$ block in $S_\eps$ by $D$, the operator $S_\eps$ can be rewritten as
$$
S_\eps = \left[
\begin{array}{@{}cc@{}}
D & C_\eps\\
C_\eps' & 0
\end{array}
\right],
$$
where $C_\eps \in \cL\big(X,Y' \times L_2(I\times \omega)\times L_2(\Omega)\big)$ is defined by
$$
(C_\eps w)(\mu,\theta,\nu) := (B w)(\mu) + \langle \Gamma_\omega w,\theta\rangle_{L_2(I\times \omega)} + \eps \langle
\gamma_0 w, \nu\rangle_{L_2(\Omega)}.
$$
With the usual identification of $L_2(I\times \omega)$ and $L_2(\Omega)$ with their duals, $D$ is just the isometric Riesz isomorphism between $Y \times L_2(I\times \omega)\times L_2(\Omega)$ and its dual.
Equipping $X$ with the  ($\eps$-dependent)  ``energy''-norm
$$
\nrm w \nrm_{\eps}:=\sqrt{\|B w\|_{Y'}^2 +\|\Gamma_\omega w\|_{L_2(I \times \omega)}^2+\eps^2\|\gamma_0 w\|_{L_2(\Omega)}^2},
$$
one verifies that $\|\cC_\eps w\|_{Y' \times L_2(I\times \omega)\times L_2(\Omega)} = \nrm w \nrm_{\eps}$, so that in particular $C_\eps$ satisfies an `inf-sup' condition.}
Consequently, the operator $S_\eps$ on the left hand side of \eqref{saddle} is a boundedly invertible mapping from $Y \times L_2(I \times \omega) \times L_2(\Omega) \times (X,\nrm\,\nrm_\eps)$ to its dual (uniformly in $\eps$).

Analogously  {to the continuous case}, the minimizer $u^{\delta,\delta}_\eps$ of $G^\delta_\eps$ equals the fourth component of the solution $(\mu^{\delta,\delta}_\eps,\theta^{\delta,\delta}_\eps,\nu^{\delta,\delta}_\eps,u^{\delta,\delta}_\eps)$ of the Galerkin discretization of \eqref{saddle} with trial space $Y^\delta \times L_2(I \times \omega) \times L_2(\Omega) \times X^{\delta}$.
Thanks to \eqref{inf-sup}, for $w \in X^\delta$ we have
$$
\sup_{0 \neq (\tilde{\mu},\tilde{\theta},\tilde{\nu}) \in Y^\delta \times L_2(I \times \omega) \times L_2(\Omega)} \frac{(B w)(\tilde{\mu})+\langle \Gamma_\omega w,\tilde{\theta}\rangle_{L_2(I \times \omega)}+\eps\langle \gamma_0 w,\tilde \nu\rangle_{L_2(\Omega)}}{\sqrt{\|\tilde \mu\|_{Y}^2 +\|\tilde \theta\|_{L_2(I \times \omega)}^2+\|\tilde \nu\|_{L_2(\Omega)}^2}} \eqsim\nrm w\nrm_\eps,
$$
%so that, by Brezzi's Theorem, the Galerkin discretisation is uniformly stable,
%(for more details on specific aspects of the present special case  see e.g.  \cite[Thm.~3.1]{249.99}),
%and consequently
 {so that the so-called Ladyzhenskaya-Babu\v{s}ka-Brezzi condition is satisfied.
Consequently, the discretization of the saddle-point system is uniformly stable, and so we have}
\be \label{40}
\begin{split}
 \|\mu_\eps-&\mu^{\delta,\delta}_\eps\|_Y+\|\theta_\eps-\theta^{\delta,\delta}_\eps\|_{L_2(I \times \omega) }+\|\nu_\eps-\nu^{\delta,\delta}_\eps\|_{L_2(\Omega)} +\nrm u_\eps-u^{\delta,\delta}_\eps\nrm_\eps \\
&\lesssim \min_{(\tilde{\mu},\tilde{u}) \in Y^\delta \times X^\delta} \|\mu_\eps-\tilde \mu\|_Y+\nrm u_\eps-\tilde{u} \nrm_\eps
\\
& \leq  \|\mu_\eps\|_Y+\min_{\tilde{u} \in  X^\delta} \nrm u_\eps-\tilde{u} \nrm_\eps
 =  \|g - B u_\eps\|_{Y'}+\min_{\tilde{u} \in  X^\delta} \nrm u_\eps-\tilde{u} \nrm_\eps.
\end{split}
\ee

From \eqref{carleman}, we have
$$
\|u-u_\eps^{\delta,\delta}\|_{X_\eta}\lesssim
\nrm u-u_\eps^{\delta,\delta}\nrm_{\eps}
\leq
\nrm u-u_\eps\nrm_{\eps}
+
\nrm u_\eps-u_\eps^{\delta,\delta}\nrm_{\eps},
$$
where, by \eqref{40},
\begin{align*}
\nrm u_\eps-u_\eps^{\delta,\delta}\nrm_{\eps}
& \lesssim \|g - B u_\eps\|_{Y'}+\nrm u-u_\eps \nrm_\eps +\min_{\tilde{u} \in  X^\delta} \nrm u-\tilde u \nrm_\eps,\\
& \lesssim \|g - B u_\eps\|_{Y'}+\nrm u-u_\eps \nrm_\eps+ {e^\delta_\ap(u)}  ,
\end{align*}
where we have used $\nrm\,\nrm_{\eps} \lesssim \|\,\|_X$.
By applying a triangle-inequality for the norm $\sqrt{\|\cdot\|_{Y'}^2+\|\cdot\|_{L_2(I \times \omega)}^2+\eps^2 \|\cdot\|_{L_2(\Omega)}^2}$, one infers that
\begin{align*}
\|g - B u_\eps\|_{Y'}+\nrm u-u_\eps \nrm_\eps & \leq \sqrt{G_0(u_\eps)}+\sqrt{G_\eps(u)}+\sqrt{G_\eps(u_\eps)}  \leq 3 \sqrt{G_\eps(u)} \\
&\leq 3 (e_\cons(u) +\eps\|\gamma_0 u\|_{L_2(\Omega)}).
\end{align*}
By combining the estimates from the last three displayed formulas the proof is complete.
\end{proof}

 {\begin{remark} \label{rem:Gamma2}
Let $(X^\delta)_{\delta \in \Delta}=(X^{\delta_n})_{n \in \N}$ be such that $\overline{\cup_n X^{\delta_n}}=X$ and $X^{\delta_n} \subset X^{\delta_{n+1}}$ ($\forall n$). Let $(Y^{\delta_n})_{n \in \N}$ be a corresponding sequence such that \eqref{inf-sup} is valid, $\overline{\cup_n Y^{\delta_n}}=Y$ and $Y^{\delta_n} \subset Y^{\delta_{n+1}}$ ($\forall n$),
and let $(\eps_n)_{n \in \N}$ be such that $\lim_{n \rightarrow \infty} \eps_n=0$.
Then
$$
\lim_{n \rightarrow \infty} G_0(u_{\eps_n}^{\delta_n,\delta_n})=\lim_{n \rightarrow \infty} G_{\eps_n}(u_{\eps_n}^{\delta_n,\delta_n})=G_0(u_0)=e_\cons(u_0).
$$
\end{remark}

\begin{proof}
For convenience writing $(\delta,\eps)=(\delta_n,\eps_n)$, for $\xi \in \{0,\eps\}$ we write
$$
G_0(u_0)-G_{\xi}(u_{\eps}^{\delta,\delta})=G_0(u_0)-G_{\xi}(u_{\eps})+G_{\xi}(u_{\eps})-G_{\xi}(u_{\eps}^{\delta,\delta}).
$$
Since  $\lim_{n \rightarrow \infty} \|u_0-u_{\eps}\|_X =0$  {by Remark~\ref{rem:Gamma}}, and hence $\lim_{n \rightarrow \infty} \|\mu_0-\mu_{\eps}\|_Y =
\lim_{n \rightarrow \infty} \|B (u_0-u_{\eps})\|_{Y'}=0$,  we have $\lim_{n \rightarrow \infty} G_{\xi}(u_{\eps})=G_0(u_0)$.
 As shown in \eqref{40}, it holds that
\begin{align*}
|G_{\xi}&(u_{\eps})-G_{\xi}(u_{\eps}^{\delta,\delta})| \leq \nrm u_{\eps}-u_{\eps}^{\delta,\delta}\nrm_\eps
\lesssim \min_{(\tilde{\mu},\tilde{u}) \in Y^\delta\times X^\delta} \|\mu_\eps -\tilde{\mu}\|_Y+\nrm u_\eps-\tilde{u}\nrm_\eps
\\
&\lesssim \min_{(\tilde{\mu},\tilde{u}) \in Y^\delta\times X^\delta} \|\mu_\eps -\tilde{\mu}\|_Y+\|u_\eps-\tilde{u}\|_X
\\ & \leq  \|\mu_0-\mu_\eps \|_Y+\|u_0-u_\eps\|_X+
\min_{(\tilde{\mu},\tilde{u}) \in Y^\delta\times X^\delta} \|\mu_0 -\tilde{\mu}\|_Y+\|u_0-\tilde{u}\|_X  \rightarrow 0
\end{align*}
for $n \rightarrow \infty$.
\end{proof}}

The above results hinge on the validity of \eqref{inf-sup}. When $A(t) \equiv A$ is a spatial (second order elliptic) differential operator with \emph{constant coefficients} on a \emph{convex} polytopal domain $\Omega$, and $X^\delta$ is a \emph{lowest order} finite element space w.r.t.~\emph{quasi-uniform} prismatic elements, we will be able to verify
in \S\ref{Sstandard} the inf-sup condition \eqref{inf-sup}.

Since we are able to show \eqref{inf-sup} only under such restrictive conditions on $\Omega$ and the trial spaces $X^\delta$,
 we will consider in Sect.~\ref{SFOSLS} a \emph{First Order System Least Squares} formulation of the data assimilation problem, for which a corresponding inf-sup condition will be shown in more general situations in \S\ref{SFOSLS2}.

Stability of the discretization, and hence  \eqref{inf-sup}, is in particular intimately
 connected with  {\em a posteriori accuracy control}. %\rs{Yes? In what sense?}
   A well-known tool for establishing \eqref{inf-sup} is
 the identification of suitable Fortin operators which also serve to define appropriate notions of {\em data oscillation} as
 discussed next.

%\newpage

\subsection{Fortin operators, a posteriori error estimation and data-oscillation} \label{Sapost}
It is well-known that existence of uniformly bounded Fortin interpolators is a \emph{sufficient} condition for the inf-sup condition \eqref{inf-sup} to hold.
 {In the next theorem it is shown that existence of such interpolators is also a necessary condition, and \new{quantitative} statements are provided.
\begin{theorem} Let
\be \label{fortin}
Q^\delta \in \cL(Y,Y) \text{ with } \ran Q^\delta \subset Y^\delta \text{ and } (B X^\delta)\big((\identity - Q^\delta)Y\big)=0.
\ee
Then $\gamma^\delta := \inf_{\{w \in X^\delta\colon B w \neq 0\}} \sup_{0 \neq \mu \in Y^\delta} \frac{(B w)(\mu)}{\|B w\|_{Y'} \|\mu\|_Y} \geq \|Q^\delta\|_{\cL(Y,Y)}^{-1}$.

Conversely, when $\gamma^\delta>0$, then there exists a $Q^\delta$ as in \eqref{fortin}, which is a projector, and
$\|Q^\delta\|_{\cL(Y,Y)} \new{\leq 2+}1/\gamma^\delta$.
\end{theorem}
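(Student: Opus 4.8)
The plan is to prove the two directions separately. For the first (easy) direction, suppose $Q^\delta$ is as in \eqref{fortin}. Given $w\in X^\delta$ with $Bw\neq 0$, pick $\mu\in Y$ nearly realizing the dual norm $\|Bw\|_{Y'}$, i.e.\ $(Bw)(\mu)\geq (1-\epsilon)\|Bw\|_{Y'}\|\mu\|_Y$. The defining property of the Fortin operator gives $(Bw)(Q^\delta\mu)=(Bw)(\mu)$, since $(Bw)\big((\identity-Q^\delta)\mu\big)=0$. Since $Q^\delta\mu\in Y^\delta$ and $\|Q^\delta\mu\|_Y\leq\|Q^\delta\|_{\cL(Y,Y)}\|\mu\|_Y$, we get
$$
\sup_{0\neq\mu\in Y^\delta}\frac{(Bw)(\mu)}{\|Bw\|_{Y'}\|\mu\|_Y}\geq \frac{(Bw)(Q^\delta\mu)}{\|Bw\|_{Y'}\|Q^\delta\mu\|_Y}\geq\frac{(1-\epsilon)}{\|Q^\delta\|_{\cL(Y,Y)}}.
$$
Letting $\epsilon\downarrow 0$ and taking the infimum over $w$ yields $\gamma^\delta\geq\|Q^\delta\|_{\cL(Y,Y)}^{-1}$.

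For the converse, assume $\gamma^\delta>0$. The idea is to build $Q^\delta$ as a composition: project onto $BX^\delta$ in a suitable sense, then lift back into $Y^\delta$. Concretely, set $Z^\delta:=BX^\delta\subset Y'$, a finite-dimensional subspace, and let $N^\delta:=\{\mu\in Y^\delta: (Bw)(\mu)=0\ \forall w\in X^\delta\}$ be the annihilator of $Z^\delta$ inside $Y^\delta$. The inf-sup condition $\gamma^\delta>0$ says precisely that the bilinear pairing $Z^\delta\times (Y^\delta/N^\delta)\to\R$ is nondegenerate with inf-sup constant $\gamma^\delta$; hence for each $z\in Z^\delta$ there is a unique $\sigma z\in (N^\delta)^{\perp_Y}\cap Y^\delta$ (the $Y$-orthogonal complement of $N^\delta$ within $Y^\delta$) with $z(\mu)=\langle\sigma z,\mu\rangle_Y$ for all $\mu\in Y^\delta$, i.e.\ $\sigma z$ is the Riesz representative of $z|_{Y^\delta}$ restricted to $(N^\delta)^{\perp_Y}$, and one checks $\|\sigma z\|_Y\leq\gamma^{\delta^{-1}}\|z\|_{Y'}$ using the inf-sup bound. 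Now define $Q^\delta:=\sigma\circ J\circ\pi$, where $\pi:Y\to Y'$ is the canonical map $\mu\mapsto\langle\mu,\cdot\rangle_Y$... but this must be arranged so that $(Bw)(Q^\delta\mu)=(Bw)(\mu)$. The cleanest route: for $\mu\in Y$, the functional $z\mapsto z(\mu)$ on $Z^\delta$ has a Riesz representative $r\mu\in Z^\delta$ w.r.t.\ some inner product on $Z^\delta$; set $Q^\delta\mu:=\sigma(r\mu)$ and verify directly that $(Bw)(Q^\delta\mu)=(Bw)(\mu)$ for all $w\in X^\delta$, that $\ran Q^\delta\subset Y^\delta$, and that $Q^\delta$ restricted to $Y^\delta$ acts as identity modulo $N^\delta$ so that $Q^\delta$ is idempotent.

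The norm bound is where the constant $2+1/\gamma^\delta$ emerges, and this is the step I expect to be the main obstacle — keeping careful track of which orthogonality is used and avoiding an extra factor. The natural approach: write $Q^\delta=Q^\delta P^\delta+Q^\delta(\identity-P^\delta)$ where $P^\delta$ is the $Y$-orthogonal projector onto $Y^\delta$. On $(\identity-P^\delta)Y$ one uses that $(Bw)(\mu)$ depends only on... no: one instead bounds $\|Q^\delta\mu\|_Y$ directly by $\gamma^{\delta^{-1}}\|Bw^*\|_{Y'}$ where $w^*$ is dual-extremal, and estimates $\|B(\cdot)\|_{Y'}$ on $X^\delta$ by $\|\mu\|_Y$ via $(Bw)(\mu)=(Bw)(Q^\delta\mu)\leq\|Bw\|_{Y'}\|Q^\delta\mu\|_Y$ — circular unless one first splits off $P^\delta\mu$. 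The honest accounting: for $\mu\in Y$, $\|Q^\delta\mu\|_Y=\|Q^\delta\mu\|_Y$ and $(Bw)(Q^\delta\mu)=(Bw)(\mu)$; choosing $w\in X^\delta$ with $Bw\neq0$ realizing $\|Q^\delta\mu\|_Y\leq\gamma^{\delta^{-1}}\sup_w\frac{(Bw)(Q^\delta\mu)}{\|Bw\|_{Y'}}=\gamma^{\delta^{-1}}\sup_w\frac{(Bw)(\mu)}{\|Bw\|_{Y'}}$ (since $Q^\delta\mu\in(N^\delta)^{\perp_Y}$, the sup is attained and equals $\gamma^{\delta^{-1}}\|Q^\delta\mu\|_Y$... giving $1\leq\gamma^{\delta^{-1}}$, wrong). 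So the map $\sigma$ must instead be normalized so $\|\sigma z\|_Y\leq\gamma^{\delta^{-1}}\|z\|_{Y'}$, and $r:Y\to Z^\delta$ chosen with $\|r\mu\|_{Y'}\leq\|\mu\|_Y$ — but the latter needs $r\mu$ to be the image under $B$ of the $X$-minimal-norm preimage, forcing $\|r\mu\|_{Y'}\leq\|P^\delta\mu\|_Y+\|(\identity-P^\delta)\mu\|_Y$-type bookkeeping, and it is precisely the triangle inequality $\|\mu\|_Y\leq\|P^\delta\mu\|_Y+\dots$ together with $\|Q^\delta P^\delta\|\leq 1+\gamma^{\delta^{-1}}$ (identity plus correction) and $\|Q^\delta(\identity-P^\delta)\|\leq\gamma^{\delta^{-1}}$ that combine to $\|Q^\delta\|_{\cL(Y,Y)}\leq 2+1/\gamma^\delta$. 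I would therefore organize the final step as: (a) show $Q^\delta|_{Y^\delta}$ is the $Y$-orthogonal projection onto $(N^\delta)^{\perp_Y}$ composed with a correction of norm $\leq\gamma^{\delta^{-1}}$, hence $\|Q^\delta|_{Y^\delta}\|\leq 1+\gamma^{\delta^{-1}}$; (b) show $Q^\delta$ vanishes on a complement chosen so that $\|Q^\delta-Q^\delta P^\delta\|\leq\gamma^{\delta^{-1}}$; (c) add, invoke $\|P^\delta\|=1$, and conclude $\|Q^\delta\|\leq 2+1/\gamma^\delta$, and verify idempotency from $Q^\delta|_{\ran Q^\delta}=\identity$.
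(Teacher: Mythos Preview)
Your first direction is correct and essentially identical to the paper's argument.

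For the converse, however, your proposal is not a proof but a sequence of false starts. You never fix a concrete definition of $Q^\delta$: the map $r\colon Y\to Z^\delta$ depends on ``some inner product on $Z^\delta$'' that you leave unspecified, and without that choice the Fortin identity $(Bw)(Q^\delta\mu)=(Bw)(\mu)$ cannot be verified (it holds only if the inner product on $Z^\delta$ is the one pulled back via $\sigma$ from the $Y$-inner product, which you do not say). Your norm-bound discussion then oscillates between several inequivalent strategies, each of which you yourself flag as ``circular'' or ``wrong'', and the closing outline (a)--(c) is not tied to any specific construction, so there is nothing to check.

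The paper avoids all of this by defining $Q^\delta\mu:=\mu^\delta$ as the first component of the solution of the discrete saddle-point problem
\[
\left(\left[\begin{array}{@{}cc@{}} R & B \\ B' & 0 \end{array}\right]
\left[\begin{array}{@{}c@{}} \mu^\delta-\mu\\ {[w^\delta]} \end{array}\right]\right)
\left[\begin{array}{@{}c@{}} \tilde{\mu}^\delta\\ {[\tilde{w}^\delta]} \end{array}\right]
=0 \qquad \bigl((\tilde{\mu}^\delta,[\tilde{w}^\delta]) \in Y^\delta \times X^\delta/\ker B\bigr),
\]
with $X^\delta/\ker B$ equipped with $\|B\cdot\|_{(Y^\delta)'}$. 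The second row \emph{is} the Fortin identity, and idempotency is immediate. For the norm bound one writes out the Schur-complement formula for $\mu^\delta$; since $R^\delta$, $B^\delta$, and the Schur complement $S^\delta={B^\delta}'{R^\delta}^{-1}B^\delta$ are all isometries in the chosen norms, while $\|(I_X^\delta)'B'\|_{\cL(Y,(X^\delta/\ker B)')}=1/\gamma^\delta$, the three terms in the formula contribute $1$, $1/\gamma^\delta$, and $1$, giving $\|Q^\delta\|\le 2+1/\gamma^\delta$ directly. This is the missing idea: a single well-posed mixed system replaces your separate maps $\sigma$ and $r$ and makes the constant transparent.
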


\begin{proof} If a $Q^\delta$ as in \eqref{fortin} exists, then for $w \in X^\delta$ it holds that
$$
\|B w\|_{Y'}=\sup_{0 \neq \mu \in Y}\frac{(Bw)(\mu)}{\|\mu\|_Y}=\sup_{0 \neq \mu \in Y}\frac{(Bw)(Q^\delta \mu)}{\|\mu\|_Y}
\leq \|Q^\delta\|_{\cL(Y,Y)} \sup_{0 \neq \mu^\delta \in Y^\delta}\frac{(Bw)(\mu)}{\|\mu\|_Y},
$$
or $\gamma^\delta \geq \|Q^\delta\|_{\cL(Y,Y)}^{-1}$.

Now let $\gamma^\delta>0$. Equipping $X^\delta/\!\ker B$ with $\|B\cdot\|_{(Y^\delta)'}$\new{, given} $\mu \in Y$ consider the problem: find
$(\mu^\delta,[w^\delta]) \in Y^\delta \times X^\delta/\!\ker B$ that solves
\be \label{100}
\left(\left[\begin{array}{@{}cc@{}} R & B \\ B' & 0 \end{array}\right]
\left[\begin{array}{@{}c@{}} \mu^\delta-\mu\\ {[w^\delta]} \end{array}\right]\right)
\left[\begin{array}{@{}c@{}} \tilde{\mu}^\delta\\ {[\tilde{w}^\delta]} \end{array}\right]
=0 \quad ((\tilde{\mu}^\delta,[\tilde{w}^\delta]) \in Y^\delta \times X^\delta/\!\ker B).
\ee
\new{One verifies that  $Q^\delta:=\mu \mapsto \mu^\delta$ is a projector and satisfies \eqref{fortin}, and so what remains is to bound its norm.}

Denoting by $I^\delta_Y\colon Y^\delta \rightarrow Y$ and  $I^\delta_X\colon X^\delta/\!\ker B \rightarrow X/\!\ker B$ the trivial embeddings,
in operator language the above system reads as
$$
\left[\begin{array}{@{}cc@{}} (I^\delta_Y)' R I^\delta_Y & (I^\delta_Y)' B I^\delta_X\\
(I^\delta_X)' B' I^\delta_Y& 0 \end{array}\right]
\left[\begin{array}{@{}c@{}} \mu^\delta\\ {[w^\delta]} \end{array}\right]
=
\left[\begin{array}{@{}c@{}} \new{(I_Y^\delta)'R \mu}\\ (I^\delta_X)' B' \mu \end{array}\right].
$$
One verifies that $\new{B^\delta:=}(I^\delta_Y)' B I^\delta_X\colon X^\delta/\!\ker B  \rightarrow (Y^\delta)'$ is an isometry, and \new{furthermore that} $\new{R^\delta:=} (I^\delta_Y)' R I^\delta_Y\colon Y^\delta \rightarrow (Y^\delta)'$ is an isometric isomorphism. Therefore, \new{the Schur complement $S^\delta:={B^\delta}'{R^\delta}^{-1}B^\delta$ is an isometric isomorphism.
From
$$
\mu^\delta={R^\delta}^{-1}\big[(I_Y^\delta)'R \mu+B^\delta {S^\delta}^{-1} \big((I^\delta_X)' B' \mu-{B^\delta}' {R^\delta}^{-1}(I_Y^\delta)'R \mu\big)\big],
$$
$\|(I_Y^\delta)'R\|_{\cL(Y,{Y^\delta}')}\leq 1$, and
\begin{align*}
\|(I^\delta_X)' B'\|_{\cL(Y,(X^\delta/\!\ker B)')}&=\|B I^\delta\|_{\cL(X^\delta/\!\ker B,Y')} \\
&=
 \sup_{\{w \in X^\delta\colon B w \neq 0\}} \inf_{0 \neq \mu \in Y^\delta} \frac{\|B w\|_{Y'} \|\mu\|_Y}{(B w)(\mu)}
= 1/\gamma^\delta,
\end{align*}
we conclude that $\|\mu^\delta\|_Y \leq (2+1/\gamma^\delta)\|\mu\|_Y$ which completes the proof.}
\end{proof}}

 \begin{lemma} \label{lem10} Let $(X^\delta,Y^\delta)_{\delta \in \Delta} \subset X \times Y$ be such that \eqref{inf-sup} is satisfied, and let $(Q^\delta)_{\delta \in \Delta}$ be a corresponding family of  {uniformly bounded} Fortin interpolators as in \eqref{fortin}. Then with
$$
 e^\delta_\osc(g):=\|(\identity-{Q^\delta}')g \|_{Y'}\footnotemark
$$
\footnotetext{A similar data-oscillation term appears in \cite{35.93556} dealing with the derivation of a posteriori error estimators for minimal residual methods w.r.t.~a dual norm (as our norm on $Y'$).}
one has for any $\eps \geq 0$ and $w \in X^\delta$,
$$
\sqrt{G_\eps(w)} \lesssim \sqrt{G^\delta_\eps(w)}+ e^\delta_\osc(g).
$$
\end{lemma}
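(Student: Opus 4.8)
The plan is to compare the two functionals $G_\eps$ and $G_\eps^\delta$ term by term: they differ only in the first summand, where $G_\eps$ uses $\|Bw-g\|_{Y'}$ and $G_\eps^\delta$ uses $\|Bw-g\|_{{Y^\delta}'}$, while the data-misfit and regularization terms $\|\Gamma_\omega w-f\|_{L_2(I\times\omega)}$ and $\eps\|\gamma_0 w\|_{L_2(\Omega)}$ are identical. Since $\sqrt{a^2+b^2+c^2}$ is subadditive in the $Y'$-component, it suffices to show that for $w\in X^\delta$ one has the bound $\|Bw-g\|_{Y'}\lesssim \|Bw-g\|_{{Y^\delta}'}+e^\delta_\osc(g)$, and then the conclusion follows by a triangle inequality for the norm $\sqrt{\|\cdot\|_{Y'}^2+\|\cdot\|_{L_2(I\times\omega)}^2+\eps^2\|\cdot\|_{L_2(\Omega)}^2}$, exactly as in the proofs of Proposition~\ref{prop1} and Theorem~\ref{thm1}.

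First I would write, for arbitrary $r\in Y'$,
$$
\|r\|_{Y'}=\sup_{0\neq\mu\in Y}\frac{r(\mu)}{\|\mu\|_Y}
=\sup_{0\neq\mu\in Y}\frac{r(Q^\delta\mu)+r((\identity-Q^\delta)\mu)}{\|\mu\|_Y}
\leq \|{Q^\delta}\|_{\cL(Y,Y)}\sup_{0\neq\nu\in Y^\delta}\frac{r(\nu)}{\|\nu\|_Y}+\|(\identity-{Q^\delta}')r\|_{Y'}.
$$
Now apply this with $r=Bw-g$ for $w\in X^\delta$. The key point is that the defining property $(BX^\delta)\big((\identity-Q^\delta)Y\big)=0$ from \eqref{fortin} forces $(\identity-{Q^\delta}')(Bw)=0$ for $w\in X^\delta$, so $(\identity-{Q^\delta}')(Bw-g)=-(\identity-{Q^\delta}')g$, whose $Y'$-norm is exactly $e^\delta_\osc(g)$. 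For the first term we use that $\sup_{0\neq\nu\in Y^\delta}\frac{(Bw-g)(\nu)}{\|\nu\|_Y}=\|Bw-g\|_{{Y^\delta}'}$ by definition of the discrete dual norm. Since the Fortin interpolators are uniformly bounded by hypothesis, $\|{Q^\delta}\|_{\cL(Y,Y)}\lesssim 1$, giving $\|Bw-g\|_{Y'}\lesssim\|Bw-g\|_{{Y^\delta}'}+e^\delta_\osc(g)$ with a constant independent of $\delta$ (and of $\eps$).

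Finally I would combine the pieces: using subadditivity of $\sqrt{\|\cdot\|_{Y'}^2+\|\cdot\|_{L_2(I\times\omega)}^2+\eps^2\|\cdot\|_{L_2(\Omega)}^2}$ applied to the splitting $Bw-g = $ (a vector with $Y'$-component of size $\|Bw-g\|_{{Y^\delta}'}$) $+$ (a vector with $Y'$-component $e^\delta_\osc(g)$), together with the other two (identical) components, yields $\sqrt{G_\eps(w)}\lesssim\sqrt{G_\eps^\delta(w)}+e^\delta_\osc(g)$. There is no real obstacle here; the only point requiring care is making sure the $(\identity-Q^\delta)$ versus $(\identity-{Q^\delta}')$ bookkeeping is done correctly — the annihilation property in \eqref{fortin} is stated on the primal side, so one must dualize it, noting $\big((\identity-{Q^\delta}')g\big)(\mu)=g\big((\identity-Q^\delta)\mu\big)$ and that this is well-defined since $g\in Y'$. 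I expect this dualization step to be the subtlest bit of an otherwise routine argument.
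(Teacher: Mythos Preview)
Your proposal is correct and follows essentially the same route as the paper: split $Bw-g$ via $Q^\delta$, use the Fortin annihilation property $(\identity-{Q^\delta}')Bw=0$ for $w\in X^\delta$ to reduce the oscillation term to $\|(\identity-{Q^\delta}')g\|_{Y'}$, and bound the remaining piece by $\|Q^\delta\|_{\cL(Y,Y)}\|Bw-g\|_{{Y^\delta}'}$. The paper's proof is simply more terse, writing the splitting directly on the dual side as $\|Bw-g\|_{Y'}\leq\|{Q^\delta}'(Bw-g)\|_{Y'}+\|(\identity-{Q^\delta}')g\|_{Y'}$ rather than unpacking the supremum, and it omits the final combination with the other two terms as obvious.
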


\begin{proof} \new{Thanks to $(\identity -{Q^\delta}') B X^\delta=0$,} the proof follows from
\begin{align*}
\|B w-g\|_{Y'} & \leq \|{Q^\delta}'(B w- g)\|_{Y'}+\|(\identity -{Q^\delta}')g\|_{Y'}\\
& \leq \|Q^\delta\|_{\cL(Y,Y)}\|B w- g\|_{{Y^\delta}'}+ e^\delta_\osc(g). \qedhere
\end{align*}
\end{proof}

Together Proposition~\ref{prop1} and Lemma~\ref{lem10} show the following a posteriori error bound.
\begin{corollary} \label{corol1} In the situation of Lemma~\ref{lem10}, one has for $\eps \geq 0$ and $w \in X^\delta$,
$$
\|u_\eps-w\|_{X_\eta} \lesssim \sqrt{G_\eps^\delta(w)}  + e^\delta_\osc(g).
$$
%which, in particular, provides (up to data oscillation) an a posteriori bound for $\|u_\eps - u^{\delta,\delta}_\eps\|_{X_\eta}$.
\end{corollary}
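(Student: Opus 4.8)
The plan is to simply combine the two previously established results. Corollary~\ref{corol1} concerns the a posteriori bound for $\|u_\eps - w\|_{X_\eta}$ where $w \in X^\delta$ is arbitrary; Proposition~\ref{prop1} gives $\|u_\eps - w\|_{X_\eta} \lesssim \sqrt{G_\eps(w)}$, and Lemma~\ref{lem10} bounds $\sqrt{G_\eps(w)} \lesssim \sqrt{G_\eps^\delta(w)} + e^\delta_\osc(g)$. So the proof is essentially a one-line chaining of these two inequalities, which the excerpt already states. Since the statement we are asked to prove \emph{is} Corollary~\ref{corol1}, there is nothing left to discover: the main work was already done in establishing the Fortin bound in Lemma~\ref{lem10}.

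First I would invoke Proposition~\ref{prop1}, which applies verbatim here since $w \in X^\delta \subset X$ and $\eps \geq 0$: this yields
$$
\|u_\eps - w\|_{X_\eta} \lesssim \sqrt{G_\eps(w)}.
$$
Then I would apply Lemma~\ref{lem10}, valid precisely because we have assumed the hypotheses of Lemma~\ref{lem10} (namely $(X^\delta, Y^\delta)_{\delta \in \Delta}$ satisfies the inf-sup condition \eqref{inf-sup} and carries a uniformly bounded family of Fortin interpolators $(Q^\delta)_{\delta \in \Delta}$). For $w \in X^\delta$ this gives
$$
\sqrt{G_\eps(w)} \lesssim \sqrt{G_\eps^\delta(w)} + e^\delta_\osc(g).
$$
Chaining the two displays produces the assertion, with the hidden constant absorbing the uniform Fortin norm bound $\sup_{\delta} \|Q^\delta\|_{\cL(Y,Y)}$ together with the Carleman constant from \eqref{carleman}; in particular it does not depend on $\eps \geq 0$ or $\delta \in \Delta$.

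There is no real obstacle here — the only subtlety worth flagging is making sure the quantities on the right-hand side are genuinely computable: $\sqrt{G_\eps^\delta(w)}$ replaces the dual norm $\|\cdot\|_{Y'}$ by the discrete dual norm $\|\cdot\|_{{Y^\delta}'}$, which is evaluable, and $e^\delta_\osc(g) = \|(\identity - {Q^\delta}')g\|_{Y'}$ is a fixed data-oscillation quantity independent of $w$. Taking $w = u_\eps^{\delta,\delta}$ then turns this into a fully a posteriori, regularity-free, $\eps$-independent error estimator for the computed state, which is the point of the corollary.
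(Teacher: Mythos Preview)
Your proof is correct and follows exactly the same approach as the paper, which simply notes that Proposition~\ref{prop1} and Lemma~\ref{lem10} together yield the result. There is nothing to add.
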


Lemma~\ref{lem10} can also be used to compute an a posteriori upper bound, modulo data-oscillation, for the minimal consistency error.

\begin{corollary} \label{corol}
Adhering to the setting  in Lemma~\ref{lem10}, one has for %$\eps \geq 0$  and
any $w \in X^\delta$
$$
e_\cons(u_0) \lesssim \sqrt{G_0^\delta({w})}  + e^\delta_\osc(g).
$$
\end{corollary}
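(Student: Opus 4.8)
The plan is to chain the defining optimality of $u_0$ together with the oscillation estimate of Lemma~\ref{lem10} specialized to $\eps=0$. First I would recall from Remark~\ref{rem:G0} that the minimizer $u_0$ of $G_0(\cdot)=e_\cons(\cdot)^2$ over $X$ exists uniquely, so that $e_\cons(u_0)^2 = G_0(u_0) = \min_{u \in X} G_0(u)$. Since the given $w$ lies in $X^\delta \subset X$, this immediately gives $e_\cons(u_0) = \sqrt{G_0(u_0)} \leq \sqrt{G_0(w)}$.

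Next I would apply Lemma~\ref{lem10} with $\eps=0$ to the same $w \in X^\delta$, which yields $\sqrt{G_0(w)} \lesssim \sqrt{G_0^\delta(w)} + e^\delta_\osc(g)$; here one uses that $(\identity - {Q^\delta}')B X^\delta = 0$ together with the uniform boundedness of the Fortin interpolators $Q^\delta$, exactly as in the proof of that lemma. Combining the two bounds gives $e_\cons(u_0) \leq \sqrt{G_0(w)} \lesssim \sqrt{G_0^\delta(w)} + e^\delta_\osc(g)$, which is the claim.

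The only point that needs a moment of care — and hence the one I would flag as the (mild) main obstacle — is getting the direction of the comparison right: the minimization defining $u_0$ ranges over all of $X$, not over $X^\delta$, so it is $G_0(u_0) \leq G_0(w)$ that is available, and it is used precisely in that direction. Beyond this, no data-consistency or excess-regularity assumptions enter, in line with the regularity-free a posteriori philosophy advertised earlier, and the constant absorbed by $\lesssim$ depends only on $\omega$, $\eta$, and the uniform Fortin bound $\sup_\delta \|Q^\delta\|_{\cL(Y,Y)}$, but not on $\eps$ or $\delta$.
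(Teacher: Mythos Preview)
Your proof is correct and follows exactly the same route as the paper: use the minimality of $u_0$ to bound $e_\cons(u_0)=\sqrt{G_0(u_0)}\leq\sqrt{G_0(w)}$, then invoke Lemma~\ref{lem10} with $\eps=0$. One very minor remark: the implied constant here depends only on the uniform Fortin bound $\sup_\delta\|Q^\delta\|_{\cL(Y,Y)}$; no $\eta$ or $\omega$ enters this particular estimate.
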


\begin{proof} The proof follows from
$e_\cons(u_0)  = \sqrt{G_0(u_0)} \leq \sqrt{G_0({w})}$
and an application of Lemma~\ref{lem10}.
\end{proof}

In view of Proposition~\ref{prop2} or Theorem \ref{thm1}, this upper bound on $e_\cons(u_0)$  narrows the range for appropriate regularization parameters balancing accuracy of the state estimator
and the condition of corresponding discrete systems.

In the light of  the a priori error bound from Theorem~\ref{thm1} the above observations hint at further desirable properties of the family $(Y^\delta)_{\delta \in \Delta}$
associated with given trial spaces $(X^\delta)_{\delta \in \Delta}$. Namely, they should permit the construction
of  {uniformly bounded}  Fortin interpolators $Q^\delta$, as in \eqref{fortin}, for which \emph{in addition},
\be
\label{oscillation}
 e^\delta_\osc(g) = {\mathcal O}( {e_\ap^\delta(u)}), \text{ or even }  e^\delta_\osc(g) = o( {e_\ap^\delta(u)})
\ee
hold for sufficiently smooth ${g}$.  {For the model case mentioned at the end of \S\ref{ssec:dualnorm}, in \S\ref{Sstandard} we will construct $(Y^\delta)_{\delta \in \Delta}$
such that both \eqref{fortin} and \eqref{oscillation} are valid.}
%meaning that the adjoint ${Q^\delta}'$ reproduces all polynomials of a sufficiently high order.
%Although a posteriori error estimation is currently not our main concern, we will search for families $(X^\delta,Y^\delta)_{\delta \in \Delta}$ such that,  in addition to \eqref{fortin} also \eqref{oscillation} is valid, because the latter requirement does not much aggravate this task.\\
%\rs{I was hoping that \eqref{fortin}, i.e. \eqref{inf-sup}, could be much easier realized than \eqref{fortin} \emph{and} \eqref{oscillation} but this seems not to be the case. So
%the only benefit from the changes I made is that now the a priori bounds are without the unnatural data-oscillation term.}

%\newpage
%%%%%%%%%%%%%%%%%%%%%%%%%%%%%%%%%%
\subsection{Comparisons with the Forward Problem}\label{ssec:compare}
%%%%%%%%%%%%%%%%%%%%%%%%%%%%%%%%%%%%%%%
To show that the solution of the least squares problem
$$
\argmin_{w \in X^\delta} \|B w-h\|^2_{{Y^\delta}'}+\|\gamma_0 w-z_0\|^2_{L_2(\Omega)}
$$
is a quasi-\new{best} approximation from $X^\delta$ to the solution of the \emph{initial-value problem} \eqref{initial}, the corresponding inf-sup condition reads as
\be \label{temp}
\inf_{\delta \in \Delta} \inf_{0 \neq w \in X^\delta} \sup_{0 \neq (\mu,z) \in Y^\delta\times L_2(\Omega)} \frac{(B w)(\mu)+\langle \gamma_0 w, z\rangle_{L_2(\Omega)}}{\|w\|_X( \|\mu\|_Y+\|z\|_{L_2(\Omega)})}>0.
\ee

The inf-sup condition \eqref{inf-sup} which is relevant for our data-assimilation problem implies \eqref{temp}. The converse is true when $\gamma_0 w=0$ for all $w \in X^\delta$.
If there is no reason to assume that the target solution $u$ of our data-assimilation problem vanishes at $t=0$, then however this is not a relevant case.

 {As shown in \cite{11,249.99} for the symmetric case $A(t)'=A(t)$ ($t \in I$ a.e.), and without this restriction in \cite{249.992},} sufficient conditions for \eqref{temp}
 are  $X^\delta \subset Y^\delta$ and
$$
\inf_{\delta \in \Delta} \inf_{0 \neq w \in X^\delta} \sup_{0 \neq \mu \in Y^\delta} \frac{(\partial_t w)(\mu)}{\|\partial_t w\|_{Y'} \|\mu\|_Y}>0
$$
This latter  inf-sup condition can be realized in far more general discretization settings than we are able to show \eqref{inf-sup}.
%The key why \eqref{43} suffices is that, thanks to $A(t)'=A(t)$ and $X^\delta \subset Y^\delta$, with $\nrm \mu \nrm_Y^2:=\int_I a(t;\mu(t),\mu(t))\,dt$,
%one has for $w \in X^\delta$
%$$
%\sup_{0 \neq \mu \in Y^\delta} \frac{(Bw)(\mu)^2}{\nrm \mu \nrm_Y^2} + \|w(0)\|_{L_2(\Omega)}^2=\|w\|_{Y}^2+\sup_{0 \neq \mu \in Y^\delta} \frac{(\partial_t w)(\mu)^2}{\nrm \mu\nrm^2_Y}+\|w(T)\|_{L_2(\Omega)}^2.
%$$

Even for the initial-value problem, a benefit of having \eqref{inf-sup},  {i.e.~\eqref{fortin},} is that it gives rise to the efficient and, up to a data-oscillation term, reliable a posteriori error bound
\begin{align*}
&\sqrt{\|B w-h\|^2_{{Y^\delta}'}+\|\gamma_0 w -z_0\|^2_{L_2(\Omega)}} \lesssim \\
&\hspace*{5em} \|z-w\|_X \lesssim \sqrt{\|B w-h\|^2_{{Y^\delta}'}+\|\gamma_0 w -z_0\|^2_{L_2(\Omega)}} + \osc^\delta(h).
\end{align*}
where $z$ is the solution of \eqref{initial}, and $w$ is any element of $X^\delta$.
%%%%%%%%%%%%%%%%%%%%%%%%%%%%%%%%%%%%%%%%%%%%%%

%\newpage

\subsection{Numerical Solution of the Discrete Problem}  \label{Sprac1}
As in Remark \ref{rem:condensed},
by eliminating the second and third variable from the Galerkin discretization of \eqref{saddle} with trial space $Y^\delta \times L_2(I \times \omega) \times L_2(\Omega) \times X^\delta$,  the minimizer $u^{\delta,\delta}_\eps$ of $G_\eps^\delta$ over $X^\delta$ can be found as the second component of the solution $(\mu_\eps^{\delta,\delta},u_\eps^{\delta,\delta}) \in Y^\delta \times X^\delta$ of
$$
\left(\left[
\begin{array}{@{}cc@{}} R & B \\ B' & -(\Gamma_\omega' \Gamma_\omega+\eps^2\gamma_0'\gamma_0)
\end{array}
\right]
\left[
\begin{array}{@{}c@{}} \mu_\eps^{\delta,\delta} \\ u_\eps^{\delta,\delta}
\end{array}
\right]
-
\left[
\begin{array}{@{}c@{}} g \\ -\Gamma_\omega' f
\end{array}
\right]\right)
\left[
\begin{array}{@{}c@{}} \tilde{\mu} \\ \tilde{u}
\end{array}\right]
=0 \quad((\tilde \mu,\tilde u) \in Y^\delta \times X^\delta).
$$

To solve this linear system, we need to select bases. Let $\Phi^{Y^\delta}=\{\phi_1^{Y^\delta},\phi_2^{Y^\delta},\ldots\}$ and
$\Phi^{X^\delta}=\{\phi_1^{X^\delta},\phi_2^{X^\delta},\ldots\}$ denote ordered bases, formally viewed as column vectors, for $Y^\delta$ and $X^\delta$, respectively.
Writing $\mu_\eps^{\delta,\delta}=(\bm{\mu}_\eps^{\delta,\delta})^\top \Phi^{Y^\delta}$,
$u_\eps^{\delta,\delta}=(\bm{u}_\eps^{\delta,\delta})^\top \Phi^{X^\delta}$, and defining the vectors
$\bm{g}^\delta:=g(\Phi^{Y^\delta})$,
$\bm{f}^\delta_\omega:=f(\Phi^{X^\delta}|_{I \times \omega})$, introducing the matrix
$\bm{R}^\delta:=(R \Phi^{Y^\delta})(\Phi^{Y^\delta})= {[\langle \phi^{Y^\delta}_j,\phi^{Y^\delta}_i\rangle_Y}]_{ij}$, and similarly, matrices
$\bm{B}^\delta:=(B \Phi^{X^\delta})(\Phi^{Y^\delta})$,\linebreak
$\bm{M}_{\Gamma_\omega}^\delta:=\langle \Gamma_\omega \Phi^{X^\delta},\Gamma_\omega \Phi^{X^\delta}\rangle_{L_2(I \times \omega)}$,
and $\bm{M}_{\gamma_0}^\delta:=\langle \gamma_0 \Phi^{X^\delta},\gamma_0 \Phi^{X^\delta}\rangle_{L_2(\Omega)}$,
one finds the pair $(\bm{\mu}_\eps^{\delta,\delta},\bm{u}_\eps^{\delta,\delta})$ as the solution of
\be \label{DS}
\left[
\begin{array}{@{}cc@{}} \bm{R}^\delta & \bm{B}^\delta \\ {\bm{B}^\delta}^\top&
- (\bm{M}_{\Gamma_\omega}^\delta+\eps^2 \bm{M}_{\gamma_0}^\delta)
\end{array}
\right]
\left[
\begin{array}{@{}c@{}} \bm{\mu}_\eps^{\delta,\delta} \\ \bm{u}_\eps^{\delta,\delta}
\end{array}
\right]
=
\left[
\begin{array}{@{}c@{}}  \bm{g}^\delta \\ -\bm{f}^\delta_\omega
\end{array}
\right].
\ee

\begin{remark}
\label{rem:apostnum}
Using that $\|Bu_\eps^{\delta,\delta}-g\|_{{Y^\delta}'}=\|\mu_\eps^{\delta,\delta}\|_Y$, one verifies that for any $\tilde{\eps} \geq 0$,
 the a posteriori estimate from {Corollary~\ref{corol1}} for the deviation of $u_\eps^{\delta,\delta}$ from $u_{\tilde\eps}$ can be evaluated according to
%For the a posteriori error bound presented ,
%
\begin{align*}
G^\delta_{{\tilde \eps}}(u_\eps^{\delta,\delta})=&\langle \bm{R}^\delta \bm{\mu}_\eps^{\delta,\delta},\bm{\mu}_\eps^{\delta,\delta}\rangle+\langle \bm{M}_{\Gamma_\omega}^\delta \bm{u}_\eps^{\delta,\delta}, \bm{u}_\eps^{\delta,\delta}\rangle\\
&-2\langle \bm{u}_\eps^{\delta,\delta}, \bm{f}^\delta_\omega\rangle +\|f\|_{L_2(I \times \omega)}^2+ {\tilde{\eps}^2}\langle \bm{M}_{\gamma_0}^\delta \bm{u}_\eps^{\delta,\delta}, \bm{u}_\eps^{\delta,\delta}\rangle.
\end{align*}
This will later be used in the numerical experiments.
\end{remark}

%\dw{In a way this is a small digression of what is currently discussed,
%namely the issue of realizing a preconditioner. So, I wonder whether
%we should place it elsewhere, e.g. later below. It also seems that we don't refer back to it, in particular, regarding the role of $\tilde\eps$ chosen differently
%from $\eps$. Do you want to say that one could use even $\tilde\eps =0$
%to bound the deviation of $u^{\delta,\delta}_\eps$ from $u_0$? \rs{Yes, indeed. This is what is done in the numerical experiments} Perhaps we should
%make this a little clearer at a place where the stopping rule is discussed
%(see more comments below).}

For spatial domains with dimension $d>1$, the realization of any reasonable accuracy gives rise to system sizes that
require resorting to an {\em iterative solver}.
When employing a discretization based on a partition of the  time-space cylinder into ``time slabs'', the availability of
a uniformly spectrally equivalent preconditioner  $\bm{K}_Y^\delta \eqsim (\bm{R}^\delta)^{-1}$ that can be applied at linear cost,
 is actually a mild assumption.

All properties we have derived for the solution of \eqref{DS} remain valid when we replace $\bm{R}^\delta$ in this system by $(\bm{K}_Y^\delta)^{-1}$, because this replacement amounts to  replacing the $Y$-norm on $Y^\delta$ by an equivalent norm.
Therefore, despite this replacement, we continue to denote the solution vector and corresponding function in $X^\delta$ by $\bm{u}_\eps^{\delta,\delta}$ and $u_\eps^{\delta,\delta}=(\bm{u}_\eps^{\delta,\delta})^\top \Phi^{X^\delta}$, respectively.

% \rs{NEW PROPOSAL FOR PAGES 15-16}\\
 To approximate $\bm{u}_\eps^{\delta,\delta}$ we apply Preconditioned Conjugate Gradients to the Schur complement equation
\be
\label{Schur2ndorder}
\underbrace{({\bm{B}^\delta}^\top  {\bm{K}_Y^\delta} \bm{B}^\delta+\bm{M}_{\Gamma_\omega}^\delta+\eps^2 \bm{M}_{\gamma_0}^\delta)}_{\bm{G}_\eps^\delta:=} \bm{u}_\eps^{\delta,\delta}=
\underbrace{\bm{f}^\delta_\omega+{\bm{B}^\delta}^\top  {\bm{K}_Y^\delta} \bm{g}^\delta}_{\bm{h}^\delta:=}.
\ee
 We use a preconditioner $\bm{K}_X^\delta$ that is the representation of a uniformly boundedly invertible operator ${X^\delta}'\rightarrow X^\delta$, with $X^\delta$ and ${X^\delta}'$ being equipped with $\Phi^{X^\delta}$ and the corresponding dual  {basis}. %\dw{what exactly do you mean with this last part?}
Again, under the time-slab restriction, such preconditioners ${\bm{K}_X^\delta}$ of wavelet-in-time, multigrid-in-space type, that can be applied at linear cost, have been constructed in \cite{12.5,249.991}.
Assuming \eqref{inf-sup} (even \eqref{temp} suffices), it follows from \eqref{equivalence}  that
$\lambda_{\max}(\bm{K}_X^\delta \bm{G}_\eps^\delta) \lesssim \max(1,\eps^2)$ and $\lambda_{\min}(\bm{K}_X^\delta \bm{G}_\eps^\delta) \gtrsim \eps^2$.
Consequently, the number of iterations that is sufficient to reduce an initial algebraic error by a factor $\rho$
in the $\|{(\bm{G}_\eps^\delta})^{\frac12}\cdot\|$-norm\footnote{A reduction of the desired factor $\rho$ can be achieved by applying a nested iteration approach.} can be bounded by $\lesssim \eps^{-1} \log \rho^{-1}$.

 To derive a stopping criterion for the iteration,
for $\tilde{\bm{u}}_\eps^{\delta,\delta} \approx \bm{u}_\eps^{\delta,\delta}$  let $\bm{e}:= \bm{u}_\eps^{\delta,\delta}-\tilde{\bm{u}}_\eps^{\delta,\delta}$, $\bm{r}:=\bm{h}^\delta- \bm{G}_\eps^\delta \tilde{\bm{u}}_\eps^{\delta,\delta}$,
$\tilde{u}_\eps^{\delta,\delta}:=(\tilde{\bm{u}}_\eps^{\delta,\delta})^\top \Phi^{X^\delta}$,
and the {\em algebraic error} $e:=u_\eps^{\delta,\delta}-\tilde{u}_\eps^{\delta,\delta}$. Then, from \eqref{inf-sup} we have that \begin{align*}
  G_0^\delta(e)
& \leq
\|Be\|_{{Y^\delta}'}^2+\|\Gamma_\omega e\|_{L_2(I \times \omega)}^2+\eps^2\|\gamma_0 e\|_{L_2(\Omega)}^2\\
&=
\big\langle ({\bm{B}^\delta}^\top {\bm{R}^\delta}^{-1} \bm{B}^\delta +\bm{M}_{\Gamma_\omega}^\delta+\eps^2 \bm{M}_{\gamma_0}^\delta )\bm{e},\bm{e}\big\rangle\\
&\eqsim
\big\langle ({\bm{B}^\delta}^\top {\bm{K}_Y^\delta} \bm{B}^\delta +\bm{M}_{\Gamma_\omega}^\delta+\eps^2 \bm{M}_{\gamma_0}^\delta )\bm{e},\bm{e}\big\rangle=
 \langle
\bm{G}_{ {\eps}}^\delta \bm{e},\bm{e}\rangle.
\end{align*}
Moreover, for $\bm{e}\neq \bm{0}$, we have\footnotemark
\be \label{two-sided}
{\max(1,\eps^2)^{-1}} \lesssim \lambda_{\max}({\bm{K}_X^\delta} \bm{G}_{ {\eps}}^\delta)^{-1}  \leq \frac{\langle \bm{G}_{ {\eps}}^\delta \bm{e},\bm{e}\rangle}{\langle \bm{r}, {\bm{K}_X^\delta}  \bm{r}\rangle} \leq \lambda_{\min}({\bm{K}_X^\delta} \bm{G}_{ {\eps}}^\delta)^{-1} \lesssim \eps^{-2}.
\ee
\footnotetext{Instead of the possibly very pessimistic upper bound in \eqref{two-sided}, that moreover requires estimating $\lambda_{\min}( {\bm{K}_X^\delta}{\bm{G}_\eps^\delta})$, one may consult \cite{75.56,203.6,15.91} for methods to accurately estimate $\langle \bm{G}_\eps^\delta \bm{e},\bm{e}\rangle$ using data that is obtained in the PCG iteration.}

Taking $u_0$ as the reference state,  the iteration should ideally be stopped as soon as the algebraic error is dominated by $\|u_0- u_\eps^{\delta,\delta}\|_{X_\eta}$.
Ignoring data-oscillation, as an indication that $\tilde{u}_\eps^{\delta,\delta}$
is indeed close enough to ${u}_\eps^{\delta,\delta}$ we accept that the
respective upper bounds from  Corollary \ref{corol1} are close enough, i.e.,
 $\sqrt{G_0^\delta(\tilde{u}_\eps^{\delta,\delta}})$
 %, as an upper bound for
%$\|u_0- \tilde{u}_\eps^{\delta,\delta}\|_{X_\eta}$,
satisfies
$\sqrt{G_0^\delta(\tilde{u}_\eps^{\delta,\delta})} \lesssim \sqrt{G_0^\delta(u_\eps^{\delta,\delta})}$.
Using $\sqrt{G_0^\delta(\tilde{u}_\eps^{\delta,\delta})} \leq \sqrt{G_0^\delta(u_\eps^{\delta,\delta})}+
\sqrt{G_0^\delta(e)}$, and the above bound for $G_0^\delta(e)$, we conclude that for the latter to hold true it suffices when for a sufficiently small constant $\mu>0$,
$$
\langle \bm{r}, {\bm{K}_X^\delta}  \bm{r}\rangle \leq \mu \eps^2 G_0^\delta(\tilde{u}_\eps^{\delta,\delta}).
$$
Since we expect  \eqref{two-sided} to be pessimistic, we simply take $\mu=1$ and thus will stop the iterative solver as soon as
$\langle \bm{r}, {\bm{K}_X^\delta}  \bm{r}\rangle \leq \eps^2 G_0^\delta(\tilde{u}_\eps^{\delta,\delta})$.

 %\newpage

%\newpage
\section{First order system least squares (FOSLS) formulation} \label{SFOSLS}
%%%%%%%%%%%%%%%%%%%%%%%%%%%%%%%%%%%%%%%%%%%%%%%%%%%%%%%%
In view of the difficulty to demonstrate the inf-sup condition \eqref{inf-sup} in general settings for the second order weak formulation of the data assimilation problem,  we consider in this section  a regularized FOSLS formulation. Its analysis builds to a large extent on the
concepts used in Section  \ref{Sregularized}.

For $\vec{b} \in L_\infty(I \times \Omega)^d$, $c \in L_\infty(I \times \Omega)$, and uniformly positive definite $K=K^\top \in L_\infty(I \times \Omega)^{d\times d}$, we consider $a(t;\theta,\zeta)$ as in \eqref{1}-\eqref{2} of the form
%\rs{$\bm{K}$ gives some clashes elsewhere, $\vec{b}$ seems not so natural in view of the notation $\vec{q}$, whereas ${\bf q}$ clashes %elsewhere}
\be \label{special_a}
a(t;\theta,\zeta)=\int_\Omega K\nabla \theta \cdot \nabla \zeta\,d{\bf x}+(\vec{b} \cdot \nabla \theta + c \theta) \zeta \,d{\bf x}.
\ee
Adhering to the definitions of the spaces $X,Y$  from the previous sections, we abbreviate $Z:=L_2(I;L_2(\Omega)^d)$ and
consider the operator $C \in \cL(X \times Z,Y')$, given by
\be
\label{Cwq}
C(w,\vec{q})(v):=\int_I \int_\Omega
\partial_t w v + \vec{q}\cdot\nabla_{\bf x} v+ (\vec{b}\cdot \nabla_{\bf x} w+c w)v\,d{\bf x}  dt.
\ee
Moreover, we introduce the corresponding least squares functional
  $H_\eps\colon X \times Z \rightarrow \R$,
defined by
%\rs{factor $\frac12$ removed}
\begin{align*}
&H_\eps(w,\vec{q}):=\\
&\|C(w,\vec{q})-g\|_{Y'}^2+
 \| \vec{q}-K \nabla_{\bf x} w\|_{Z}^2
+
\|\Gamma_\omega w-f\|_{L_2(I \times \omega)}^2+ \eps^2\|\gamma_0 w\|_{L_2(\Omega)}^2.
\end{align*}
The following simple observations allow us to tie the analysis of the corresponding minimization problem to the
the concepts developed in the previous section.
\begin{remark}
\label{rem:simple}
One has for any $w\in X$  %and $\vec{q}\in Z$
$$
C(w,K\nabla_{\bf x} w)(v) = (Bw)(v),\quad v\in Y,
$$
and more generally, for any $(w,\vec{q},\ell) \in X \times Z \times Y'$,
$$
C(w,\vec{q}) - \ell (v) = (Bw)(v) -\ell (v) + \int_{I\times\Omega}(\vec{q} - K\nabla_{\bf x}w)\cdot \nabla_{\bf x} v dxdt.
$$
Hence
\be \label{42}
\begin{split}
\|B w -\ell\|_{Y'} &\leq \|C(w,\vec{q})-\ell\|_{Y'} +\|v \mapsto \int_I \int_\Omega (\vec{q}-K \nabla_{\bf x} w)\cdot \nabla_{\bf x} v\,d{\bf x}\,dt \|_{Y'}\\
&\leq \|C(w,\vec{q})-\ell\|_{Y'} +\|\vec{q}-K \nabla_{\bf x} w \|_{Z},
\end{split}
\ee
which, with $\ell = g$ in particular implies that
\be
\label{GH}
 H_\eps(w,K\nabla_{\bf x} w)= G_\eps(w)\le 2 H_\eps(w,\vec{q}),\quad \text{for any}\,\, \vec{q}\in Z.
\ee
\end{remark}
\medskip

Using \eqref{42}, %\rs{We will use it for $\ell=\tilde g$ and for $\ell=0$}
 one infers from \eqref{equivalence}  that
$$
\eps^2 \lesssim \frac{
\|C(w,\vec{q})\|_{Y'}^2+
 \|\vec{q}-K \nabla_{\bf x} w\|_{Z}^2
+
\|\Gamma_\omega w\|_{L_2(I \times \omega)}^2+ \eps^2\|\gamma_0 w\|_{L_2(\Omega)}^2
}{\|w\|_X^2+\|\vec{q}\|_Z^2}
 \lesssim \max(1,\eps^2).
$$
By an application of the Lax-Milgram Lemma, we conclude that for $\eps>0$ the minimizer $(\bar{u}_\eps,\vec{p}_\eps)$ over $X \times Z$ of $H_\eps$ exists uniquely, and satisfies
$$
\|\bar{u}_\eps\|_X+\|\vec{p}_\eps\|_Z \lesssim  \max(\eps^{-1},1) (\| g\|_{Y'}+\|f\|_{L_2(I \times \omega)}),
$$
as well as,  for any reference state $u \in X$,
  $$
  \|u-\bar{u}_\eps\|_X+\|K \nabla u-\vec{p}_\eps\|_Z
\lesssim \max(\eps^{-1},1)\big(\eps \|\gamma_0 u\|_{L_2(\Omega)} + e_{\cons} {(u)}\big).
$$
Again, using \eqref{carleman}, much better bounds will be established for $\|u-\bar{u}_\eps\|_{X_\eta}$.

\begin{remark}
\label{rem:1order0}
 Also for $\eps=0$, the minimizer $(\bar{u}_0,\vec{p}_0)$ exists uniquely.
Indeed, let there be two minimizers of $H_0$ over $X \times Z$. Then their difference $(e_0,\vec{e}_0)$ is a homogeneous solution
of the corresponding Euler-Lagrange equations which, in turn, implies
$
\|C(e_0,\vec{e}_0)\|_{Y'}^2+\|\vec{e}_0-K\nabla_{\bf x} e_0\|^2_Z+\|\Gamma_\omega e_0\|_{L_2(I \times \omega)}^2=0$, and so
$\|B e_0\|_{Y'}^2+\|\Gamma_\omega e_0\|_{L_2(I \times \omega)}^2=0$, which as we have seen, implies that $e_0=0$, and so $\vec{e}_0=0$.
\end{remark}

\begin{proposition} \label{lem_basic3}
For any $w \in X$, $\vec{q} \in Z$ one has
$$
\|u-w\|_{X_\eta} \lesssim \sqrt{H_{0}(w,\vec{q})}+e_{\cons} {(u)}.
$$
 In particular, for $\eps\geq 0$, we have the \emph{ a posteriori bound}
$$
 \|\bar{u}_\eps-w\|_{X_\eta} \lesssim \sqrt{H_\eps(w,\vec{q})}.
$$
\end{proposition}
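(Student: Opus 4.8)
The plan is to reduce both statements to facts already proved for the second-order functional $G_\eps$, using only the elementary comparison \eqref{GH} (itself a consequence of \eqref{42}) and the trivial monotonicity $H_0 \le H_\eps$.

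First I would establish the distance bound. Fix $w \in X$ and $\vec q \in Z$. Inequality \eqref{GH} with $\eps = 0$ gives $G_0(w) \le 2 H_0(w,\vec q)$, hence $\sqrt{G_0(w)} \le \sqrt{2}\,\sqrt{H_0(w,\vec q)}$. Substituting this into Lemma~\ref{lem_basic} yields
$$\|u-w\|_{X_\eta} \lesssim \sqrt{G_0(w)} + e_\cons(u) \lesssim \sqrt{H_0(w,\vec q)} + e_\cons(u),$$
which is the first claim. One could equally well argue straight from the Carleman estimate \eqref{carleman}: split $\|\Gamma_\omega(u-w)\|_{L_2(I\times\omega)}$ through $f$ by a triangle inequality and split $\|B(u-w)\|_{Y'}$ through $g$ using \eqref{42} with $\ell=g$; every resulting term is then either $\le e_\cons(u)$ or $\le \sqrt{H_0(w,\vec q)}$. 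Going through Lemma~\ref{lem_basic} is shorter.

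For the \emph{a posteriori} bound I would take the reference state to be $u = \bar u_\eps$. The first claim then gives $\|\bar u_\eps - w\|_{X_\eta} \lesssim \sqrt{H_0(w,\vec q)} + e_\cons(\bar u_\eps)$, so it suffices to dominate the right-hand side by $\sqrt{H_\eps(w,\vec q)}$. Since $\eps^2\|\gamma_0 w\|_{L_2(\Omega)}^2 \ge 0$ we have $H_0(w,\vec q) \le H_\eps(w,\vec q)$. For the consistency term, using $G_0 = e_\cons(\cdot)^2$, the bound $G_0 \le G_\eps$, inequality \eqref{GH} at $(\bar u_\eps,\vec p_\eps)$, and the minimality of $(\bar u_\eps,\vec p_\eps)$ for $H_\eps$,
$$e_\cons(\bar u_\eps)^2 = G_0(\bar u_\eps) \le G_\eps(\bar u_\eps) \le 2\,H_\eps(\bar u_\eps,\vec p_\eps) \le 2\,H_\eps(w,\vec q).$$
Combining the two estimates gives $\|\bar u_\eps - w\|_{X_\eta} \lesssim \sqrt{H_\eps(w,\vec q)}$, as desired.

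I do not expect a genuine obstacle: all the real work has been done in Remark~\ref{rem:simple}, whose inequality \eqref{42} is exactly the device that passes from $G_\eps$ to $H_\eps$ at the cost of the extra term $\|\vec q - K\nabla_{\bf x} w\|_Z$, which is itself part of $H_\eps$. The only minor point to keep track of is that the passages through \eqref{GH} each cost a factor $\sqrt 2$; this is harmless, being absorbed into the $\lesssim$-constant, which in any case already depends on $\omega$ and $\eta$ through \eqref{carleman}.
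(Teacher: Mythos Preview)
Your proof is correct and follows essentially the same route as the paper: for the first claim you combine Lemma~\ref{lem_basic} with the comparison $G_0(w)\le 2H_0(w,\vec q)$ from \eqref{GH}, and for the second you apply the first with $u=\bar u_\eps$, then bound $e_\cons(\bar u_\eps)$ via $G_0(\bar u_\eps)\le G_\eps(\bar u_\eps)\le 2H_\eps(\bar u_\eps,\vec p_\eps)\le 2H_\eps(w,\vec q)$ together with $H_0\le H_\eps$. The paper's proof is the same chain, merely written as a single displayed string of inequalities.
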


\begin{proof}
Lemma~\ref{lem_basic} gives $\|u-w\|_{X_\eta} \lesssim \sqrt{G_{0}(w)} + e_{\cons}{(u)}$,
and $G_{0}(w) \leq 2H_{0}(w,\vec{q})$ by \eqref{GH}.
%\wnew{The   rest of the asssertion follows from replacing $u$ by
%$\bar u_\eps$ as reference state.}
The second result follows from
\begin{align*}
& \sqrt{H_0(w,\vec{q})}+e_{\cons}(\bar{u}_\eps)
\leq
\sqrt{H_\eps(w,\vec{q})}+\sqrt{G_\eps(\bar{u}_\eps)}\\
&\quad \leq
\sqrt{H_\eps(w,\vec{q})}+\sqrt{2} \sqrt{H_\eps(\bar{u}_\eps,\vec{p}_\eps)}
\leq (1+\sqrt{2})\sqrt{H_\eps(w,\vec{q})}. \qedhere
\end{align*}
\end{proof}

The same arguments used to show for $\eps\geq 0$ existence and uniqueness of the minimizer $(\bar{u}_\eps,\vec{p}_\eps)$ of $H_\eps$ over $X \times Z$ show for any closed subspace $X^\delta \times Z^\delta \subset X \times Z$
uniqueness of the minimizer $(u^\delta_\eps,\vec{p}_\eps^\delta)$ of $H_\eps$ over $X^\delta \times Z^\delta$.
 An \emph{a priori bound} for  $\|u-\bar{u}_\eps^\delta\|_{X_\eta}$ for an arbitrary reference state $u \in X$ is given in the next proposition.

\begin{proposition}
\label{cor:4.2}
It holds that
$$
\|u-\bar{u}_\eps^\delta\|_{X_\eta} \lesssim e_{\cons} {(u)}+\bar{e}_{\ap}^\delta {(u)}+\eps \|\gamma_0 u\|_{L_2(\Omega)},
$$
where $\bar{e}_{\ap}^\delta(u):=\min_{(w,\vec{q}) \in X^\delta \times Z^\delta} \|u-w\|_X+\|K\nabla_{\bf x} u-\vec{q}\|_Z$.
\end{proposition}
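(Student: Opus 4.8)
The plan is to run the FOSLS analogue of the proof of Proposition~\ref{prop2}, with the $X$-orthogonal projector replaced by a (near-)minimizing pair for $\bar e^\delta_{\ap}(u)$, and with the elementary relations between $H_\eps$, $G_\eps$, $C$ and $B$ collected in Remark~\ref{rem:simple}. Since $(\bar u^\delta_\eps,\vec p^\delta_\eps)\in X^\delta\times Z^\delta\subset X\times Z$, the first assertion of Proposition~\ref{lem_basic3} gives
\begin{equation*}
\|u-\bar u^\delta_\eps\|_{X_\eta}\lesssim \sqrt{H_0(\bar u^\delta_\eps,\vec p^\delta_\eps)}+e_\cons(u)\leq \sqrt{H_\eps(\bar u^\delta_\eps,\vec p^\delta_\eps)}+e_\cons(u),
\end{equation*}
the last step being the pointwise bound $H_0\leq H_\eps$. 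By minimality of $(\bar u^\delta_\eps,\vec p^\delta_\eps)$ over $X^\delta\times Z^\delta$ one may replace $\sqrt{H_\eps(\bar u^\delta_\eps,\vec p^\delta_\eps)}$ by $\sqrt{H_\eps(w,\vec q)}$ for \emph{any} admissible $(w,\vec q)\in X^\delta\times Z^\delta$, and I would take $(w,\vec q)$ so that $\|u-w\|_X+\|K\nabla_{\bf x} u-\vec q\|_Z$ equals (or is arbitrarily close to) $\bar e^\delta_{\ap}(u)$.

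It then remains to bound $\sqrt{H_\eps(w,\vec q)}$ term by term. Using the boundedness of $K$ and $X\hookrightarrow L_2(I;H^1_0(\Omega))$, $\|\vec q-K\nabla_{\bf x} w\|_Z\leq \|K\nabla_{\bf x} u-\vec q\|_Z+\|K\|_{L_\infty}\|\nabla_{\bf x}(u-w)\|_Z\lesssim \bar e^\delta_{\ap}(u)$. By the ``more generally'' identity of Remark~\ref{rem:simple} with $\ell=g$, $\|C(w,\vec q)-g\|_{Y'}\leq \|Bw-g\|_{Y'}+\|\vec q-K\nabla_{\bf x} w\|_Z\leq \|B(w-u)\|_{Y'}+\|Bu-g\|_{Y'}+\|\vec q-K\nabla_{\bf x} w\|_Z\lesssim e_\cons(u)+\bar e^\delta_{\ap}(u)$, using $B\in\cL(X,Y')$ and $\|Bu-g\|_{Y'}\le e_\cons(u)$. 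Similarly $\|\Gamma_\omega w-f\|_{L_2(I\times\omega)}\leq \|\Gamma_\omega(w-u)\|_{L_2(I\times\omega)}+\|\Gamma_\omega u-f\|_{L_2(I\times\omega)}\lesssim \|u-w\|_X+e_\cons(u)$ by $\Gamma_\omega\in\cL(X,L_2(I\times\omega))$, and, $\gamma_0$ being bounded on $X$ by \eqref{embedding}, $\eps\|\gamma_0 w\|_{L_2(\Omega)}\leq \eps\|\gamma_0(w-u)\|_{L_2(\Omega)}+\eps\|\gamma_0 u\|_{L_2(\Omega)}\lesssim \bar e^\delta_{\ap}(u)+\eps\|\gamma_0 u\|_{L_2(\Omega)}$ (as in Proposition~\ref{prop2}). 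Summing, $\sqrt{H_\eps(w,\vec q)}\lesssim e_\cons(u)+\bar e^\delta_{\ap}(u)+\eps\|\gamma_0 u\|_{L_2(\Omega)}$, which combined with the first display proves the claim.

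I expect no genuine obstacle here: this is the FOSLS counterpart of Proposition~\ref{prop2}, and every ingredient (the $X_\eta$-bound of Proposition~\ref{lem_basic3}, the inequality $G_\eps\le 2H_\eps$ and its by-product \eqref{42}, and the boundedness of $C$, $B$, $\Gamma_\omega$, $\gamma_0$, $K$) is already available. The only place calling for a little care is the residual term, where Remark~\ref{rem:simple} must be invoked to reduce $C(u,K\nabla_{\bf x} u)$ to $Bu$ and to see that replacing the exact flux $K\nabla_{\bf x} u$ by $\vec q$ costs only ${\mathcal O}(\bar e^\delta_{\ap}(u))$; the rest are routine triangle inequalities together with continuity of the relevant operators.
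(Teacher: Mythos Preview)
Your proof is correct and follows essentially the same route as the paper: apply the first part of Proposition~\ref{lem_basic3} to $(\bar u^\delta_\eps,\vec p^\delta_\eps)$, use minimality of $H_\eps$ over $X^\delta\times Z^\delta$ to pass to a best-approximating pair, and then bound $\sqrt{H_\eps(w,\vec q)}$ term by term via triangle inequalities and the boundedness of $C$, $B$, $\Gamma_\omega$, $\gamma_0$, $K\nabla_{\bf x}$. The only cosmetic differences are that the paper takes the orthogonal projectors $(P_{X^\delta}u,P_{Z^\delta}K\nabla_{\bf x}u)$ in place of your near-minimizing pair, and estimates the $C$-residual directly via linearity and $C\in\cL(X\times Z,Y')$ rather than invoking the identity of Remark~\ref{rem:simple} to reduce to $B$; neither change affects the argument.
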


\begin{proof}
Let $P_{X^\delta}$ and  $P_{Z^\delta}$ denote the $X$- respectively $Z$-orthogonal   projector onto $X^\delta$ respectively $Z^\delta$. Then
we  have
\begin{align*}
\sqrt{H_\eps(P_{X^\delta} u,P_{Z^\delta} K \nabla_{\bf x} u)}&\leq
\|C(P_{X^\delta} u,P_{Z^\delta} K\nabla_{\bf x} u)-C(u,K\nabla_{\bf x} u)+Bu-g\|_{Y'}\\
&\quad+
 \|P_{Z^\delta} K \nabla_{\bf x}u-K \nabla_{\bf x} u+K \nabla_{\bf x}u-K \nabla_{\bf x}P_{X^\delta} u \|_{Z}\\
&\quad+
 \|\Gamma_\omega(P_{X^\delta} u-u)+\Gamma_\omega u-f\|_{L_2(I \times \omega)}+ \eps\|\gamma_0 P_{X^\delta} u\|_{L_2(\Omega)} \\
&\leq \|C(P_{X^\delta} u-u,P_{Z^\delta} K\nabla_{\bf x} u-K\nabla_{\bf x} u)\|_{Y'} + \|Bu-g\|_{Y'} \\
&\quad  + \|P_{Z^\delta} K \nabla_{\bf x}u-K \nabla_{\bf x} u\|_Z + \|K\nabla_{\bf x}(u- P_{X^\delta} u) \|_{Z} \\
& \quad  + \|\Gamma_\omega(P_{X^\delta} u-u)\|_{L_2(I \times \omega)}+\|\Gamma_\omega u-f\|_{L_2(I \times \omega)}\\
&\quad + \eps \|\gamma_0(u-P_{X^\delta} u)\|_{L_2(\Omega)} + \eps \|\gamma_0u \|_{L_2(\Omega)}\\
&\lesssim \eps{\|\gamma_0u\|_{L_2(\Omega)}}+e_{\cons}(u)+\bar{e}_{\ap}^\delta(u),
\end{align*}
where we have used $C \in \cL(X \times Z,Y')$, $K\nabla_{\bf x} \in \cL(Z,X)$, and \eqref{embedding}.
 Since by Proposition~\ref{lem_basic3}, $\|u-\bar{u}_\eps^\delta\|_{X_\eta} \lesssim \sqrt{H_\eps(\bar{u}_\eps^\delta,\vec{p}_\eps^\delta)} +e_{\cons} {(u)}$ and since, by definition,  $H_\eps(\bar{u}_\eps^\delta,\vec{p}_\eps^\delta) \leq H_\eps(P_{X^\delta} u,P_{Z^\delta} K \nabla_{\bf x} u)$,
  the proof is completed.
\end{proof}

 Since the definition of $H_\eps$ incorporates the dual norm $\|\cdot\|_{Y'}$ neither its minimizer $(\bar{u}_\eps^\delta,\vec{p}_\eps^\delta)$ over $X^\delta \times Z^\delta$ can be computed, nor the a posteriori error bound from Proposition~\ref{lem_basic3} can be evaluated. In the next subsection both problems will be tackled by discretizing this dual norm.

\begin{remark} \label{rem:KarkulikFuehrer} Our FOSLS formulation of the data-assimilation problem has been based on the fact that a well-posed FOSLS formulation of the initial-value problem \eqref{initial}, with $(A(t) \theta)(\zeta)=a(t;\theta,\zeta)$ of the form \eqref{special_a}, is given by
$$
 {\argmin_{(w,\vec{q}) \in X \times Z}
\|C(w,\vec{q})-g\|_{Y'}^2+
 \| \vec{q}-K \nabla_{\bf x} w\|_{Z}^2
+ \|\gamma_0 w-z_0\|_{L_2(\Omega)}^2},
$$
see \cite[Lem.~2.3 \new{and Rem.~2.4}]{243.85}. \new{Notice that with well-posedness we mean that $(w,\vec{q}) \mapsto(C(w,\vec{q}), \vec{q}-K \nabla_{\bf x} w, \gamma_0 w) \in \Lis(X \times Z,Y' \times Z \times L_2(\Omega))$.}
In the recent work \cite{75.257} it was shown that an alternative well-posed FOSLS formulation for this problem\footnote{The result given in \cite{75.257} for the heat equation immediately generalizes to the more general parabolic problem under consideration,  {see \cite{75.28}}. \new{Surjectivity of $(w,\vec{q}) \mapsto(C(w,\vec{q}), \vec{q}-K \nabla_{\bf x} w, \gamma_0 w)$ has also been shown in the latter work.}} is given by
$$
   {  \qquad\qquad\quad\argmin\limits_{\mathclap{\{ {(w,\vec{q}) \in X \times Z\colon}{ \partial_t w-\divv_x \vec{q} \in L_2(I;L_2(\Omega))}\}}}
\,\,\,\|C(w,\vec{q})-g\|_{L_2(I; L_2(\Omega))}^2 +
 \| \vec{q}-K \nabla_{\bf x} w\|_{Z}^2
+ \|\gamma_0 w-z_0\|_{L_2(\Omega)}^2.}
$$
Applying the latter formulation to the data-assimilation setting would offer the important advantage that there is no need to discretize the dual norm $\|\,\|_{Y'}$.
On the other hand, error estimates for such a formulation would be based on the estimate
$\|w\|_{X_\eta} \lesssim \| {\Gamma_\omega}w\|_{L_2(I \times \omega)}+\|Bw\|_{L_2(I; L_2(\Omega))}$,
which is a weaker version of the Carleman estimate
$\|w\|_{X_\eta} \lesssim \| {\Gamma_\omega}w\|_{L_2(I \times \omega)}+\|Bw\|_{Y'}$.
Furthermore, in view of an iterative solution process, a likely non-trivial issue is the development of optimal preconditioners for the space $\{(w,\vec{q}) \in X \times Z\colon \partial_t w-\divv_x \vec{q} \in L_2(I;L_2(\Omega))\}$ equipped with the graph norm.
\end{remark}

%\newpage
%%%%%%%%%%%%%%%%%%%%%%%%%%%%%%%%%%%%%%%%%
\subsection{Discretizing the dual norm}
Given a family of finite dimensional subspaces $(X^\delta \times Z^\delta)_{\delta \in \Delta}$ of $X \times Z$, for each $\delta \in \Delta$ we seek a finite dimensional subspace $\bar{Y}^\delta \subset Y$, with $\dim \bar{Y}^\delta \lesssim \dim X^\delta+\dim Z^\delta$, such that
 {in analogy to \eqref{inf-sup}}
\be \label{inf-sup2}
\inf_{\delta \in \Delta} \inf_{0 \neq (w,\vec{q}) \in X^\delta\times Z^\delta} \sup_{0 \neq \mu \in \bar{Y}^\delta} \frac{C(w,\vec{q})(\mu)}{\|C(w,\vec{q})\|_{Y'} \|\mu\|_Y}>0.
\ee

\begin{theorem} \label{thm10} Let \eqref{inf-sup2} be valid. For $(\bar{u}_\eps^{\delta,\delta},\vec{p}_\eps^{\delta,\delta})$ denoting the (unique) minimizer over $X^\delta \times Z^\delta$ of
\begin{align*}
&H^\delta_\eps  := (w,\vec{q}) \mapsto\\
&\|C(w,\vec{q})-g\|_{{\bar{Y}^\delta}'}^2+
 \|\vec{q}-K \nabla_{\bf x} w\|_{Z}^2
+
\|\Gamma_\omega w-f\|_{L_2(I \times \omega)}^2+ \eps^2\|\gamma_0 w\|_{L_2(\Omega)}^2,
\end{align*}
it holds that
$$
\|u-\bar{u}_\eps^{\delta,\delta}\|_{X_\eta} \lesssim e_{\cons} {(u)}+\bar{e}_\ap^\delta {(u)}+\eps\|\gamma_0 u\|_X.
$$
%\wnew{where the constant depends on $\eta$ and $\omega$ but not on $\eps\ge 0$.}
\end{theorem}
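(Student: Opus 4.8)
The plan is to mirror exactly the proof of Theorem~\ref{thm1}, replacing the second‑order operator $B$ by the FOSLS operator and the space $X$ by $X\times Z$. First I would rewrite the saddle‑point system associated with minimizing $H_\eps$ over $X\times Z$ in the block form
\[
S_\eps=\begin{bmatrix} D & C_\eps\\ C_\eps' & 0\end{bmatrix},
\]
where now $D$ is the Riesz isomorphism of $Y\times Z\times L_2(I\times\omega)\times L_2(\Omega)$ onto its dual, and $C_\eps\in\cL\big(X\times Z,\,Y'\times Z\times L_2(I\times\omega)\times L_2(\Omega)\big)$ is given by
\[
C_\eps(w,\vec q)(\mu,\vec z,\theta,\nu):=C(w,\vec q)(\mu)+\langle \vec q-K\nabla_{\bf x}w,\vec z\rangle_Z+\langle\Gamma_\omega w,\theta\rangle_{L_2(I\times\omega)}+\eps\langle\gamma_0 w,\nu\rangle_{L_2(\Omega)}.
\]
Equipping $X\times Z$ with the energy norm $\nrm(w,\vec q)\nrm_\eps:=\sqrt{H_\eps(w,\vec q)}$ (with $g=f=0$), one checks that $C_\eps$ is a norm‑isometry onto its range, hence satisfies an inf‑sup condition, so $S_\eps\in\Lis$ uniformly in $\eps$ between $Y\times Z\times L_2(I\times\omega)\times L_2(\Omega)\times(X\times Z,\nrm\cdot\nrm_\eps)$ and its dual. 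The Galerkin discretization uses $\bar Y^\delta\times Z^\delta\times L_2(I\times\omega)\times L_2(\Omega)\times(X^\delta\times Z^\delta)$; the LBB condition for it reduces, via the supremum computation as in Theorem~\ref{thm1}, precisely to the inf‑sup hypothesis \eqref{inf-sup2}, because the $Z$‑, $L_2(I\times\omega)$‑ and $L_2(\Omega)$‑components are not discretized (matched exactly). This gives uniform stability and hence the quasi‑optimality estimate
\[
\nrm(\bar u_\eps,\vec p_\eps)-(\bar u_\eps^{\delta,\delta},\vec p_\eps^{\delta,\delta})\nrm_\eps\lesssim \|\mu_\eps\|_Y+\min_{(\tilde w,\tilde q)\in X^\delta\times Z^\delta}\nrm(\bar u_\eps,\vec p_\eps)-(\tilde w,\tilde q)\nrm_\eps,
\]
with $\mu_\eps=R^{-1}(g-C(\bar u_\eps,\vec p_\eps))$ so that $\|\mu_\eps\|_Y=\|g-C(\bar u_\eps,\vec p_\eps)\|_{Y'}\le\sqrt{H_0(\bar u_\eps,\vec p_\eps)}$.

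Next I would pass from the energy norm to $X_\eta$. Using \eqref{GH}, namely $G_0(w)\le 2H_0(w,\vec q)$, together with the Carleman estimate \eqref{carleman} exactly as in Lemma~\ref{lem_basic} and Proposition~\ref{lem_basic3}, one gets $\|u-\bar u_\eps^{\delta,\delta}\|_{X_\eta}\lesssim\sqrt{H_0(\bar u_\eps^{\delta,\delta},\vec p_\eps^{\delta,\delta})}+e_\cons(u)$, and $\sqrt{H_0}\le\nrm\cdot\nrm_\eps+\,$(data terms). Then I bound $\nrm(\bar u_\eps,\vec p_\eps)-(\bar u_\eps^{\delta,\delta},\vec p_\eps^{\delta,\delta})\nrm_\eps$ by the quasi‑optimality estimate above, insert the reference state by a triangle inequality, use $\nrm\cdot\nrm_\eps\lesssim\|\cdot\|_X+\|\cdot\|_Z$ to replace the energy‑norm best approximation over $X^\delta\times Z^\delta$ by $\bar e_\ap^\delta(u)$ (choosing the competitor $(P_{X^\delta}u,P_{Z^\delta}K\nabla_{\bf x}u)$ exactly as in Proposition~\ref{cor:4.2}, noting $C\in\cL(X\times Z,Y')$ and $K\nabla_{\bf x}\in\cL(Z,X)$), and finally bound the remaining terms $\|g-C(\bar u_\eps,\vec p_\eps)\|_{Y'}+\nrm(u-\bar u_\eps,K\nabla_{\bf x}u-\vec p_\eps)\nrm_\eps$ by $3\sqrt{H_\eps(u,K\nabla_{\bf x}u)}\le 3\big(e_\cons(u)+\eps\|\gamma_0 u\|_{L_2(\Omega)}\big)$, using $H_\eps(u,K\nabla_{\bf x}u)=G_\eps(u)$ from \eqref{GH}. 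Collecting the estimates yields the claim, with the constant depending on $\omega$ and $\eta$ but not on $\eps$ or $\delta$.

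The only genuinely new bookkeeping, compared to Theorem~\ref{thm1}, is verifying that the LBB condition for the enlarged saddle‑point system is equivalent to \eqref{inf-sup2} — i.e.\ that appending the exactly‑discretized components $\vec q-K\nabla_{\bf x}w$, $\Gamma_\omega w$, $\eps\gamma_0 w$ to $C(w,\vec q)$ does not destroy the inf‑sup constant. This is the step I expect to require the most care: one must show
\[
\sup_{0\neq(\tilde\mu,\tilde z,\tilde\theta,\tilde\nu)}\frac{C(w,\vec q)(\tilde\mu)+\langle\vec q-K\nabla_{\bf x}w,\tilde z\rangle_Z+\langle\Gamma_\omega w,\tilde\theta\rangle+\eps\langle\gamma_0 w,\tilde\nu\rangle}{\sqrt{\|\tilde\mu\|_Y^2+\|\tilde z\|_Z^2+\|\tilde\theta\|^2+\|\tilde\nu\|^2}}\eqsim\nrm(w,\vec q)\nrm_\eps
\]
for all $(w,\vec q)\in X^\delta\times Z^\delta$, which follows by optimizing each summand separately: the last three suprema are attained exactly (those components sit in the full spaces), contributing $\|\vec q-K\nabla_{\bf x}w\|_Z^2+\|\Gamma_\omega w\|^2+\eps^2\|\gamma_0 w\|^2$, while the first summand contributes $\eqsim\|C(w,\vec q)\|_{Y'}^2$ precisely by \eqref{inf-sup2}. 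Everything else is a routine transcription of the arguments already carried out for the second‑order formulation.
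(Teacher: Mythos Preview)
Your proposal is correct and follows essentially the same route as the paper's proof, which also proceeds by transcribing the saddle-point argument from Theorem~\ref{thm1} to the FOSLS setting, deriving the analogue of \eqref{40}, and then bounding $\|g-C(\bar u_\eps,\vec p_\eps)\|_{Y'}+\nrmm(u,K\nabla_{\bf x}u)-(\bar u_\eps,\vec p_\eps)\nrmm_\eps\le 3\sqrt{H_\eps(u,K\nabla_{\bf x}u)}=3\sqrt{G_\eps(u)}$. One small slip: in your first paragraph you list the discrete trial space for the dual variables as $\bar Y^\delta\times Z^\delta\times L_2(I\times\omega)\times L_2(\Omega)$, but your LBB verification (correctly) treats $\tilde z$ as ranging over the full space $Z$; the latter is what is needed and what the paper does---with $\tilde z\in Z^\delta$ the supremum over $\tilde z$ would not in general recover $\|\vec q-K\nabla_{\bf x}w\|_Z$.
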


\begin{proof} Equipping $X \times Z$ with ``energy''-norm
$$
\nrmm (w,\vec{q})\nrmm_\eps:= \sqrt{
\|C(w,\vec{q})\|_{Y'}^2+\|\vec{q}-K \nabla_{\bf x} u\|_Z^2+\|\Gamma_\omega w\|_{L_2(I \times \omega)}^2+ \eps^2 \|\gamma_0 w\|_{L_2(\Omega)}^2,
}
$$
analogously to the proof of Theorem~\ref{thm1}, in particular following
the same reasoning that leads to \eqref{40}, one concludes that
\begin{align}
\label{analog}
\nrmm (\bar{u}_\eps,\vec{p}_\eps)-(\bar{u}^{\delta,\delta}_\eps,\vec{p}^{\delta,\delta}_\eps)\nrmm_\eps
&\lesssim \|g -  C(\bar{u}_\eps,\vec{p}_\eps)\|_{Y'} \nonumber\\
&\qquad\quad +\min_{(w,\vec{q}) \in X^\delta \times Z^\delta} \nrmm (\bar{u}_\eps,\vec{p}_\eps) -(w,\vec{q})\nrmm_\eps .
\end{align}

From \eqref{carleman}, the triangle-inequality, and \eqref{42} we have
\begin{align*}
\|u-\bar{u}_\eps^{\delta,\delta}\|_{X_\eta} &\lesssim
\nrm u-\bar{u}_\eps\nrm_\eps
+
\nrm \bar{u}_\eps-\bar{u}_\eps^{\delta,\delta}\nrm_\eps\\
& \leq \sqrt{2} (\nrmm (u,K \nabla_{\bf x} u) -(\bar{u}_\eps,\vec{p}_\eps)\nrmm_\eps+
\nrmm (\bar{u}_\eps,\vec{p}_\eps)-(\bar{u}^{\delta,\delta}_\eps,\vec{p}^{\delta,\delta}_\eps)\nrmm_\eps )
\end{align*}
From \eqref{analog} one infers
\begin{align*}
 \nrmm (\bar{u}_\eps,\vec{p}_\eps)-(\bar{u}^{\delta,\delta}_\eps,&\vec{p}^{\delta,\delta}_\eps)\nrmm_\eps
\\
&\lesssim
\|g -  C(\bar{u}_\eps,\vec{p}_\eps)\|_{Y'} +
\nrmm (u,K \nabla_{\bf x} u) -(\bar{u}_\eps,\vec{p}_\eps)\nrmm_\eps+\bar{e}_\ap(u),
\end{align*}
 where we have used that $\nrmm\,\nrmm_\eps \lesssim \|\,\|_{X \times Z}$.
An application of a triangle-inequality for the norm $\sqrt{\|\,\|_{Y'}^2+\|\,\|_Z^2+\|\,\|_{L_2(I\times\omega)}^2+\eps^2\|\,\|_{L_2(\Omega)}^2}$ gives
\begin{align*}
\|g - & C(\bar{u}_\eps,\vec{p}_\eps)\|_{Y'} +
\nrmm (u,K \nabla_{\bf x} u) -(\bar{u}_\eps,\vec{p}_\eps)\nrmm_\eps
\\
&\leq \sqrt{H_0(\bar{u}_\eps,\vec{p}_\eps)}+\sqrt{H_\eps(u,K \nabla_{\bf x} u)}+\sqrt{H_\eps(\bar{u}_\eps,\vec{p}_\eps)}
\\
&\leq 3\sqrt{H_\eps(u,K \nabla_{\bf x} u)}
=3\sqrt{G_\eps(u)} \leq 3 (e_\cons(u)+\eps\|\gamma_0 u\|_{L_2(\Omega)}).
\end{align*}
By combining these estimates from the last three displayed fomulas the proof is completed.
\end{proof}

Similar to Sect.~\ref{Sapost}, a necessary and sufficient condition for \eqref{inf-sup2} to hold is the existence of a family of
 uniformly bounded Fortin interpolators, i.e.,
\be \label{fortin2}
\bar{Q}^\delta \in \cL(Y, {\bar{Y}}^\delta), \quad (C (X^\delta \times Z^\delta))\big((\identity - \bar{Q}^\delta)Y\big)=0,\quad
\sup_{\delta \in \Delta} \|\bar{Q}^\delta\|_{\cL(Y,Y)}<\infty.
\ee

Similar to Corollary~\ref{corol1}, we have the following a posteriori error bound.

\begin{proposition} \label{prop11} Let $(X^\delta,Z^\delta,  {\bar{Y}^\delta})_{\delta \in \Delta} \subset X \times Z \times Y$ be such that \eqref{inf-sup2} is satisfied, and let $(\bar{Q}^\delta)_{\delta \in \Delta}$ be a corresponding family of Fortin interpolators as in \eqref{fortin2}. Then with
$$
\bar{e}_\osc^\delta {(g)}:=\|(\identity-{\bar{Q}^\delta}')g \|_{Y'},
$$
for any $(w,\vec{q}) \in X^\delta \times Z^\delta$ it holds that
$$
\|\bar{u}_\eps-w\|_{X_\eta} \lesssim \sqrt{H_\eps^\delta(w,\vec{q})}+ {\bar{e}^\delta_\osc(g)}.
$$
\end{proposition}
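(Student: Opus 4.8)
The plan is to mirror the second-order argument of Corollary~\ref{corol1}: combine the a posteriori bound already provided by Proposition~\ref{lem_basic3}, namely $\|\bar{u}_\eps-w\|_{X_\eta} \lesssim \sqrt{H_\eps(w,\vec{q})}$, valid for \emph{any} $(w,\vec{q})\in X\times Z$, with a FOSLS version of Lemma~\ref{lem10} that estimates the non-computable functional $H_\eps$ by its discrete counterpart $H_\eps^\delta$ up to the data-oscillation term $\bar{e}_\osc^\delta(g)$. Since $\bar{e}_\osc^\delta(g)$ already appears in the claimed bound, the only thing to establish is this comparison between $H_\eps$ and $H_\eps^\delta$ on the discrete space.

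Concretely, I would show that for $(w,\vec{q})\in X^\delta\times Z^\delta$ one has $\sqrt{H_\eps(w,\vec{q})}\lesssim \sqrt{H_\eps^\delta(w,\vec{q})}+\bar{e}_\osc^\delta(g)$. The point is that $H_\eps$ and $H_\eps^\delta$ differ only in their first summand, $\|C(w,\vec{q})-g\|_{Y'}^2$ versus $\|C(w,\vec{q})-g\|_{{\bar{Y}^\delta}'}^2$, while the three terms $\|\vec{q}-K\nabla_{\bf x} w\|_Z^2$, $\|\Gamma_\omega w-f\|_{L_2(I\times\omega)}^2$ and $\eps^2\|\gamma_0 w\|_{L_2(\Omega)}^2$ are literally identical in both functionals; so it suffices to control $\|C(w,\vec{q})-g\|_{Y'}$. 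The Fortin property $(C(X^\delta\times Z^\delta))((\identity-\bar{Q}^\delta)Y)=0$ in \eqref{fortin2} says ${\bar{Q}^\delta}'C(w,\vec{q})=C(w,\vec{q})$ for $(w,\vec{q})\in X^\delta\times Z^\delta$, hence
$$
C(w,\vec{q})-g = {\bar{Q}^\delta}'\big(C(w,\vec{q})-g\big)-(\identity-{\bar{Q}^\delta}')g,
$$
and a triangle inequality gives $\|C(w,\vec{q})-g\|_{Y'}\le \|{\bar{Q}^\delta}'(C(w,\vec{q})-g)\|_{Y'}+\bar{e}_\osc^\delta(g)$. Exactly as in the proof of Lemma~\ref{lem10}, because $\ran\bar{Q}^\delta\subset\bar{Y}^\delta$ one has, for every $\ell\in Y'$, $\|{\bar{Q}^\delta}'\ell\|_{Y'}=\sup_{0\neq v\in Y}\frac{\ell(\bar{Q}^\delta v)}{\|v\|_Y}\le \|\bar{Q}^\delta\|_{\cL(Y,Y)}\,\|\ell\|_{{\bar{Y}^\delta}'}$; applying this with $\ell=C(w,\vec{q})-g$ and invoking the uniform bound on $\|\bar{Q}^\delta\|_{\cL(Y,Y)}$ from \eqref{fortin2} yields $\|C(w,\vec{q})-g\|_{Y'}\lesssim \|C(w,\vec{q})-g\|_{{\bar{Y}^\delta}'}+\bar{e}_\osc^\delta(g)$, and the desired estimate on $\sqrt{H_\eps(w,\vec{q})}$ follows by squaring, adding the three common terms, and taking square roots again.

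Putting the two pieces together, for $(w,\vec{q})\in X^\delta\times Z^\delta$,
$$
\|\bar{u}_\eps-w\|_{X_\eta}\lesssim \sqrt{H_\eps(w,\vec{q})}\lesssim \sqrt{H_\eps^\delta(w,\vec{q})}+\bar{e}_\osc^\delta(g),
$$
which is the assertion. I do not anticipate a genuine obstacle here: the argument is a direct transcription of the second-order case (Lemma~\ref{lem10} combined with Proposition~\ref{prop1}/\ref{lem_basic3}), and the only point requiring a bit of care is the bookkeeping observation that the three non-dual-norm summands of $H_\eps$ and $H_\eps^\delta$ agree verbatim, so that the entire discrepancy is carried by the single term $\|C(w,\vec q)-g\|_{Y'}$ to which the Fortin operator $\bar{Q}^\delta$ applies.
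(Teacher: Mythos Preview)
Your proof is correct and follows essentially the same approach as the paper. The paper's proof is a one-line invocation of the estimate $\|C(w,\vec{q})-g\|_{Y'} \leq \|\bar{Q}^\delta\|_{\cL(Y,Y)}\|C(w,\vec{q})-g\|_{{\bar{Y}^\delta}'}+\bar{e}^\delta_\osc(g)$ combined with Proposition~\ref{lem_basic3}; you spell out this estimate in full (as a FOSLS analogue of Lemma~\ref{lem10}) and then apply Proposition~\ref{lem_basic3}, which is precisely the intended argument.
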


\begin{proof} From $\|C(w,\vec{q})-g\|_{Y'} \leq \|{\bar{Q}^\delta}\|_{\cL(Y,Y)}\|C(w,\vec{q})-g\|_{{\bar{Y}^\delta}'}+\bar{e}^\delta_\osc(g)$
and Proposition~\ref{lem_basic3} the proof follows.
\end{proof}

Bearing the a priori error bound from Theorem~\ref{thm10} in mind,
 this result  shows that a desirable additional property of the sequence of spaces $(\bar{Y}^\delta)_{\delta \in \Delta},$
  associated with a given sequence
 of trial spaces  $(X^\delta \times Z^\delta)_{\delta \in \Delta}$,  gives rise to Fortin interpolators $\bar{Q}^\delta$, as in \eqref{fortin2},
 warranting   for sufficiently smooth $g$
$$
\bar{e}_\osc^\delta {(g)} = {\mathcal O}(\bar{e}_\ap^\delta {(u)}), \text{ or even } \bar{e}_\osc^\delta {(g)} = o(\bar{e}_\ap^\delta) {(u)}.
$$

We conclude by remarking that, in analogy to the second order formulation,  condition \eqref{inf-sup2}
is sufficient for the well-posedness of the corresponding forward problem, and  gives in addition an a posteriori error bound.

\begin{remark} Concerning the \emph{initial-value problem} \eqref{initial}, if \eqref{inf-sup2} is satisfied, then
$$
 {\argmin_{(w,\vec{q}) \in X^\delta \times Z^\delta}
\|C(w,\vec{q})-h\|_{{\bar{Y}^\delta}'}^2+
 \|\vec{q}-K \nabla_{\bf x} w\|_{Z}^2
+\|\gamma_0 w-z_0\|_{L_2(\Omega)}^2}
$$
is a quasi-best approximation to $(z,K \nabla_{\bf x} z) \in X \times Z$ from $X^\delta \times Z^\delta$, and
for any $(w,\vec{q}) \in X^\delta \times Z^\delta$, it holds
\begin{align*}
&\sqrt{\|C(w,\vec{q})-h\|_{{\bar{Y}^\delta}'}^2+
 \|\vec{q}-K \nabla_{\bf x} w\|_{Z}^2
+\|\gamma_0 w-z_0\|_{L_2(\Omega)}^2}  \\
&\quad\lesssim \|z-w\|_X + \|K \nabla_{\bf x} z-\vec{q}\|_Z \\
&\quad\lesssim
\sqrt{\|C(w,\vec{q})-h\|_{{\bar{Y}^\delta}'}^2+
 \|\vec{q}-K \nabla_{\bf x} w\|_{Z}^2
+\|\gamma_0 w-z_0\|_{L_2(\Omega)}^2}
+ \bar{e}^\delta_\osc(h).
\end{align*}
\end{remark}

%\newpage
%%%%%%%%%%%%%%%%%%%%
\subsection{Numerical Solution of the Discrete Problem} \label{Sprac2}
Recalling the Riesz operator $R \in \Lis(Y,Y')$, $\bar{u}_\eps^{\delta,\delta}$ can be practically computed as the second component of the solution
$(\lambda_\eps^{\delta,\delta},\bar{u}_\eps^{\delta,\delta},\vec{p}_\eps^{\delta,\delta}) \in \bar{Y}^\delta\times X^\delta \times Z^\delta$ of the linear system
$$
\left(\left[
\begin{array}{@{}ccc@{}}
R &
C_u &
C_{\vec{p}} \\
C_u' & - (\nabla_{\bf x}' K^2 \nabla_{\bf x} \!+\Gamma_\omega' \Gamma_\omega\!+\!\eps^2\gamma_0'\gamma_0)
&  \nabla_{\bf x}' K  \\
C_{\vec{p}}' & K \nabla_{\bf x}  &-\identity
\end{array}
\right]
\left[
\begin{array}{@{}c@{}} \lambda_\eps^{\delta,\delta} \\ \bar{u}_\eps^{\delta,\delta} \\ \vec{p}_\eps^{\delta,\delta}
\end{array}
\right]
-
\left[
\begin{array}{@{}c@{}} g \\ - \Gamma_\omega' f \\0
\end{array}
\right]
\right)
\left[
\begin{array}{@{}c@{}} \tilde{\lambda} \\ \tilde{u} \\ \tilde{\vec{p}}
\end{array}
\right]=0,
$$
($(\tilde{\lambda},\tilde{u},\tilde{\vec{p}}) \in \bar{Y}^\delta\times X^\delta \times Z^\delta$),
where for $C(\cdot,\cdot)$ defined by \eqref{Cwq},
$(C_u w)(v):=C(w,0)(v)$ and $(C_{\vec{p}} q)(v):=C(0,q)(v)$.

With ordered bases $\Phi^{\bar{Y}^\delta}$, $\Phi^{X^\delta}$, and $\Phi^{Z^\delta}$ for $\bar{Y}^\delta$, $X^\delta$, and $Z^\delta$, and the previously used or otherwise obvious notations
$\bm{\lambda}_\eps^{\delta,\delta}$,  $\bar{\bm{u}}_\eps^{\delta,\delta}$, $\vec{\bm{p}}_\eps^{\delta,\delta}$,
${\bar{\bm{g}}^\delta=g(\Phi^{\bar{Y}^\delta})}$, ${\bm{f}}^\delta_\omega$,
$\bm{R}^\delta$, $\bm{C}^\delta_u$, $\bm{C}^\delta_{\vec{p}}$, $\bm{M}_{\Gamma_\omega}^\delta$, and $\bm{M}_{\gamma_0}^\delta$,
and $\bm{J}^\delta:=\langle K \Phi^{Z^\delta}, \nabla_x \Phi^{X^\delta}\rangle_{L_2(\Omega)^d}$,
$\bm{L}^\delta:=\langle K \nabla_{\bf x} \Phi^{X^\delta}, K \nabla_{\bf x} \Phi^{X^\delta}\rangle_{L_2(\Omega)^d}$,
and $\bm{N}^\delta:=\langle \Phi^{Z^\delta}, \Phi^{Z^\delta}\rangle_{L_2(\Omega)^d}$, one finds $(\bm{\lambda}_\eps^{\delta,\delta},\bar{\bm{u}}_\eps^{\delta,\delta},\vec{\bm{p}}_\eps^{\delta,\delta})$ as the solution of
\be \label{DS2nd}
\left[\begin{array}{@{}ccc@{}}
\bm{R}^\delta & \bm{C}^\delta_u & \bm{C}^\delta_{\vec{p}}\\
{\bm{C}^\delta_u}^\top & -(\bm{L}^\delta+\bm{M}_{\Gamma_\omega}^\delta+\eps^2 \bm{M}_{\gamma_0}^\delta) & \bm{J}^\delta\\
{\bm{C}^\delta_{\vec{p}}}^\top & {\bm{J}^\delta}^\top & -\bm{N}^\delta
\end{array}
\right]
\left[\begin{array}{@{}c@{}}
\bm{\lambda}_\eps^{\delta,\delta} \\ \bar{\bm{u}}_\eps^{\delta,\delta} \\ \vec{\bm{p}}_\eps^{\delta,\delta}
\end{array}
\right]
=
\left[\begin{array}{@{}c@{}}
{\bar{\bm{g}}}^\delta \\
-{\bm{f}}^\delta_\omega\\
0
\end{array}
\right].
\ee
Similarly as in Sect.~\ref{Sprac1}, one expresses the a posteriori error bound $\sqrt{H_\eps(\bar{u}_\eps^{\delta,\delta},\vec{p}_\eps^{\delta,\delta})}$ (modulo {$\bar{e}^\delta_\osc(g)$}) in terms of the vectors $\bm{\lambda}_\eps^{\delta,\delta}$, $\bar{\bm{u}}_\eps^{\delta,\delta}$, and $\vec{\bm{p}}_\eps^{\delta,\delta}$.

As in Sect.~\ref{Sprac1}, in the above system we replace $\bm{R}^\delta$ by a uniform preconditioner ${(\bm{K}_Y^\delta)^{-1}}$,
whilst keeping the same notation for the resulting solution vector and corresponding function in $\bar{Y}^\delta\times X^\delta \times Z^\delta$, and apply Preconditioned Conjugate Gradients to
the symmetric positive definite Schur complement system
\be \label{Schur1storder}
\underbrace{\left[\begin{array}{@{}cc@{}}
\bm{L}^\delta+\bm{M}_{\Gamma_\omega}^\delta+\eps^2 \bm{M}_{\gamma_0}^\delta +{\bm{C}^\delta_u}^\top {\bm{K}_Y^\delta} \bm{C}^\delta_u &
{\bm{C}^\delta_u}^\top  {\bm{K}_Y^\delta} \bm{C}^\delta_{\vec{p}}-\bm{J}^\delta\\
{\bm{C}^\delta_{\vec{p}}}^\top  {\bm{K}_Y^\delta} \bm{C}^\delta_u-{\bm{J}^\delta}^\top &
\bm{N}^\delta+{\bm{C}^\delta_{\vec{p}}}^\top  {\bm{K}_Y^\delta} \bm{C}^\delta_{\vec{p}}
\end{array}
\right]}_{\bm{H}_\eps^\delta:=}
\left[\begin{array}{@{}c@{}}
\bar{\bm{u}}_\eps^{\delta,\delta} \\ \vec{\bm{p}}_\eps^{\delta,\delta}
\end{array}
\right]
=
\left[\begin{array}{@{}c@{}}
{\bm{f}}^\delta_\omega+\bm{C}_u^\top {\bm{K}_Y^\delta} {\bar{\bm{g}}}^\delta\\
\bm{C}_{\vec{p}}^\top {\bm{K}_Y^\delta} {\bar{\bm{g}}}^\delta
\end{array}
\right].
\ee
 {With $\bm{K}^\delta_X$ from Sect.~\ref{Sprac1}, and ${\bm{K}^\delta_Z}$ being spectrally equivalent to the inverse of the mass matrix of $\Phi^{Z^\delta}$, the eigenvalues of the preconditioned system
$\left[\begin{array}{@{}cc@{}}
\bm{K}^\delta_X & 0 \\ 0 & \bm{K}^\delta_Z
\end{array}
\right] \bm{H}_\eps^\delta$ are bounded from above and below, up to constant factors, by $\max(1,\eps^2)$ and $\eps^2$, respectively}.

For $\tilde{\bm{u}}_\eps^{\delta,\delta} \approx \bm{u}_\eps^{\delta,\delta}$,
$\tilde{\vec{\bm{p}}}_\eps^{\delta,\delta} \approx \vec{\bm{p}}_\eps^{\delta,\delta}$,
with $\bm{e}_u:= \bm{u}_\eps^{\delta,\delta}-\tilde{\bm{u}}_\eps^{\delta,\delta}$,
$\bm{e}_{\vec{p}}:= \vec{\bm{p}}_\eps^{\delta,\delta}-\tilde{\vec{\bm{p}}}_\eps^{\delta,\delta}$,
%$\bm{r}:= \bm{G}_\eps^\delta (\bm{u}_\eps^{\delta,\delta}-\tilde{\bm{u}}_\eps^{\delta,\delta})$,
$e_u:=(\bm{e}_u)^\top \Phi^{X^\delta}$,
$\vec{e}_{\vec{p}}:=(\bm{e}_{\vec{p}})^\top \Phi^{Z^\delta}$, we apply \eqref{carleman} and the arguments from the proof of Proposition~\ref{lem_basic3} to obtain
\begin{align*}
\|e_u\|^2_{X_\eta} & \lesssim \|\Gamma_\omega e_u\|^2_{L_2(I \times \omega)}+\|B e_u\|^2_{Y'}\\
& \leq \|\Gamma_\omega e_u\|^2_{L_2(I \times \omega)}+\|C(e_u,\vec{e}_{\vec{p}})\|^2_{Y'}+\|\vec{e}_{\vec{p}}-K\nabla_x e_u\|^2_X\\
& \eqsim
\|\Gamma_\omega e_u\|^2_{L_2(I \times \omega)}+\|C(e_u,\vec{e}_{\vec{p}})\|^2_{{\bar{Y}^\delta}'}+\|\vec{e}_{\vec{p}}-K\nabla_x e_u\|^2_X\\
& \leq
\|\Gamma_\omega e_u\|^2_{L_2(I \times \omega)}+\|C(e_u,\vec{e}_{\vec{p}})\|^2_{{\bar{Y}^\delta}'}+\|\vec{e}_{\vec{p}}-K\nabla_x e_u\|^2_X+\eps^2\|\gamma_0e_u\|^2_{L_2(\omega)}\\
& \eqsim \left\langle\bm{H}_{ {\eps}}^\delta \left[\begin{array}{@{}c@{}}
\bm{e}_u \\ \vec{\bm{e}}_{\vec{p}}
\end{array}
\right],
\left[\begin{array}{@{}c@{}}
\bm{e}_u \\ \vec{\bm{e}}_{\vec{p}}
\end{array}
\right]
\right\rangle,
\end{align*}
where the last ``$\eqsim$''-symbol reads as an equality for ${(\bm{K}_Y^\delta)^{-1}} =\bm{R}^\delta$.

For the residuals
$\left[\begin{array}{@{}c@{}}
\bm{r}_u \\ \vec{\bm{r}}_{\vec{p}}
\end{array}
\right]:=
\left[\begin{array}{@{}c@{}}
{\bm{f}}^\delta_\omega+\bm{C}_u^\top  {\bm{K}^\delta_Y} {\bar{\bm{g}}}^\delta\\
\bm{C}_{\vec{p}}^\top  {\bm{K}^\delta_Y} {\bar{\bm{g}}}^\delta
\end{array}
\right]-\bm{H}_{ {\eps}}^\delta \left[\begin{array}{@{}c@{}}
\bm{e}_u \\ \vec{\bm{e}}_{\vec{p}}
\end{array}
\right]$ it then holds that
$$
\max(1,\eps^2)^{-1} \left\langle \left[\begin{array}{@{}c@{}}
\bm{r}_u \\ \vec{\bm{r}}_{\vec{p}}
\end{array}
\right],
\left[\begin{array}{@{}c@{}}
 {\bm{K}^\delta_X} \bm{r}_u \\  {\bm{K}^\delta_Z} \vec{\bm{r}}_{\vec{p}}
\end{array}
\right]
\right\rangle
\lesssim
\left\langle\bm{H}_\eps^\delta \left[\begin{array}{@{}c@{}}
\bm{e}_u \\ \vec{\bm{e}}_{\vec{p}}
\end{array}
\right],
\left[\begin{array}{@{}c@{}}
\bm{e}_u \\ \vec{\bm{e}}_{\vec{p}}
\end{array}
\right]
\right\rangle
\lesssim \eps^{-2}
\left\langle \left[\begin{array}{@{}c@{}}
\bm{r}_u \\ \vec{\bm{r}}_{\vec{p}}
\end{array}
\right],
\left[\begin{array}{@{}c@{}}
 {\bm{K}^\delta_X} \bm{r}_u \\  {\bm{K}^\delta_Z} \vec{\bm{r}}_{\vec{p}}
\end{array}
\right]
\right\rangle,
$$
uniformly in $\delta$.

\begin{remark}%\mnote{to make it more visible}
\label{rem:stopH}
A reasonable stopping criterion can be determined by the same reasoning
as used in Section \ref{Sprac1}. Ignoring again data oscillation we use the
a posteriori bound from Proposition \ref{prop11} to see whether the pair
$(\tilde{\bm{u}}_\eps^{\delta,\delta},\tilde{\vec{\bm{p}}}_\eps^{\delta,\delta})$
is sufficiently close to $( {\bm{u}}_0^{\delta,\delta}, {\vec{\bm{p}}}_0^{\delta,\delta})$. Specifically, we stop the iteration as soon as
$$
\left\langle \left[\begin{array}{@{}c@{}}
\bm{r}_u \\ \vec{\bm{r}}_{\vec{p}}
\end{array}
\right],
\left[\begin{array}{@{}c@{}}
 {\bm{K}^\delta_X} \bm{r}_u \\  {\bm{K}^\delta_Z} \vec{\bm{r}}_{\vec{p}}
\end{array}
\right]
\right\rangle\le \eps^2 H_0^\delta(\tilde{\bm{u}}_\eps^{\delta,\delta},\tilde{\vec{\bm{p}}}_\eps^{\delta,\delta}).
$$
\end{remark}

%\newpage

\section{Construction of a suitable Fortin interpolator}\label{sec:Fortin}
%%%%%%%%%%%%%%%%%%%%%%%%%%%%%%%%%%%%%%
\newcommand{\cI}{\mathcal{I}}
%%%%%%%%%%%%%%%%%%%%
The spaces $X^\delta$ and $Y^\delta$, or $X^\delta$, $Z^\delta$ and $\bar{Y}^\delta$, that we are going to employ, will be finite element spaces w.r.t.~a partition of the  time-space cylinder into `time slabs' with each time-slab being partitioned into prismatic elements.
As a preparation for the derivation of a suitable Fortin interpolator for both the standard second order formulation from \S\ref{Sregularized} and the first order order formulation from \S\ref{SFOSLS}, we start with constructing certain biorthogonal projectors acting on the spatial domain.

\subsection{Construction of auxiliary biorthogonal projectors} \label{Saux}
Let $(\tria^\delta)_{\delta \in \Delta}$,  $(\tria_S^\delta)_{\delta \in \Delta}$ be a families of conforming, uniformly shape regular partitions of $\overline{\Omega} \subset \R^d$ into, say, closed $d$-simplices,
where $\tria_S^\delta$ is a refinement of $\tria^\delta$  (denoted by $\tria^\delta \prec \tria_S^\delta$)  of some \emph{fixed} maximal depth in the sense that $|T| \gtrsim |T'|$ for
$\tria^\delta_S \ni T \subset T' \in \tria^\delta$. Thus, one still has  $\dim \tria^\delta_S \lesssim \dim \tria^\delta$. On the other hand, setting
$$
\sigma:=\sup_{\delta \in \Delta} \sup_{T' \in \tria^\delta} \sup_{\{T \in \tria_S^\delta\colon T \subset T'\}} {\textstyle \frac{|T|}{|T'|}},
$$
we will assume that this constant is sufficiently small so that the refinement is sufficiently fine.

Thanks to the conformity and the uniform shape regularity, for
$d > 1$ we know that any adjacent $T,T' \in \tria^\delta$ (or $\tria_S^\delta$) with $T \cap T' \neq \emptyset$ have uniformly
comparable sizes. For $d = 1$, we impose this uniform `K-mesh property' explicitly.

Given a conforming partition $\tria$ of $\overline{\Omega}$ into closed $d$-simplices, we define $\mathcal{S}^{-1,q}_{\tria}$ as the space of all  piecewise polynomials of degree $q$ w.r.t.~$\tria$, and for $q \geq 1$, set $\mathcal{S}^{0,q}_{\tria,0}:=\mathcal{S}^{-1,q}_{\tria} \cap H_0^1(\Omega)$. With $\partial \tria$ we denote the mesh skeleton $\cup_{\{T \in \tria\}} \partial T$. Next  we construct projectors
whose range  {is included in a} conforming finite element   {space} of prescribed degree on the refined partition and which vanish on the skeleton of the coarse
partition. Moreover, the range of their adjoints contains all piecewise polynomials of the same degree on the coarse partition, as specified next.

\begin{lemma} \label{lem-biorth} Let $q \geq 1$. Then, \new{for a sufficiently small, but \emph{fixed}} $\sigma$ there exists a family of projectors $(P_q^\delta)_{\delta \in \Delta}$ with
\begin{align} \label{35}
&\ran {P_q^\delta}' \supseteq \mathcal{S}^{-1,q}_{\tria^\delta},\quad  \ran P_q^\delta \subseteq \{w \in \mathcal{S}^{0,q}_{\tria_S^\delta,0}\colon w|_{\partial \tria^\delta}=0\},\\ \label{34}
& \|P_q^\delta w\|_{L_2(T')}  \lesssim \|w\|_{L_2(T')} \quad (T' \in \tria^\delta,\,w \in L_2(\Omega)).
\end{align}
\end{lemma}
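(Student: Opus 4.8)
The plan is to construct $P_q^\delta$ locally, element by element over $\tria^\delta$, by exploiting a biorthogonal (dual) basis of the polynomial space $\mathcal{P}_q$ on each coarse simplex $T'$, paired with suitable "bubble-type" functions supported in the interior of $T'$ relative to the fine mesh $\tria_S^\delta$. First I would fix a reference simplex $\hat T$ and a fixed reference refinement $\hat{\tria}_S$ of it (of the bounded depth allowed by $\sigma$), and on $\hat T$ select a set of functions $\{\hat\psi_1,\dots,\hat\psi_N\} \subset \{w \in \mathcal{S}^{0,q}_{\hat{\tria}_S,0} : w|_{\partial\hat T}=0\}$ together with the monomial basis $\{\hat\phi_1,\dots,\hat\phi_N\}$ of $\mathcal{P}_q(\hat T)$ (with $N=\dim\mathcal{P}_q$) such that the Gram-type matrix $M_{ij} := \int_{\hat T}\hat\psi_i \hat\phi_j$ is invertible. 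This is the crucial linear-algebra fact: one needs $N$ interior functions on the refined reference element whose $L_2$-pairings with $\mathcal{P}_q$ span everything, i.e.\ that the restriction of $L_2$-duality from $\{w \in \mathcal{S}^{0,q}_{\hat{\tria}_S,0}: w|_{\partial\hat T}=0\}$ to $\mathcal{P}_q(\hat T)$ is surjective; this holds provided $\hat{\tria}_S$ has enough elements, hence the smallness-of-$\sigma$ hypothesis (a fixed-depth uniform refinement always suffices since continuous functions vanishing on $\partial\hat T$ but not identically zero are dense enough to pair nontrivially with any polynomial). After replacing $\hat\psi_i$ by the combination $\sum_j (M^{-1})_{ji}\hat\psi_j$, I may assume $\int_{\hat T}\hat\psi_i\hat\phi_j = \delta_{ij}$.

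Next I would transport this construction to each $T' \in \tria^\delta$ by the affine pullback $F_{T'}:\hat T \to T'$: set $\psi_{T',i} := \hat\psi_i \circ F_{T'}^{-1}$ and $\phi_{T',i} := \hat\phi_i \circ F_{T'}^{-1}$, rescale so that $\int_{T'}\psi_{T',i}\phi_{T',j} = \delta_{ij}$ (the Jacobian factors cancel after normalization), and define
\be
P_q^\delta w := \sum_{T' \in \tria^\delta}\ \sum_{i=1}^N \Bigl(\int_{T'} w\,\phi_{T',i}\,d{\bf x}\Bigr)\psi_{T',i}.
\ee
Because each $\psi_{T',i}$ is supported in $T'$, continuous, piecewise polynomial of degree $q$ on $\tria_S^\delta$, and vanishes on $\partial T'$ (hence on $\partial\tria^\delta$), we immediately get $\ran P_q^\delta \subseteq \{w \in \mathcal{S}^{0,q}_{\tria_S^\delta,0}: w|_{\partial\tria^\delta}=0\}$; the global continuity across coarse-element faces is automatic since each summand is individually in $H^1_0$. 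By the biorthogonality, for any $v \in \mathcal{S}^{-1,q}_{\tria^\delta}$ one computes $\int_\Omega v\, P_q^\delta w = \sum_{T',i}(\int_{T'}w\phi_{T',i})(\int_{T'}v\psi_{T',i}) = \int_\Omega (\sum_{T',i}(\int_{T'}v\psi_{T',i})\phi_{T',i})\, w$, where $\sum_{T',i}(\int_{T'}v\psi_{T',i})\phi_{T',i}$ is exactly $v$ when $v|_{T'} \in \mathcal{P}_q(T')$ is expanded in the $\phi_{T',i}$-basis and we use $\int_{T'}\psi_{T',i}\phi_{T',j}=\delta_{ij}$; this shows ${P_q^\delta}' v = v$, i.e.\ $\mathcal{S}^{-1,q}_{\tria^\delta} \subseteq \ran {P_q^\delta}'$, and in particular $P_q^\delta$ is a projector (apply $\langle\cdot,\cdot\rangle$ with $v \in \ran P_q^\delta \subset$ piecewise polynomials... more cleanly: $P_q^\delta$ restricted to each $T'$ reproduces the $\psi_{T',i}$ by biorthogonality, so $(P_q^\delta)^2 = P_q^\delta$). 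Finally, the local stability \eqref{34}: on each $T'$, $\|P_q^\delta w\|_{L_2(T')} \le \sum_i |\int_{T'}w\phi_{T',i}|\,\|\psi_{T',i}\|_{L_2(T')} \le \|w\|_{L_2(T')}\sum_i \|\phi_{T',i}\|_{L_2(T')}\|\psi_{T',i}\|_{L_2(T')}$, and by the affine scaling all these norms scale like $|T'|^{1/2}$ times the fixed reference constants (which depend only on $q$, $d$, and the fixed reference refinement), while the normalization constants are controlled by shape regularity; hence the bound with a constant independent of $\delta$ and $T'$.

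The main obstacle is the reference-element claim that one can choose interior functions $\hat\psi_i$ in the fine-mesh space vanishing on $\partial\hat T$ whose $L_2$-pairing matrix against $\mathcal{P}_q(\hat T)$ is invertible — equivalently, that no nonzero polynomial in $\mathcal{P}_q(\hat T)$ is $L_2$-orthogonal to all such interior fine-mesh functions. I would argue this by noting that the interior fine-mesh functions, as $\sigma\to 0$ (refinement gets finer), become $L_2$-dense in $L_2(\hat T)$ among functions vanishing on $\partial\hat T$, so orthogonality to all of them forces the polynomial to vanish a.e.; since $\mathcal{P}_q$ is finite-dimensional, invertibility of the Gram matrix is an open condition, hence holds once $\sigma$ is below some fixed threshold depending only on $q$ and $d$. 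All subsequent estimates are then standard affine scaling arguments and need not be spelled out in detail.
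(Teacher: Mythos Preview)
Your approach is correct and shares the paper's overall strategy: build $P_q^\delta$ element by element via a biorthogonal pair between $\mathcal{P}_q(T')$ and a set of functions in the ``bubble'' space $\mathcal{S}^{0,q}_{\tria_S^\delta,0}\cap H^1_0(T')$, then verify the range, adjoint, projector, and local $L_2$-stability properties by direct computation. The genuine difference lies in how the crucial nondegeneracy is established. You argue abstractly by density and compactness: as the refinement gets finer the interior fine-mesh space becomes $L_2(\hat T)$-dense, so no nonzero polynomial can be orthogonal to all of it, and by finite-dimensionality of $\mathcal{P}_q$ a fixed refinement depth suffices. This is clean but, as you have set it up, relies on transporting a \emph{single} reference refinement $\hat{\tria}_S$ by affine maps, so it covers the practical case of a fixed refinement rule (or finitely many affine classes) rather than the general hypothesis of the lemma. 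The paper instead produces an explicit candidate: for $p\in\mathcal{P}_q(T')$ take its nodal interpolant $p_S\in\mathcal{S}^{0,q}_{\tria_S^\delta}\cap H^1_0(T')$ with the boundary interpolation values reset to zero, and shows quantitatively that $\|p-p_S\|_{L_2(T')}\lesssim\sigma^{1/(2d)}\|p\|_{L_2(T')}$ because only the fine elements touching $\partial T'$ contribute and their total volume is $\lesssim\sigma^{1/d}|T'|$. This yields a uniform lower bound on the local inf-sup constant for \emph{arbitrary} shape-regular refinements obeying the $\sigma$-condition, with an explicit rate in $\sigma$. So the paper's argument is more general and quantitative (it also feeds directly into the practical question, addressed right after the lemma, of how deep the refinement must be), while yours is shorter and entirely adequate once one restricts to finitely many reference configurations.
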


\begin{proof}
Let $T' \in \tria^\delta$. Given $p \in {\mathcal P}_q(T')$, let $p_S \in H^1_0(T')$ denote its continuous piecewise polynomial interpolant of degree $q$ w.r.t.~to the partition $\tria_S^\delta|_{T'}$ using the canonical selection of the interpolation points, where on $\partial T '$ the interpolation values are replaced by zeros.

Obviously, $p$ and $p_S$ coincide on each $T \in \tria_S^\delta|_{T'}$ for which $T \cap \partial T'=\emptyset$. Now consider $T \in \tria_S^\delta|_{T'}$ with $T \cap \partial T'\neq\emptyset$.
Equivalence of norms on finite dimensional spaces, and standard homogeneity arguments show that
$$
\|p-p_S\|_{L_2(T)} \eqsim |T|^{\frac12} \|p-p_S\|_{L_\infty(T)} \lesssim  |T|^{\frac12}  \|p\|_{L_\infty(T')} \eqsim  |T|^{\frac12}|T'|^{-\frac12} \|p\|_{L_2(T')}.
$$
Using the uniform shape regularity of $\tria^\delta_S$ and the definition of $\sigma$, we arrive at
$$
\|p-p_S\|_{L_2(T')}^2=\sum_{\{T \in \tria_S^\delta|_{T'} \colon T \cap \partial T'\neq\emptyset\}} \|p-p_S\|_{L_2(T)}^2 \lesssim \sigma^{1/d} \|p\|_{L_2(T')}^2.
$$
From this closeness of $p$ and $p_S$,  one infers that for $\sigma$ sufficiently small,
\be \label{infsup}
\inf_{0 \neq p \in {\mathcal P}_q(T')} \sup_{0 \neq \tilde{p} \in \mathcal{S}^{0,q}_{\tria_S^\delta,0} \cap H^1_0(T')} \frac{\langle p, \tilde p\rangle_{L_2(T')}}{\|p\|_{L_2(T')}\|\tilde p\|_{L_2(T')}} \gtrsim 1,
\ee
which implies that there exists a (uniform) $L_2(T')$-Riesz
collection of functions in $\mathcal{S}^{0,q}_{\tria_S^\delta,0} \cap H^1_0(T')$ that is biorthogonal to the $L_2(T)$-normalized nodal basis for ${\mathcal P}_q(T')$.

Taking ${P_q^\delta}'$ restricted to $T'$ to be the corresponding biorthogonal projector onto ${\mathcal P}_q(T')$, it has all three stated properties.
\end{proof}

As shown in the above lemma, the projectors $P_q^\delta$ exist when $\tria_S^\delta$ is a refinement of $\tria^\delta$ of sufficient \new{\emph{fixed}} depth. \wnew{Hence, the size of the resulting linear systems remains {\em uniformly} proportional to ${\rm dim}\,X^\delta$, with a proportionality factor
depending on $\sigma$.}
In applications, one needs to know which depth suffices.
The usual procedure to construct a partition $\tria^\delta$ of the closure of a (polytopal) domain $\Omega$ is to recursively apply some fixed `affine equivalent' refinement rule to each simplex in an initial (conforming) partition of $\overline{\Omega}$. With this approach, the partition of each $T' \in \tria^\delta$ formed by its
`descendants' of some fixed generation ${\ell} \geq 1$ falls into a fixed finite number of classes $\tria_{{\ell},1}(T'),\ldots,\tria_{{\ell},N({\ell})}(T')$.
By using that the left-hand side of \eqref{infsup} is invariant under affine transformations, fixing a reference $d$-simplex $T'$ and a refinement procedure of the above type, given a degree $q$ and a generation $\ell$,
it suffices to check whether
$$
\alpha(q, {\ell}):= \inf_{1 \leq j \leq N( {\ell})} \inf_{0 \neq p \in {\mathcal P}_q(T')}
\sup_{0 \neq \tilde{p} \in  H^1_0(T') \cap \prod_{T \in \tria_{{\ell},j}(T')} {\mathcal P}_q(T)}
 \frac{\langle p, \tilde p\rangle_{L_2(T')}}{\|p\|_{L_2(T')}\|\tilde p\|_{L_2(T')}} >0,
$$
\wnew{
\begin{remark}
\label{rem:quantify}
For $d \in \{1,2,3\}$, $q \in \{1,2,3,4\}$, and both newest-vertex bisection and red-refinement, we have calculated the minimal ${\ell}$ such that $\alpha(q,{\ell})>0$.
In all cases but one, this minimal ${\ell}$ equals the minimal generation for which $\dim H^1_0(T') \cap \prod_{T \in \tria_{{\ell},j}(T')} \cP_q(T) \geq \dim {\mathcal P}_q(T')$.
Only for $d=3$, $q=4$, and newest vertex bisection, for one of the three classes it was necessary to increase this generation by one in order to ensure uniform  inf-sup stability.
\end{remark}
}

\begin{remark} \label{remmie}
For the construction of the Fortin interpolator in the FOSLS case, it will be sufficient to replace the conditions  \eqref{35}-\eqref{34} on
the projectors from Lemma~\ref{lem-biorth} by the somewhat weaker ones
%satisfy the relaxed conditions
\begin{align} \label{35relax}
&\ran {P_q^\delta}' \supseteq \mathcal{S}^{0,q}_{\tria^\delta,0}+\mathcal{S}^{-1,q-1}_{\tria^\delta},\quad\ran P_q^\delta \subseteq \mathcal{S}^{0,q}_{\tria_S^\delta,0},
\\ \label{34relax}
& \|\hbar_\delta^{-1} P_q^\delta \hbar_\delta \|_{\cL(L_2(\Omega),L_2(\Omega))}  \lesssim 1,
\end{align}
where $\hbar_\delta$ is the piecewise constant function defined by $\hbar_\delta|_{T'}=\diam T'$ $(T' \in \tria^\delta)$.
Note that because of the uniform `K-mesh property', \eqref{34relax} is implied by local $L_2$-stability of the form
\be \label{36}
\|P_q^\delta w\|_{L_2(T')}  \lesssim \|w\|_{L_2(\{x \in \Omega\colon d(x,T') \lesssim \diam T'\})} \quad (T' \in \tria^\delta,\,w \in L_2(\Omega)),
\ee
which, in particular,  is implied by \eqref{34}.

For $d=1$, the codimension of $\mathcal{S}^{0,q}_{\tria^\delta,0}+\mathcal{S}^{-1,q-1}_{\tria^\delta}$ in $\mathcal{S}^{-1,q}_{\tria^\delta}$ is $1$ when $q=1$, or $0$ when $q>1$.
Since we do not expect that we can benefit from the relaxation of the condition $\ran P_q^\delta \subseteq \{w \in \mathcal{S}^{0,q}_{\tria_S^\delta,0}\colon w|_{\partial \tria^\delta}=0\}$ to $\ran P_q^\delta \subseteq \mathcal{S}^{0,q}_{\tria_S^\delta,0}$,   for $d=1$,
we doubt that the relaxed conditions hold for any less deep refinement $\tria^\delta_S$ of $\tria^\delta$.

For $d>1$ and any fixed degree $q$, however, the aforementioned codimension is $\eqsim \dim \mathcal{S}^{-1,q}_{\tria^\delta}$, and we may hope that a less deep refinement $\tria^\delta_S$ of $\tria^\delta$ suffices to satisfy the relaxed conditions.

So far we have studied this issue in one particular example of $d=2$, $q=2$, and the red-refinement rule.
For this case, we could show the existence of the projectors from Lemma~\ref{lem-biorth} when $\tria^\delta_S$ is created by applying \emph{two} recursive red-refinements to each triangle from $\tria^\delta$.
In the appendix, we show that in order to satisfy the relaxed conditions \eqref{35relax} and  \eqref{36} it suffices to apply \emph{one} red-refinement.
\end{remark}

\begin{remark} When $P_q^\delta$ is an $L_2$-orthogonal projector onto a finite element space, a condition as \eqref{34relax} is known to ensure its $H^1$-stability (see e.g.~\cite{19.26}).
\end{remark}

\begin{remark}
Spaces of type $\mathcal{S}^{0,q}_{\tria^\delta,0}+\mathcal{S}^{-1,q-1}_{\tria^\delta}$, or more precisely $\mathcal{S}^{0,q}_{\tria^\delta,0}+\mathcal{S}^{-1,0}_{\tria^\delta}$, have been used as approximation spaces for the pressure in Stokes solvers to ensure local mass conservation (see e.g.~\cite{38.71}).
\end{remark}

\begin{remark}
 {Also for the construction of the Fortin interpolator for the standard second order formulation, it suffices when $\ran {P_q^\delta}' \supseteq \mathcal{S}^{0,q}_{\tria^\delta,0}+\mathcal{S}^{-1,q-1}_{\tria^\delta}$ instead of $\ran {P_q^\delta}' \supseteq \mathcal{S}^{-1,q}_{\tria^\delta,0}$. The second condition in \eqref{35} however, turns out to be essential.}
\end{remark}

%\newpage
%%%%%%%%%%%%%%%%%%%%%%%%%%%%%%
\subsection{Standard, second order formulation} \label{Sstandard}
For this formulation our construction of a suitable Fortin interpolator will be restricted to second order elliptic spatial differential operators with constant coefficients on convex domains,
and \emph{lowest order} finite elements w.r.t.~partitions of the {time-space} cylinder that are \emph{Cartesian products} of a \emph{quasi-uniform} temporal mesh and a \emph{quasi-uniform} conforming, uniformly shape regular spatial mesh into $d$-simplices.

Consider the families of partitions $(\tria^\delta)_{\delta \in \Delta}$ and $(\tria_S^\delta)_{\delta \in \Delta}$ of $\overline{\Omega} \subset \R^d$ introduced in \S\ref{Saux}.
Assuming them to be \emph{quasi-uniform}, we set $h_\delta:=\max_{T' \in \tria^\delta} \diam T'$ (not to be confused with the piecewise constant function $\hbar_\delta$).

Let $(\mathcal{I}^\delta)_{\delta \in \Delta}$ be a family of \emph{quasi-uniform} partitions of $I$ into subintervals, where the lengths of the subintervals in $\mathcal{I}^\delta$  are $\eqsim h_\delta$.
We denote by $\mathcal{S}^{-1,q}_{\mathcal{I}^\delta}$ and $\mathcal{S}^{0,q}_{\mathcal{I}^\delta}$  the space of all piecewise polynomials or continuous piecewise polynomials of degree $q$ w.r.t.~$\mathcal{I}^\delta$, respectively.

\begin{theorem} \label{thm51} Let $\Omega \subset \R^d$ be a convex polytope, $a(t;\theta,\zeta)$ be of the form \eqref{special_a} for \emph{constant} $K$, $\vec{b}$ and $c$,
and let $X^\delta:=\mathcal{S}^{0,1}_{\mathcal{I}^\delta} \otimes \mathcal{S}^{0,1}_{\tria^\delta,0} \subset X$ and $Y^\delta:=\mathcal{S}^{-1,1}_{\mathcal{I}^\delta} \otimes \mathcal{S}^{0,1}_{\tria_S^\delta,0} \subset Y$, where $\tria_S^\delta$ is a sufficiently deep refinement of $\tria^\delta$ such that  a projector $P_1^\delta$ as in Lemma~\ref{lem-biorth} exists. Then, a Fortin interpolator $Q^\delta$ as in \eqref{fortin} exists, and for $g \in F:=L_2(I) \otimes H^1(\Omega)\cap H^{2}(I) \otimes H^{-1}(\Omega)$, it holds that
${e^\delta_\osc(g)} \lesssim h_\delta^2$.
\end{theorem}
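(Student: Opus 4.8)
The plan is to exhibit an explicit Fortin interpolator of tensor--product form and then read off the oscillation bound from its approximation properties. Concretely, I would take $Q^\delta := Q^\delta_t \otimes Q^\delta_x$, where $Q^\delta_t \in \cL(L_2(I),L_2(I))$ is the $L_2(I)$-orthogonal projector onto $\mathcal{S}^{-1,1}_{\mathcal{I}^\delta}$ — the temporal factor of $Y^\delta$ — and $Q^\delta_x := \Pi^\delta + P_1^\delta(\identity-\Pi^\delta)$, with $P_1^\delta$ the projector furnished by Lemma~\ref{lem-biorth} (for $q=1$) and $\Pi^\delta \in \cL(H^1_0(\Omega),\mathcal{S}^{0,1}_{\tria^\delta,0})$ the Galerkin (Ritz) projector for the symmetric, uniformly coercive form $(\theta,\zeta) \mapsto \int_\Omega K\nabla_{\bf x}\theta\cdot\nabla_{\bf x}\zeta\,d{\bf x}$ (symmetric since $K=K^\top$). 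The inclusion $\ran Q^\delta \subseteq Y^\delta$ is then immediate from $\ran\Pi^\delta = \mathcal{S}^{0,1}_{\tria^\delta,0} \subseteq \mathcal{S}^{0,1}_{\tria_S^\delta,0}$ and $\ran P_1^\delta \subseteq \mathcal{S}^{0,1}_{\tria_S^\delta,0}$ (see \eqref{35}).

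Since $\|Q^\delta\|_{\cL(Y,Y)} \le \|Q^\delta_t\|_{\cL(L_2(I))}\,\|Q^\delta_x\|_{\cL(H^1_0(\Omega))}$ and $\|Q^\delta_t\| = 1$, uniform boundedness reduces to $H^1_0(\Omega)$-stability of $Q^\delta_x$. For $\Pi^\delta$ this is C\'ea's lemma; for the correction term I would use an inverse inequality on the quasi-uniform refinement $\tria_S^\delta$ (mesh-width $\eqsim h_\delta$) together with \eqref{34} to get $\|P_1^\delta(\identity-\Pi^\delta)v\|_{H^1_0(\Omega)} \lesssim h_\delta^{-1}\|P_1^\delta(\identity-\Pi^\delta)v\|_{L_2(\Omega)} \lesssim h_\delta^{-1}\|(\identity-\Pi^\delta)v\|_{L_2(\Omega)}$, and then absorb the $h_\delta^{-1}$ by the $L_2$-error estimate $\|(\identity-\Pi^\delta)v\|_{L_2(\Omega)} \lesssim h_\delta\|v\|_{H^1_0(\Omega)}$. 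This last estimate is an Aubin--Nitsche duality argument, and is precisely where convexity of $\Omega$ enters: it supplies the $H^2$-regularity of the constant--coefficient elliptic problem on a convex polytope. (It also explains the detour via $\Pi^\delta$: $P_1^\delta$ by itself is \emph{not} $H^1_0$-stable.)

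To verify the Fortin identity $(B X^\delta)\big((\identity - Q^\delta)Y\big)=0$ I would write $\identity - Q^\delta = (\identity - Q^\delta_t\otimes\identity) + Q^\delta_t\otimes(\identity - Q^\delta_x)$ and evaluate $(Bw)\big((\identity - Q^\delta)v\big)$, $w \in X^\delta$, $v \in Y$, term by term, using that $\nabla_{\bf x}$ commutes with the temporal operators. Because $K,\vec b,c$ are constant and $w$ is piecewise affine in $t$, each weight $\partial_t w$, $K\nabla_{\bf x}w$, $\vec b\cdot\nabla_{\bf x}w$, $cw$ is, for fixed ${\bf x}$, piecewise affine in $t$, i.e.\ lies in $\mathcal{S}^{-1,1}_{\mathcal{I}^\delta}$ and is reproduced by $Q^\delta_t$; by self-adjointness of $Q^\delta_t$ the $(\identity - Q^\delta_t\otimes\identity)$-summand then drops out entirely. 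In the remaining summand one is left pairing the weights spatially against $(\identity-Q^\delta_x)\phi = (\identity - P_1^\delta)(\identity-\Pi^\delta)\phi$. For $\partial_t w$, $\vec b\cdot\nabla_{\bf x}w$ and $cw$, which lie (for fixed $t$) in $\mathcal{S}^{-1,1}_{\tria^\delta}=\ran{P_1^\delta}'$ by \eqref{35}, transferring $\identity-{P_1^\delta}'$ onto the weight annihilates the term. For the diffusion term $\int_\Omega K\nabla_{\bf x}\chi\cdot\nabla_{\bf x}\big[(\identity - P_1^\delta)(\identity-\Pi^\delta)\phi\big]\,d{\bf x}$ with $\chi:=w(t,\cdot)\in\mathcal{S}^{0,1}_{\tria^\delta,0}$, I would integrate by parts over each coarse simplex $T'\in\tria^\delta$; there $K\nabla_{\bf x}\chi$ is a constant vector and $\divv_{\bf x}(K\nabla_{\bf x}\chi)=0$, so only integrals over faces in $\partial\tria^\delta$ survive, on which $P_1^\delta(\cdot)$ vanishes by the second inclusion in \eqref{35}. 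Re-assembling the element contributions yields $\int_\Omega K\nabla_{\bf x}\chi\cdot\nabla_{\bf x}(\identity-\Pi^\delta)\phi\,d{\bf x}=0$ by Galerkin orthogonality of $\Pi^\delta$, which proves \eqref{fortin} and hence \eqref{inf-sup}.

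Finally, since $e^\delta_\osc(g)=\sup_{0\ne v\in Y} g\big((\identity - Q^\delta)v\big)/\|v\|_Y$, I would again split $\identity-Q^\delta$. The temporal summand gives $\langle(\identity-Q^\delta_t\otimes\identity)g,v\rangle \le \|(\identity-Q^\delta_t\otimes\identity)g\|_{L_2(I;H^{-1}(\Omega))}\|v\|_Y \lesssim h_\delta^2\|g\|_{H^2(I;H^{-1}(\Omega))}\|v\|_Y$, by the second-order $L_2$-approximation of the piecewise-linear temporal projector on the quasi-uniform $\mathcal{I}^\delta$ together with $g\in H^2(I;H^{-1}(\Omega))$. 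For the spatial summand $\langle (Q^\delta_t\otimes\identity)g,(\identity\otimes(\identity-Q^\delta_x))v\rangle$, for a.e.\ $t$ one has $(Q^\delta_t g)(t)\in H^1(\Omega)$; writing $(\identity-Q^\delta_x)\phi=(\identity-P_1^\delta)(\identity-\Pi^\delta)\phi$ and transferring $\identity-{P_1^\delta}'$ onto $(Q^\delta_t g)(t)$, the $L_2$-stability of ${P_1^\delta}'$ and its reproduction of $\mathcal{S}^{-1,1}_{\tria^\delta}$ yield $\|(\identity-{P_1^\delta}')(Q^\delta_t g)(t)\|_{L_2(\Omega)}\lesssim h_\delta\|(Q^\delta_t g)(t)\|_{H^1(\Omega)}$, and combining with $\|(\identity-\Pi^\delta)\phi\|_{L_2(\Omega)}\lesssim h_\delta\|\phi\|_{H^1_0(\Omega)}$ (convexity again) and $\|Q^\delta_t g\|_{L_2(I;H^1(\Omega))}\le\|g\|_{L_2(I;H^1(\Omega))}$ gives $\lesssim h_\delta^2\|g\|_{L_2(I;H^1(\Omega))}\|v\|_Y$; adding, $e^\delta_\osc(g)\lesssim h_\delta^2\|g\|_F$. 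I expect the two delicate steps to be the uniform $H^1_0(\Omega)$-stability of $Q^\delta_x$, where the coarse corrector $\Pi^\delta$ and the $H^2$-regularity of convex polytopes are both indispensable, and the closing of the diffusion-term identity — getting the coarse-mesh integration by parts (which needs $K$ constant) to dovetail with the vanishing of $\ran P_1^\delta$ on $\partial\tria^\delta$ and with the Galerkin orthogonality of $\Pi^\delta$; it is this requirement that forces $\Pi^\delta$ to be the elliptic projector and confines the theorem to constant-coefficient operators and quasi-uniform tensor-product meshes.
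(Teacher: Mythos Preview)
Your proof is correct and essentially identical to the paper's: the paper also takes $Q^\delta=Q_t^\delta\otimes Q_{\bf x}^\delta$ with $Q_t^\delta$ the $L_2(I)$-orthogonal projector onto $\mathcal{S}^{-1,1}_{\mathcal{I}^\delta}$ and $Q_{\bf x}^\delta=Q_{\bf x}^{A,\delta}+Q_{\bf x}^{B,\delta}(\identity-Q_{\bf x}^{A,\delta})$ where $Q_{\bf x}^{A,\delta}$ is the Galerkin projector for $\langle K\nabla_{\bf x}\cdot,\nabla_{\bf x}\cdot\rangle$ and $Q_{\bf x}^{B,\delta}=P_1^\delta$, establishes $H^1_0$-stability via the inverse inequality combined with Aubin--Nitsche (using $H^2$-regularity on convex polytopes), closes the diffusion term via element-wise integration by parts exploiting $\ran P_1^\delta|_{\partial\tria^\delta}=0$ together with Galerkin orthogonality, and derives the $h_\delta^2$ oscillation bound from the same factorization $\identity-Q_{\bf x}^\delta=(\identity-P_1^\delta)(\identity-\Pi^\delta)$. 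The only differences are presentational: the paper first lists abstract sufficient conditions on the temporal and spatial factors and then verifies them, whereas you compute the Fortin identity and the oscillation bound directly.
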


\begin{remark} For this $(X^\delta)_{\delta \in \Delta}$, and a sufficiently smooth $u$ we have $e_\ap^\delta {(u)} \lesssim h_\delta$ where in general an approximation error of higher cannot be expected. So indeed, $ {e^\delta_\osc(g)}$ is of higher order as desired, cf.~\eqref{oscillation}.
\end{remark}

\begin{proof}
We are going to construct uniformly bounded $Q_t^\delta \in \cL(L_2(I),L_2(I))$, $Q_{\bf x}^\delta \in \cL(H^1_0(\Omega),H^1_0(\Omega))$ with $\ran Q_t^\delta \subset \mathcal{S}^{-1,1}_{\mathcal{I}^\delta}$,
$\ran Q_{\bf x}^\delta \subset \mathcal{S}^{0,1}_{\tria_S^\delta,0}$ and
\begin{align} \nonumber
\big\langle \mathcal{S}^{0,1}_{\mathcal{I}^\delta}, \ran(\identity- Q_t^\delta)\big\rangle_{L_2(I)}&=0=
\big\langle {\textstyle \frac{d}{d t}} \mathcal{S}^{0,1}_{\mathcal{I}^\delta}, \ran(\identity-Q_t^\delta)\big\rangle_{L_2(I)},\\ \label{14}
\big\langle \mathcal{S}^{-1,0}_{\tria^\delta,0}\!+\!\mathcal{S}^{0,1}_{\tria^\delta,0}, \ran(\identity\!-\! Q_{\bf x}^\delta)\big\rangle_{L_2(\Omega)}&=0=
\big\langle K \nabla_{\bf x} \mathcal{S}^{0,1}_{\tria^\delta,0}, \nabla_{\bf x} \ran(\identity\!-\!Q_{\bf x}^\delta)\big\rangle_{L_2(\Omega)^d}.\!\!\!\!\!\!
\end{align}
Then one verifies that $Q^\delta:=Q_t^\delta \otimes Q_{\bf x}^\delta$ satisfies the conditions in \eqref{fortin}.

A valid choice for $Q_t^\delta$ is given by the $L_2(I)$-orthogonal projector onto $\mathcal{S}^{-1,1}_{\mathcal{I}^\delta}$. It satisfies in addition
\be
\label{h2}
\|(\identity-Q_t^{\delta})'\|_{\cL(H^2(I),L_2(I))} \lesssim h_\delta^2.
\ee

We seek $Q_{\bf x}^\delta$ in the form $Q_{\bf x}^\delta=Q_{\bf x}^{A,\delta}+Q_{\bf x}^{B,\delta}+Q_{\bf x}^{B,\delta} Q_{\bf x}^{A,\delta}$
with $\ran Q_{\bf x}^{A,\delta},  \ran Q_{\bf x}^{B,\delta} \subset \mathcal{S}^{0,1}_{\tria_S^\delta,0}$ such that
\begin{align} \label{16}
&\|Q_{\bf x}^{A,\delta}\|_{\cL(H^1_0(\Omega),H_0^1(\Omega))} \lesssim 1,\quad \|\identity-Q_{\bf x}^{A,\delta}\|_{\cL(H^1_0(\Omega),L_2(\Omega))} \lesssim h_\delta,\\ \label{15}
&\big\langle K \nabla_{\bf x} \mathcal{S}^{0,1}_{\tria^\delta,0}, \nabla_{\bf x} \ran(\identity-Q_{\bf x}^{A,\delta})\big\rangle_{L_2(\Omega)^d}=0,\\ \label{17}
& \|Q_{\bf x}^{B,\delta}\|_{\cL(L_2(\Omega),L_2(\Omega))} \lesssim 1,\quad
\|(\identity - Q_{\bf x}^{B,\delta})'\|_{\cL(H^1(\Omega),L_2(\Omega))} \lesssim h_\delta,\\ \label{18}
&\big\langle \mathcal{S}^{-1,0}_{\tria^\delta,0}\!+\! \mathcal{S}^{0,1}_{\tria^\delta,0}, \ran (\identity\!-\!Q_{\bf x}^{B,\delta})\big\rangle_{L_2(\Omega)}=0=
\big\langle K \nabla_{\bf x} \mathcal{S}^{0,1}_{\tria^\delta,0}, \nabla_{\bf x} \ran Q_{\bf x}^{B,\delta}\big\rangle_{L_2(\Omega)^d}.\!\!\!\!\!
\end{align}

One easily verifies that
$$
\identity- Q_{\bf x}^\delta=(\identity- Q_{\bf x}^{B,\delta})(\identity- Q_{\bf x}^{A,\delta}),
$$
which, together with the first relation in \eqref{18},  yields the first relation in \eqref{14}. Moreover, from \eqref{15} and the second relation
in \eqref{18} one deduces the second relation \eqref{14}.

Similarly, observing that $Q_{\bf x}^\delta=Q_{\bf x}^{A,\delta}+Q_{\bf x}^{B,\delta}(\identity-Q_{\bf x}^{A,\delta})$ in combination with
\eqref{16}, $\|Q_{\bf x}^{B,\delta}\|_{\cL(L_2(\Omega),L_2(\Omega))} \lesssim 1$, and the inverse inequality $\|\,\|_{H^1(\Omega)} \lesssim h_\delta^{-1} \|\,\|_{L_2(\Omega)}$ on $\mathcal{S}^{0,1}_{\tria_S^\delta,0} \supset \ran Q_{\bf x}^{B,\delta}$, one infers that  $\|Q_{\bf x}^{\delta}\|_{\cL(H^1_0(\Omega),H_0^1(\Omega))} \lesssim 1$. Thus, all claimed properties of $Q_{\bf x}^\delta$ have been verified.

Before turning to the construction of $Q_{\bf x}^{A,\delta}$ and $Q_{\bf x}^{B,\delta}$, we estimate ${e^\delta_\osc(g)}$. For $g \in F$, we have
$$
\|(\identity-Q^\delta)' g\|_{Y'} \leq \|(\identity-Q^\delta)'\|_{\cL(F,Y')} \|g\|_F\\
 =\|\identity-Q^\delta\|_{\cL(Y,F')} \|g\|_F.
$$
Writing
$$
\identity-Q^\delta=(\identity \otimes (\identity-Q_{\bf x}^\delta))(Q_t^\delta \otimes \identity)+(\identity-Q_t^\delta) \otimes \identity,
$$
from $L_2(I) \otimes H^1(\Omega)' \hookrightarrow F'$, $H^{2}(I)' \otimes H_0^{1}(\Omega)\hookrightarrow F'$, and $\|Q^\delta_t\|_{\cL(L_2(I),L_2(I))} \lesssim 1$ we infer
\begin{align*}
&\|\identity-Q^\delta\|_{\cL(Y,F')}  \\
&\lesssim \|\identity \otimes (\identity-Q_{\bf x}^\delta)\|_{\cL(Y,L_2(I) \otimes H^1(\Omega)')}+
\|(\identity-Q_t^\delta)\otimes \identity\|_{\cL(Y,H^{2}(I)' \otimes H_0^{1}(\Omega))}\\
& = \|\identity-Q_{\bf x}^\delta\|_{\cL(H^1_0(\Omega),H^1(\Omega)')}
+\|\identity-Q_t^\delta\|_{\cL(L_2(I),H^{2}(I)')}\\
& \leq \|\identity-Q_{\bf x}^{B,\delta}\|_{\cL(L_2(\Omega),H^1(\Omega)')} \|\identity-Q_{\bf x}^{A,\delta}\|_{\cL(H^1_0(\Omega),L_2(\Omega))}
+\|(\identity-Q_t^\delta)'\|_{\cL(H^{2}(I),L_2(I))}\\
&\lesssim h_\delta h_\delta+ h_\delta^{2},
\end{align*}
where we have used \eqref{h2}, \eqref{16}, and \eqref{17}.

We now identify the operators  $Q_{\bf x}^{A,\delta}, Q_{\bf x}^{B,\delta}$. For the operator $Q_{\bf x}^{A,\delta}$, we take the `Galerkin' projector onto $\mathcal{S}^{0,1}_{\tria^\delta,0}$, i.e. the orthogonal projector w.r.t.~$\langle K \nabla_{\bf x}\cdot,\nabla_{\bf x}\cdot\rangle_{L_2(\Omega)^d}$.
It satisfies \eqref{15}, and $\|Q_{\bf x}^{A,\delta}\|_{\cL(H^1_0(\Omega),H_0^1(\Omega))}=1$.

{Thanks to $\Omega$ being a convex polytope, the homogeneous Dirichlet problem with operator $-\divv K \nabla$ is $H^2$-regular.
Indeed, by making a linear coordinate transformation that transforms the convex polytope into another convex polytope (\cite{14.4}), the operator reads as $-\triangle$ for which this regularity result is well-known.
Consequently} the usual Aubin-Nitsche duality argument shows that
$$
\|(\identity -Q_{\bf x}^{A,\delta})v\|_{L_2(\Omega)} \lesssim h_\delta \|\nabla(\identity -Q_{\bf x}^{A,\delta})v\|_{L_2(\Omega)^d} \leq h_\delta \|\nabla v\|_{L_2(\Omega)^d}
$$
holds for $v \in H^1_0(\Omega)$. This verifies the validity of \eqref{16}.

Next, we take $Q_{\bf x}^{B,\delta}=P_1^\delta$ as constructed in Lemma~\ref{lem-biorth}. It satisfies $\ran P_1^\delta \subset \mathcal{S}^{0,1}_{\tria_S^\delta,0}$,
$\|P_1^\delta\|_{\cL(L_2(\Omega),L_2(\Omega))} \lesssim 1$, and $\ran {P_1^\delta}' \supseteq \mathcal{S}^{-1,1}_{\tria^\delta}$.
The last property shows the first condition in \eqref{18}.   Using  the uniform boundedness, one concludes that
$$
\|(\identity-P_1^\delta)' w\|_{L_2(\Omega)} \lesssim \inf_{v \in \mathcal{S}^{-1,1}_{\tria^\delta}}\|w-v\|_{L_2(\Omega)} \lesssim h_\delta |w|_{H^1(\Omega)},
$$
which is the second condition in \eqref{17}.
The second condition in \eqref{18} follows by an element-wise integration-by-parts from
$w|_{\partial\tria^\delta}=0$ for any $w \in \ran P_1^\delta$, and the fact that $\mathcal{S}^{0,1}_{\tria^\delta,0}$ is a space of continuous piecewise \emph{linears}.\footnote{This argument is the sole reason why this theorem is restricted to lowest order trial spaces $X^\delta$.}
\end{proof}

%\newpage
%%%%%%%%%%%%%%%%%%%%%%%
\subsection{FOSLS formulation} \label{SFOSLS2}\mbox{}
We construct a suitable Fortin interpolator for the FOSLS formulation of our data assimilation problem.
In contrast to the standard second order formulation, we allow now non-convex domains $\Omega$, higher order finite element spaces w.r.t.~possibly
non-quasi-uniform partitions into prismatic elements. However, the time-space cylinder must be  partitioned into time slabs.

\begin{theorem} \label{thm3} As in Theorem~\ref{thm51}, let $a(t;\theta,\zeta)$ be of the form \eqref{special_a} for \emph{constant} $K$, $\vec{b}$ and $c$.
For $(I^\delta=(([t_i^\delta,t^\delta_{i+1}])_i)_{\delta \in \Delta}$ being a family of partitions of $I$, we consider $(X^\delta)_{\delta \in \Delta}$, $(Z^\delta)_{\delta \in \Delta}$, and $(Y^\delta)_{\delta \in \Delta}$ that satisfy
\begin{align} \label{spaceX}
X^\delta & \subseteq \{w \in C(I;H^1_0(\Omega)) \colon w|_{(t_i^\delta,t_{i+1}^\delta)} \in {\mathcal P}_q(t_i^\delta,t_{i+1}^\delta) \otimes {\mathcal S}^{0,q}_{\tria^{\delta_i},0}\},\\ \nonumber
Y \supseteq \bar{Y}^\delta & \supseteq\{v \in Y \colon v|_{(t_i^\delta,t_{i+1}^\delta)} \in {\mathcal P}_q(t_i^\delta,t_{i+1}^\delta) \otimes {\mathcal S}^{0,q}_{\tria_S^{\delta_i},0}\},\footnotemark\\ \label{spaceZ}
Z^\delta & \subseteq \{\vec{q} \in L_2(I;H(\divv;\Omega)) \colon \vec{q}|_{(t_i^\delta,t_{i+1}^\delta)} \in {\mathcal P}_{q-1}(t_i^\delta,t_{i+1}^\delta) \otimes {\mathcal Z}^q_{\tria^{\delta_i}}\},
\end{align}
\footnotetext{If it were not for guaranteeing an oscillation error of higher order, then the polynomial degree in the time direction could be reduced to $q-1$.\label{foot5}}%
where $\divv {\mathcal Z}^q_{\tria^{\delta_i}} \subset {\mathcal S}_{\tria^{\delta_i}}^{-1,q-1}$, and where for each $i$, $\tria^{\delta_i}$ is some partition from $(\tria^\delta)_{\delta \in \Delta}$ with corresponding refinement $\tria_S^{\delta_i} \in (\tria_S^\delta)_{\delta \in \Delta}$.

Then for $\tria_S^\delta$ being a sufficiently deep refinement of $\tria^\delta$ such that  a projector $P_q^\delta$ as in Remark~\ref{remmie} exists, a Fortin interpolator $\bar{Q}^\delta$ as in \eqref{fortin2}
exists, and
\begin{align*}
(\bar{e}^\delta_\osc(g))^2 \lesssim \sum_i \sum_{T' \in \tria^{\delta_i} }\Big\{&
\inf_{p \in {\mathcal P}_q(t_i^\delta,t_{i+1}^\delta) \otimes L_2(T')} \|g- p\|^2_{L_2((t_i^\delta,t_{i+1}^\delta) \times T')} \\&+
(\diam T')^2 \inf_{p \in L_2(t_i^\delta,t_{i+1}^\delta) \otimes {\mathcal P}_{q-1}(T')} \|g- p\|^2_{L_2((t_i^\delta,t_{i+1}^\delta)\times T')}\Big\}.
\end{align*}
\end{theorem}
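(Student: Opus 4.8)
The plan is to exploit that $X^\delta$, $\bar Y^\delta$, $Z^\delta$ are defined time-slab by time-slab and that $Y=L_2(I;H^1_0(\Omega))$ decomposes orthogonally over the slabs $I_i:=(t_i^\delta,t_{i+1}^\delta)$, so it suffices to build $\bar Q^\delta$ one slab at a time and set $\bar Q^\delta|_{I_i}:=\Pi_{t,i}\otimes\tilde P^{\delta_i}$, where $\Pi_{t,i}$ is the $L_2(I_i)$-orthogonal projector onto $\mathcal P_q(I_i)$ and $\tilde P^{\delta_i}\in\cL(H^1_0(\Omega),H^1_0(\Omega))$, with $\ran\tilde P^{\delta_i}\subseteq\mathcal S^{0,q}_{\tria_S^{\delta_i},0}$, is a spatial operator to be chosen. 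The guiding identity is that, for $v\in Y$, $\rho\in\mathcal P_q(I_i)$ and $\psi\in\mathcal S^{0,q}_{\tria^{\delta_i},0}+\mathcal S^{-1,q-1}_{\tria^{\delta_i}}$,
\[
\big\langle(\identity-\Pi_{t,i}\otimes\tilde P^{\delta_i})v,\ \rho\otimes\psi\big\rangle_{L_2(I_i\times\Omega)}=\big\langle v,\ \rho\otimes(\psi-(\tilde P^{\delta_i})'\psi)\big\rangle_{L_2(I_i\times\Omega)},
\]
which vanishes as soon as $(\tilde P^{\delta_i})'$ reproduces $\mathcal S^{0,q}_{\tria^{\delta_i},0}+\mathcal S^{-1,q-1}_{\tria^{\delta_i}}$. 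Granting this, and using that for constant $K,\vec{b},c$ one has $\partial_t w|_{I_i}\in\mathcal P_{q-1}(I_i)\otimes\mathcal S^{0,q}_{\tria^{\delta_i},0}$ and $(\vec{b}\cdot\nabla_{\bf x}+c)w|_{I_i}\in\mathcal P_q(I_i)\otimes(\mathcal S^{0,q}_{\tria^{\delta_i},0}+\mathcal S^{-1,q-1}_{\tria^{\delta_i}})$ for $w\in X^\delta$, while $\divv\vec{q}|_{I_i}\in\mathcal P_{q-1}(I_i)\otimes\mathcal S^{-1,q-1}_{\tria^{\delta_i}}$ for $\vec{q}\in Z^\delta$ (since $\divv\mathcal Z^q_{\tria^{\delta_i}}\subseteq\mathcal S^{-1,q-1}_{\tria^{\delta_i}}$), an elementwise spatial integration by parts of $\int\vec{q}\cdot\nabla_{\bf x}(\identity-\bar Q^\delta)v$ (legitimate because $(\identity-\bar Q^\delta)v(t)\in H^1_0(\Omega)$ for a.e.\ $t$) shows $C(w,\vec{q})\big((\identity-\bar Q^\delta)v\big)=0$ for all $(w,\vec{q})\in X^\delta\times Z^\delta$ and $v\in Y$, which is the middle condition in \eqref{fortin2}.

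For $\tilde P^{\delta_i}$ I would mimic the decomposition in the proof of Theorem~\ref{thm51}, but with the convexity-dependent Ritz projector there replaced by a Scott--Zhang type quasi-interpolator, so as to cover general domains and non-quasi-uniform meshes. Let $Q_A^{\delta_i}\colon H^1_0(\Omega)\to\mathcal S^{0,q}_{\tria^{\delta_i},0}$ be such a quasi-interpolator, uniformly $H^1_0$-stable with the local approximation property $\|(\identity-Q_A^{\delta_i})\phi\|_{L_2(T')}\lesssim\diam(T')\,\|\nabla_{\bf x}\phi\|_{L_2(\omega_{T'})}$ over patches $\omega_{T'}$ of finite overlap, and let $P_q^{\delta_i}$ be the biorthogonal projector of Remark~\ref{remmie}, so that $\ran(P_q^{\delta_i})'\supseteq\mathcal S^{0,q}_{\tria^{\delta_i},0}+\mathcal S^{-1,q-1}_{\tria^{\delta_i}}$, $\ran P_q^{\delta_i}\subseteq\mathcal S^{0,q}_{\tria_S^{\delta_i},0}$, and the local $L_2$-stability \eqref{36} (hence \eqref{34relax}) holds. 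Setting
\[
\tilde P^{\delta_i}:=Q_A^{\delta_i}+P_q^{\delta_i}(\identity-Q_A^{\delta_i}),\qquad\text{so that}\qquad\identity-\tilde P^{\delta_i}=(\identity-P_q^{\delta_i})(\identity-Q_A^{\delta_i}),
\]
the reproduction property of $(\tilde P^{\delta_i})'$ follows from $(\identity-\tilde P^{\delta_i})'=(\identity-Q_A^{\delta_i})'(\identity-P_q^{\delta_i})'$ together with $(\identity-P_q^{\delta_i})'\psi=0$ for $\psi\in\ran(P_q^{\delta_i})'$. Uniform $H^1_0$-stability of $\tilde P^{\delta_i}$ is obtained by bounding the second summand via the inverse inequality $\|\nabla_{\bf x}\chi\|_{L_2(\Omega)}\lesssim\|\hbar_{\delta_i}^{-1}\chi\|_{L_2(\Omega)}$ on $\mathcal S^{0,q}_{\tria_S^{\delta_i},0}$ (valid since $\tria_S^{\delta_i}$ refines $\tria^{\delta_i}$ by a fixed depth), then the weighted $L_2$-stability \eqref{34relax} of $P_q^{\delta_i}$, then the local $L_2$-approximation of $Q_A^{\delta_i}$ and finite overlap of the patches. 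Tensoring with the time contraction $\Pi_{t,i}$ and summing over slabs yields $\bar Q^\delta\in\cL(Y,\bar Y^\delta)$ with $\sup_\delta\|\bar Q^\delta\|_{\cL(Y,Y)}\lesssim1$, the remaining two conditions in \eqref{fortin2}; all constants are uniform by the shape-regularity and K-mesh hypotheses on $(\tria^\delta)_{\delta\in\Delta}$.

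For the oscillation bound I write $\bar e^\delta_\osc(g)=\sup_{0\neq v\in Y}|\langle g,(\identity-\bar Q^\delta)v\rangle|/\|v\|_Y$ (for $g\in L_2(I\times\Omega)$; otherwise the right-hand side is infinite) and split, on each slab, $\identity-\Pi_{t,i}\otimes\tilde P^{\delta_i}=(\identity-\Pi_{t,i})\otimes\identity+\Pi_{t,i}\otimes(\identity-\tilde P^{\delta_i})$. The first summand contributes $\langle(\identity-\Pi_{t,i})g,(\identity-\Pi_{t,i})v\rangle$ by self-adjointness and idempotency of $\Pi_{t,i}$, which, using Poincar\'e on $H^1_0(\Omega)$ and that $(\identity-\Pi_{t,i})\otimes\identity$ is the $L_2(I_i\times T')$-projection error onto $\mathcal P_q(I_i)\otimes L_2(T')$, is bounded by $\big(\sum_{T'\in\tria^{\delta_i}}\inf_{p\in\mathcal P_q(I_i)\otimes L_2(T')}\|g-p\|_{L_2(I_i\times T')}^2\big)^{1/2}\|\nabla_{\bf x}v\|_{L_2(I_i\times\Omega)}$. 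For the second summand, with $g_i:=\Pi_{t,i}g$, it equals $\langle g_i,(\identity\otimes(\identity-\tilde P^{\delta_i}))v\rangle$; since $(\identity-\tilde P^{\delta_i})v(t)\perp\mathcal S^{-1,q-1}_{\tria^{\delta_i}}$ one may subtract on each $T'$ the best $L_2(T')$-approximation in $\mathcal P_{q-1}(T')$ of $g_i(t)$, and the local estimate $\|(\identity-\tilde P^{\delta_i})v(t)\|_{L_2(T')}\lesssim\|(\identity-Q_A^{\delta_i})v(t)\|_{L_2(\{x\colon d(x,T')\lesssim\diam T'\})}\lesssim\diam(T')\|\nabla_{\bf x}v(t)\|_{L_2(\omega'_{T'})}$ — coming from \eqref{36} and the Scott--Zhang property — yields, after Cauchy--Schwarz and finite overlap, a bound by $\big(\sum_{T'}(\diam T')^2\inf_{p\in L_2(I_i)\otimes\mathcal P_{q-1}(T')}\|g-p\|_{L_2(I_i\times T')}^2\big)^{1/2}\|\nabla_{\bf x}v\|_{L_2(I_i\times\Omega)}$. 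Summing over $i$ and a final Cauchy--Schwarz over the slabs delivers the claimed estimate for $(\bar e^\delta_\osc(g))^2$.

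The main obstacle is the spatial operator $\tilde P^{\delta_i}$: it must simultaneously carry the adjoint-reproduction property that makes the Fortin orthogonality work — which is precisely why the biorthogonal projector of Remark~\ref{remmie} (with $\ran(P_q^{\delta_i})'$ large enough) is needed, rather than a plain $L_2$-orthogonal projector — and be uniformly $H^1_0$-stable on possibly non-quasi-uniform meshes, which forces the factorized form $\identity-\tilde P^{\delta_i}=(\identity-P_q^{\delta_i})(\identity-Q_A^{\delta_i})$ and a careful interplay of the weighted inverse inequality with the weighted $L_2$-stability \eqref{34relax}; the very same localized estimates are then what produce the elementwise, $\diam$-weighted form of the oscillation bound.
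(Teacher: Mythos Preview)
Your proof is correct and follows essentially the same route as the paper's: the slab-wise tensor product $\Pi_{t,i}\otimes\tilde P^{\delta_i}$, the factorization $\tilde P^{\delta_i}=Q_A^{\delta_i}+P_q^{\delta_i}(\identity-Q_A^{\delta_i})$ with $Q_A^{\delta_i}$ a Scott--Zhang interpolator and $P_q^{\delta_i}$ the biorthogonal projector of Remark~\ref{remmie}, the $H^1_0$-stability via the inverse inequality combined with \eqref{34relax} and the local Scott--Zhang approximation, and the two-term splitting of the oscillation error are all exactly the ingredients the paper uses. The only cosmetic differences are that the paper first integrates $C(w,\vec q)(v)$ by parts to a single $L_2$-pairing before checking orthogonality (rather than treating the terms of $C$ separately as you do), takes $Q_A^{\delta_i}$ to map into the \emph{refined} space $\mathcal S^{0,q}_{\tria_S^{\delta_i},0}$ (either choice works), and phrases the oscillation estimate on the $g$-side via adjoints rather than on the $v$-side as you do; these are equivalent.
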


\begin{remark} In view of balancing the approximation rates for smooth functions by $X^\delta$ in $X$ and $Z^\delta$ in $Z$,
for $X^\delta$ and $Z^\delta$ being the spaces on the right-hand side of \eqref{spaceX} or \eqref{spaceZ} (in the latter case, possibly with $\vec{q} \in L_2(I;H(\divv;\Omega))$ reading as $\vec{q} \in C(I;H(\divv;\Omega))$),
 a natural choice for ${\mathcal Z}^q_{\tria^\delta} $
is the Raviart-Thomas space of index $q$ or the Brezzi-Douglas-Marini finite element space of index $\min(1,q-1)$ w.r.t.~$\tria^\delta$.
%With the latter choice, even $\divv {\mathcal Z}^q_{\tria^\delta}  \subset \mathcal{S}^{-1,\min(1,q-1)-1}_{\tria^\delta}$.

Notice that with these definitions of $X^\delta$ and $Z^\delta$, for sufficiently smooth $g$ the local oscillation error is of higher order than the expected local approximation error by $X^\delta$ in $X$ and $Z^\delta$ in $Z$.
\end{remark}

\begin{proof} For $(w,\vec{q}) \in X^\delta \times Z^\delta$, $v \in Y$, taking ${\mathcal Z}^q_{\tria^{\delta_i}} \subset H(\divv;\Omega)$ into account, integration-by-parts shows
$$
C(w,\vec{q})(v)=\int_I \int _\Omega \big({\textstyle \frac{\partial w}{\partial t}}-\divv_{\bf x} \vec{q}+\vec{b}\cdot \nabla_{\bf x} w+c w \big)v \,dt\,d{\bf x}.
$$
Let $(\bar{Q}_{\bf x}^\delta)_{\delta \in \Delta}$ denote a family of uniformly bounded operators $\bar{Q}_{\bf x}^\delta \in \cL(H^1_0(\Omega),H^1_0(\Omega))$ with the properties
\be \label{33}
\ran \bar{Q}_{\bf x}^\delta \subset \mathcal{S}^{0,q}_{\tria_S^\delta,0},\quad
\langle \mathcal{S}^{0,q}_{\tria^\delta,0}+\mathcal{S}^{-1,q-1}_{\tria^\delta}, \ran(\identity-\bar{Q}_{\bf x}^\delta)\rangle_{L_2(\Omega)}=0.
\ee
Moreover, let $Q^i_q$ be the $L_2(I)$-orthogonal projector onto ${\mathcal P}_q(t_i^\delta,t_{i+1}^\delta)$. Then, the operator
$\bar{Q}^\delta$ defined by
$$
(\bar{Q}^\delta v)|_{(t_i^\delta,t_{i+1}^\delta)\times \Omega}=(Q^i_q \otimes \bar{Q}_{\bf x}^{\delta_i}) v|_{(t_i^\delta,t_{i+1}^\delta)\times \Omega},
$$
satisfies the conditions of \eqref{fortin2}.

We again seek $\bar{Q}_{\bf x}^\delta$ of the form $\bar{Q}_{\bf x}^\delta=\bar{Q}_{\bf x}^{A,\delta}+\bar{Q}_{\bf x}^{B,\delta}+\bar{Q}_{\bf x}^{B,\delta} \bar{Q}_{\bf x}^{A,\delta}$
where
$$
\ran \bar{Q}_{\bf x}^{A,\delta}, \ran \bar{Q}_{\bf x}^{B,\delta} \subset \mathcal{S}^{0,q}_{\tria_S^\delta,0},\quad \big\langle \mathcal{S}^{0,q}_{\tria^\delta,0}+\mathcal{S}^{-1,q-1}_{\tria^\delta}, \ran (\identity-\bar{Q}_{\bf x}^{B,\delta})\big\rangle_{L_2(\Omega)}=0.
$$
Then from $\identity- \bar{Q}_{\bf x}^\delta=(\identity- \bar{Q}_{\bf x}^{B,\delta})(\identity- \bar{Q}_{\bf x}^{A,\delta})$,
we infer that \eqref{33} is satisfied.

We take $\bar{Q}_{\bf x}^{A,\delta}$ to be the Scott-Zhang quasi-interpolator onto $\mathcal{S}^{0,q}_{\tria_S^\delta,0}$, and $\bar{Q}_{\bf x}^{B,\delta}=P_q^\delta$ from Remark~\ref{remmie}. Writing $\bar{Q}_{\bf x}^\delta=\bar{Q}_{\bf x}^{A,\delta}+P_q^\delta(\identity-\bar{Q}_{\bf x}^{A,\delta})$,
the uniform boundedness of $\bar{Q}_{\bf x}^{A,\delta} \in \cL(H^1_0(\Omega),H^1_0(\Omega))$,
$\hbar_\delta^{-1}(\identity-\bar{Q}_{\bf x}^{A,\delta}) \in \cL(H^1_0(\Omega),L_2(\Omega))$,
as well as $\hbar_\delta^{-1} P_q^\delta \hbar_\delta \in \cL(L_2(\Omega),L_2(\Omega))$,
and $\|\cdot\|_{H^1(\Omega)} \lesssim \|h_\delta^{-1} \cdot\|_{L_2(\Omega)}$ on $\mathcal{S}^{0,q}_{\tria_S^\delta,0}$, imply the uniform boundedness of $\bar{Q}_{\bf x}^\delta \in \cL(H^1_0(\Omega),H^1_0(\Omega))$.

For any $p_i \in {\mathcal P}_q(t_i^\delta,t_{i+1}^\delta) \otimes L_2(\Omega)$, $\tilde{p}_i \in L_2(t_i^\delta,t_{i+1}^\delta) \otimes {\mathcal S}_{\tria^{\delta_i}}^{-1,q-1}$, and $y \in Y$, we have
\begin{align*}
 \big\langle g,(\identity - \bar{Q}^\delta)y \big\rangle_{L_2(I \times \Omega)}
=
\sum_i \sum_{T' \in \tria^{\delta_i}}\big\langle ((\identity -Q_q^i)\otimes \identity) (g- p_i),y \big\rangle_{L_2((t_i^\delta,t_{i+1}^\delta) \times T')}\\
+
\sum_i \sum_{T' \in \tria^{\delta_i}} \big \langle (\identity \otimes (\identity-P^\delta_q)')(g-\tilde{p}_i),Q_q^i \otimes (\identity-\bar{Q}_{\bf x}^{A,\delta_i}) y \big\rangle_{L_2((t_i^\delta,t_{i+1}^\delta) \times T')},
\end{align*}
since ${\mathcal P}_q(t_i^\delta,t_{i+1}^\delta)$ is reproduced by $Q_q^i$, and ${\mathcal S}_{\tria_S^{\delta_i}}^{-1,q-1}$ by ${P^\delta_q}'$.
The first double sum is bounded by a constant multiple of
$\sqrt{\sum_i \sum_{T' \in \tria^{\delta_i} }
\|g- p_i\|^2_{L_2((t_i^\delta,t_{i+1}^\delta) \times T')}}\,\|y\|_{L_2(I \times \Omega)}$.
On account of $\|\hbar_\delta {P_q^\delta}' \hbar_\delta^{-1}\|_{\cL(L_2(\Omega),L_2(\Omega))} \lesssim 1$ and
$$
\|(\identity-\bar{Q}_{\bf x}^{A,\delta_i}) v\|_{L_2(T')} \leq (\diam T')|v|_{H^1(\cup_{\{T'' \in \tria^{\delta_i}\colon T'' \cap T' \neq \emptyset\}}T'')},
$$
one infers that the second double sum can be bounded by a constant multiple of
$\sqrt{\sum_i \sum_{T' \in \tria^{\delta_i}}
(\diam T')^2 \|g- {\tilde p}_i\|^2_{L_2((t_i^\delta,t_{i+1}^\delta) \times T')}}\,\|y\|_{Y},
$
which completes the proof.
\end{proof}

%\newpage

\section{Numerical experiments} \label{sec:numexp}
%\rs{In this section only there are many  non-coloured changes}
 In this section we investigate our two formulations for solving the data assimilation problem numerically.
As underlying parabolic equation we select a simple heat equation posed on a spatial
domain $\Omega \subset \R^d$, and we take $T=1$, i.e.~$I = [0,1]$.

We use \emph{NGSolve}, \cite{247.06,247.065}, to assemble the system matrices  {and for spatial multigrid}.
We employ a preconditioned conjugate gradient scheme for solving the corresponding Schur
complement systems \eqref{Schur2ndorder} from \S\ref{Sprac1} and \eqref{Schur1storder} from \S\ref{Sprac2}.

%\jnew{With a tensor-product partition of the space-time cylinder, constructing
%${\bm K}^\delta_Y$ is straight-forward. Writing $Y^\delta = Y^\delta_t \otimes Y^\delta_x$
%with basis $\Phi^{Y^\delta} := \Phi^{Y^\delta}_t \otimes \Phi^{Y^\delta}_x$,
%we have $\bm{R}^\delta = {\bm M}_t \otimes {\bm A}_x$ with ${\bm M}_t := \langle \Phi^{Y^\delta}_t, \Phi^{Y^\delta}_t \rangle_{L_2(I)}$ and ${\bm A}_x := \langle \Phi^{Y^\delta}_x, \Phi^{Y^\delta}_x \rangle_{H^1_0(\Omega)}$.
%When $\Phi^{Y^\delta}_t$ is $L_2(I)$-orthogonal, ${\bm M}_t$ is diagonal and can be inverted directly.
%With ${\bf MG}^\delta_x \eqsim {\bm A}_x^{-1}$ a symmetric multigrid solver,
%we define ${\bm K}^\delta_Y := {\bm M}_t^{-1} \otimes {\bf MG}^\delta_x \eqsim ({\bm R}^\delta)^{-1}$,
%which can be applied at linear cost. Construction of ${\bm K}^\delta_Z$ is similar.
%Details for constructing ${\bm K}^\delta_X$ as a wavelet-in-time, multigrid-in-space
%preconditioner can be found in~\cite{vVW20}.}

\subsection{Unit interval}
We start with the simplest possible situation where $d=1$, and $\Omega := [0,1]$.
We subdivide $I$ and $\Omega$ into $1/h_\delta \in \N$ equal subintervals yielding
$\mathcal I^\delta$ and $\tria^\delta$ respectively.
We then select our discrete function spaces as tensor-product spaces of the form
\be \label{numspaces}
  X^\delta := \mathcal S^{0, 1}_{\mathcal I^\delta} \otimes \mathcal S^{0,1}_{\tria^\delta, 0}, ~~
  Y^\delta_\ell := \bar Y^\delta_\ell := \mathcal S^{-1, 1}_{\mathcal I^\delta} \otimes \mathcal S^{0,1}_{\tria^\delta_{\ell}, 0}, ~~
  Z^\delta := \mathcal S^{-1, 0}_{\mathcal I^\delta} \otimes \mathcal S^{0,1}_{\tria^\delta}
\ee
with $\tria^\delta_{\ell}$ constructed from $\tria^\delta$ by
 {recursively bisecting every subinterval
$\ell$ times.}

As follows from Sect.~\ref{sec:Fortin}, in our current setting, for both second order and FOSLS formulation, for
$\ell \geq 2$ uniformly bounded Fortin interpolators exist, i.e., \eqref{fortin} or \eqref{fortin2} are satisfied,
so that the minimizers $u_\eps^{\delta,\delta} \in X^\delta$ and $(\bar{u}_\eps^{\delta,\delta},\vec{p}_\eps^{\delta,\delta})\in X^\delta \times Z^\delta$ of $G_\eps^\delta$ or $H_\eps^\delta$ exist uniquely, and satisfy the a priori bounds from Theorem~\ref{thm1} or Theorem~\ref{thm10}, as well as the a posteriori bounds from Corollary~\ref{corol1} and Proposition~\ref{prop11}.
Moreover, these Fortin interpolators can be selected such that for sufficiently smooth datum $g$ the order of the data-oscillation term $e^\delta_\osc(g)$ or $\bar{e}^\delta_\osc(g)$, that are present in the a posteriori bounds, exceeds the generally best possible approximation order that can be expected.
Consequently, for $\ell=2$ the expressions
$\sqrt{G_{ {0}}^\delta(u_\eps^{\delta,\delta})}$ or $\sqrt{H_{{0}}^\delta(\bar u_{\eps}^{\delta,\delta}, \vec p_{\eps}^{\delta,\delta})}$
are, modulo a constant factor and oscillation terms of higher order, upper bounds for the $X_\eta$-norm of $e_\eps^\delta := u_0 - u_\eps^{\delta,\delta}$
or $\bar e_\eps^\delta := u_0 - \bar u_\eps^{\delta,\delta}$, respectively.

We will use this fact to explore in subsequent experiments also whether it would actually be harmful in practice to take $\ell {<} 2$
(resulting in lower computational cost). Note that the choice of the refinement level $\ell$ in $Y^\delta_\ell$ or $\bar{Y}^\delta_\ell$
affects, on the one hand, the quality of the numerical solution $u^{\delta,\delta}_\eps$ and, on the other hand, the reliability
of the a posteriori error bound. We will denote below by $\ell$ the refinement level used to compute $u^{\delta,\delta}_\eps$, and
by $L$ the refinement level  in $Y^\delta_L$ or $\bar{Y}^\delta_L$ used to compute the a posteriori error bounds.
Since these `reliable' a posteriori error bounds with $L=2$ apply to any function from $X^\delta$ (taking for the second argument of $H_\eps^\delta$ any argument from $Z^\delta$),
we have also used them, in particular, to assess the quality of the numerical approximations based on taking $Y^\delta_0$ or $\bar{Y}^\delta_0$ instead of $Y^\delta_2$ or $\bar{Y}^\delta_2$.

Equipping $\mathcal S^{-1,1}_{\mathcal I^\delta}$ with basis $\Phi_t^\delta$, and $\mathcal S^{0,1}_{\tria^\delta_{\ell}, 0}$ with $\Phi_{\bf x}^\delta$, the representation of the Riesz isometry $Y^\delta \rightarrow {Y^\delta}'$ reads as
${\bm R}^\delta=\langle \Phi_t^\delta, \Phi_t^\delta\rangle_{L_2(I)} \otimes \langle \nabla \Phi_{\bf x}^\delta, \nabla \Phi_{\bf x}^\delta\rangle_{L_2(\Omega)^d}$. Taking $\Phi_t^\delta$ to be $L_2(I)$-orthogonal, the first factor is diagonal and can be inverted directly. With ${\bf MG}^\delta_{\bf x} \eqsim \langle \nabla \Phi_{\bf x}^\delta, \nabla \Phi_{\bf x}^\delta\rangle_{L_2(\Omega)^d}^{-1}$ a symmetric spatial multigrid solver,
we define ${\bm K}^\delta_Y :=\langle \Phi_t^\delta, \Phi_t^\delta\rangle_{L_2(I)}^{-1}  \otimes {\bf MG}^\delta_{\bf x} \eqsim ({\bm R}^\delta)^{-1}$,
which can be applied at linear cost.
As explained in \S\ref{Sprac1} and \S\ref{Sprac2}, all considerations concerning the discrete approximations $u_\eps^{\delta,\delta}$ or $(\bar{u}^{\delta,\delta}_\eps,\vec{p}^{\delta,\delta}_\eps)$ remain valid when ${\bm R}^\delta$ in the matrix vector systems \eqref{DS} or \eqref{DS2nd}, that define these approximations, is replaced by $({\bm K}_Y^\delta)^{-1}$, and despite this replacement we continue to denote them by $u_\eps^{\delta,\delta}$ and $(\bar{u}^{\delta,\delta}_\eps,\vec{p}^{\delta,\delta}_\eps)$.

Equipping $X^\delta$ and $Z^\delta$ with similar tensor product bases, for the efficient iterative solution of the Schur complements \eqref{Schur2ndorder} or \eqref{Schur1storder} that define $u_\eps^{\delta,\delta}$ or $(\bar{u}^{\delta,\delta}_\eps,\vec{p}^{\delta,\delta}_\eps)$, for $W \in \{X,Z\}$
we use a preconditioner $\bm{K}_W^\delta$ that can be applied at linear cost and that is uniformly spectrally equivalent to the inverse of the representation of the Riesz isometry $W^\delta \rightarrow {W^\delta}'$. The construction of $\bm{K}_Z^\delta$ does not pose any difficulties, and for the construction of $\bm{K}_X^\delta$, that builds on a symmetric spatial multigrid solver that is robust for diffusion-reaction problems and the use of a wavelet basis in time that is stable in $L_2(I)$ and $H^1(I)$, we refer to \cite{249.991}.

\subsubsection{Consistent data} \label{sec:1d-consistent}
As a first test we prescribe the solution
\be \label{numtest}
u(t,x)=(t^3+1)\sin(\pi x),
\ee
take $\omega=[\frac14,\frac34]$ and use
data $(g,f)$ that are \emph{consistent} with $u$.
We computed $u_\eps^{\delta,\delta}$ and $(\bar u_{\eps}^{\delta,\delta}, \vec p_{\eps}^{\delta,\delta})$ for $\eps=h_\delta$, being the largest value of $\eps$ (up to a constant factor) for which we expect that the regularization doesn't spoil the order of convergence. Indeed, we expect that $e^\delta_\ap(u) \eqsim \bar{e}^\delta_\ap(u) \eqsim h_\delta$.
For both the second order and the FOSLS formulation, Figure~\ref{fig:1d-data} depicts the a posteriori error estimators $\sqrt{G_{{0}}^\delta(u_\eps^{\delta,\delta})}$ and $\sqrt{H_{{0}}^\delta(\bar u_{\eps}^{\delta,\delta}, \vec p_{\eps}^{\delta,\delta})}$ for $(\ell,L) \in \{(2,2), (0,2), (0,0)\}$
as a function of
$\dim X^\delta \approx h_\delta^{-2}$. The two formulations show very similar performance.
Moreover, the observed convergence rate $1/2$ is the best possible given our
discretization of piecewise linears on uniform meshes.
Concerning the choices for $\ell$ and $L$,
the results for $L=2$, that give reliable a posteriori error bounds, indicate that there is hardly any difference in the numerical approximations for test spaces $Y^\delta_2$ or $Y^\delta_0$,
respectively, $\bar{Y}^\delta_2$ or $\bar{Y}^\delta_0$, i.e., for $\ell=2$ or $\ell=0$,  so that we will take $\ell=0$ in the sequel.
 For the second order formulation, the value of the a posteriori estimator evaluated for $L=0$ is significantly smaller than that for $L=2$, but it shows qualitatively the same behaviour.
In view of  this observation, we  will also use $L=0$ in what follows.
\begin{figure}
  \begin{center}
    \includegraphics[width=\linewidth]{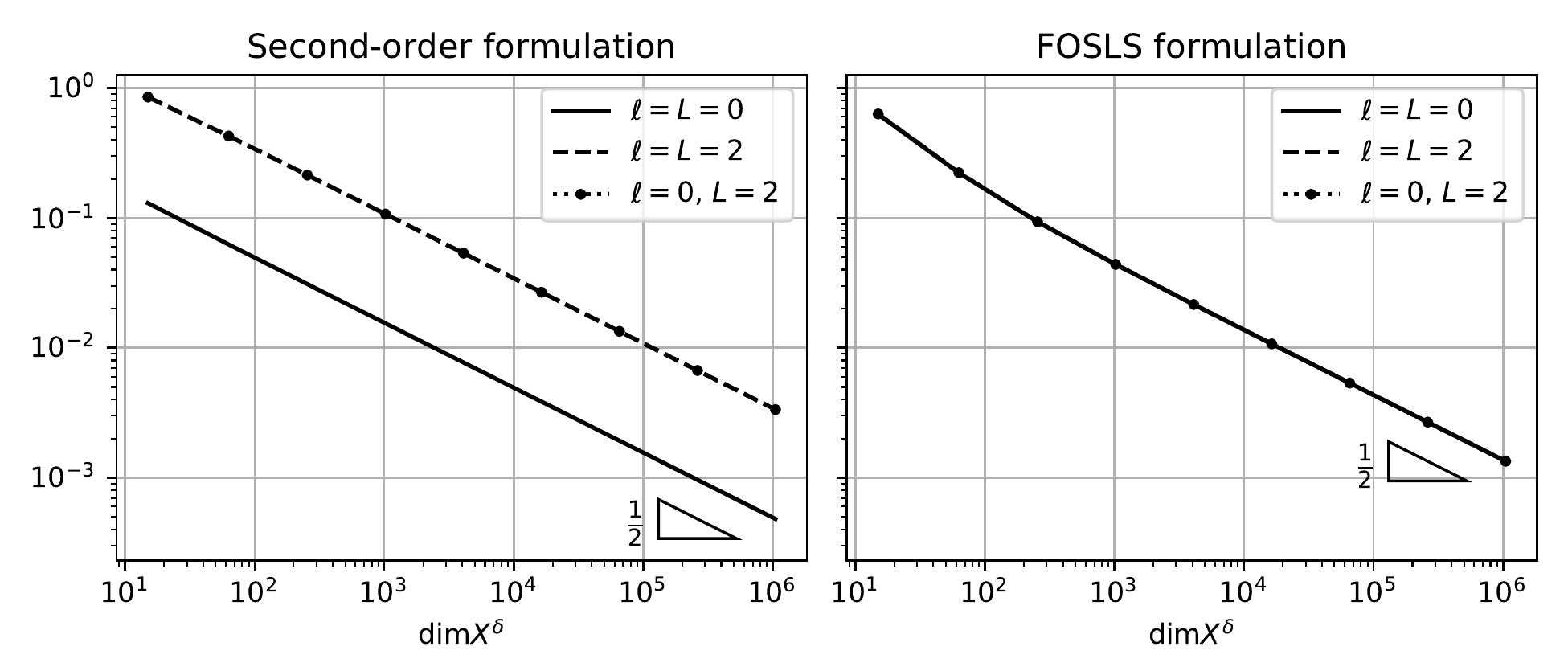}
  \end{center}
  \vspace{-1em}
  \caption{A posteriori error estimators for the unit interval problem with consistent data.}
\label{fig:1d-data}
\end{figure}

\subsubsection{System conditioning}
To see how the choice of $\eps$ affects the condition number of the preconditioned
systems~\eqref{Schur2ndorder} and~\eqref{Schur1storder}, we computed these
condition numbers for various $\eps$ and decreasing mesh sizes.
The results depicted in Figure~\ref{fig:1d-conditioning} illustrate that for constant $\eps >0$, the condition numbers are %indeed
uniformly bounded. We show the values for $\ell=2$; for $\ell=0$, the values are very similar.
It also reveals that the growth in terms of $\eps$ is far more modest than the upper bound $\eqsim \eps^{-2}$ on these condition numbers that we found in Sect.~\ref{Sprac1} and~\ref{Sprac2}.
\begin{figure}
  \begin{center}
    \includegraphics[width=\linewidth]{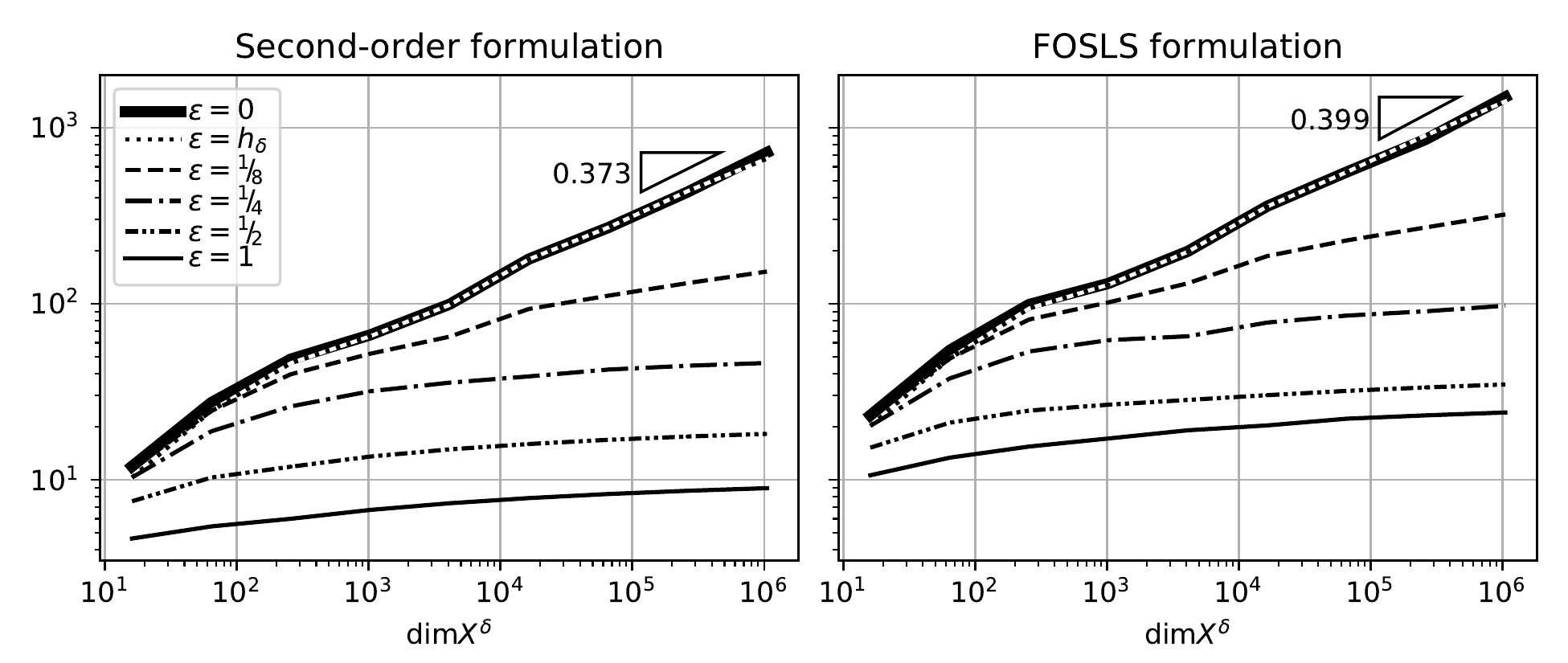}
  \end{center}
  \vspace{-1em}
  \caption{Condition numbers of the preconditioned system for a number of regularization parameters.}
\label{fig:1d-conditioning}
\end{figure}

\subsubsection{Inconsistent data}
\label{S1d-inconsistent}
In case of inconsistent data, there exists no state that exactly explains the data, and $e_\cons(u_0)>0$.
In this case, it does not make sense to approximate $u_0$ within a tolerance that is significantly smaller than $e_\cons(u_0)$.
Considering  for the second order formulation the a priori estimate
$$
\|u_0-u^{\delta,\delta}_\eps\|_{X_\eta} \lesssim  e_\cons(u_0) + e^\delta_\ap(u_0) +\eps\|\gamma_0 u_0\|_{L_2(\Omega)}
$$
from Theorem \ref{thm1} and taking the fact into account that choosing $\eps$ small has an only moderate effect on the conditioning of the preconditioned linear system, in the following we take $\eps$ of the order of the best possible approximation error that can be expected, so that $\eps\|\gamma_0 u_0\|_{L_2(\Omega)} \lesssim  e^\delta_\ap(u_0)$.
Then ideally we would like to stop refining our mesh as soon as $e^\delta_\ap(u_0)  \approx  e_\cons(u_0)$.
In order to achieve this we use the a posteriori error estimator.
From Corollary~\ref{corol} we know that
$$
e_\cons(u_0) \lesssim \sqrt{G_0^\delta(u^{\delta,\delta}_\eps)}  + e^\delta_\osc(g),
$$
where, following the reasoning from the proof of Proposition~\ref{prop2},
\begin{align*}
\sqrt{G_0^\delta(u^{\delta,\delta}_\eps)}&\leq
\sqrt{G_\eps^\delta(u^{\delta,\delta}_\eps)} \leq
\sqrt{G_\eps^\delta(P_{X^\delta}u_0)}
\leq \sqrt{G_\eps(P_{X^\delta}u_0)} \\
&\lesssim e^\delta_\ap(u_0)+ e_\cons(u_0)+\eps\|\gamma_0 u\|_{L_2(\Omega)}.
\end{align*}
We selected $(Y^\delta)_{\delta \in \Delta}$ such that, in any case for sufficiently smooth $g$, the order of $e^\delta_\osc(g)$ is equal or higher than the generally best possible order of the approximation error, so that $e^\delta_\osc(g) \lesssim  e^\delta_\ap(u_0)$. In view of our earlier assumption on $\eps$, we conclude that
$$
e_\cons(u_0) \lesssim \sqrt{G_0^\delta(u^{\delta,\delta}_\eps)} \lesssim e_\cons(u_0)+e^\delta_\ap(u_0).
$$

Exploiting a common uniform or adaptive refinement strategy, it can be expected that $e^\delta_\ap(u_0)$ decays with a certain algebraic rate $\rho<1$. Unless $e_\cons(u_0)$ is very large, it can therefore be expected that in the early stage of the iteration the a posteriori error estimator $\sqrt{G_0^\delta(u^{\delta,\delta}_\eps)}$ decays with this rate, whose value therefore can be monitored. By contrast, as soon as $e^\delta_\ap(u_0)$ has been reduced to $C e_\cons(u_0)$ for some constant $C>0$, the reduction of $\sqrt{G_0^\delta(u^{\delta,\delta}_\eps)}$ in the next step cannot be expected to be better than $\frac{1+C \rho}{1+C}$.
Taking $C=1/3$, our strategy will therefore be to stop the iteration as soon as the observed reduction of $\sqrt{G_0^\delta(u^{\delta,\delta}_\eps)}$  {is worse than} $\frac{1+C \rho}{1+C}$.

We have implemented this strategy, and a similar one for the FOSLS formulation, where we apply the discrete spaces as in \eqref{numspaces}, take $\eps=h_\delta$, and
again consider the unit interval problem \eqref{numtest} but now perturb the measured state $f=u|_{I \times \omega}$ by adding $\lambda \mathbb{1}$ to it for various values of $\lambda$.

{From the results in Figure~\ref{fig:1d-data-inconsistent}} we see that the error estimators decrease at first, but then stagnate in the aforementioned sense, at which point we exit the refinement loop (indicated by a $\times$-sign). Further refinement (indicated by the thin dashed lines) is not very useful, and the error estimators stabilize to a value just below $\lambda |\omega|^{1/2}$, being the $L_2(I \times \Omega)$-norm of the perturbation we added to the consistent $f$. Knowing that the error estimator converges to $e_\cons(u_0)$ (see Remark~\ref{rem:Gamma2}), we conclude that $(0, \mathbb{1}) \in Y'\times L_2(I \times \omega)$ is close to being orthogonal to $\ran B_\omega$.
We note that selecting $\ell=L=2$ produces very similar results.
\begin{figure}
  \begin{center}
    \includegraphics[width=\linewidth]{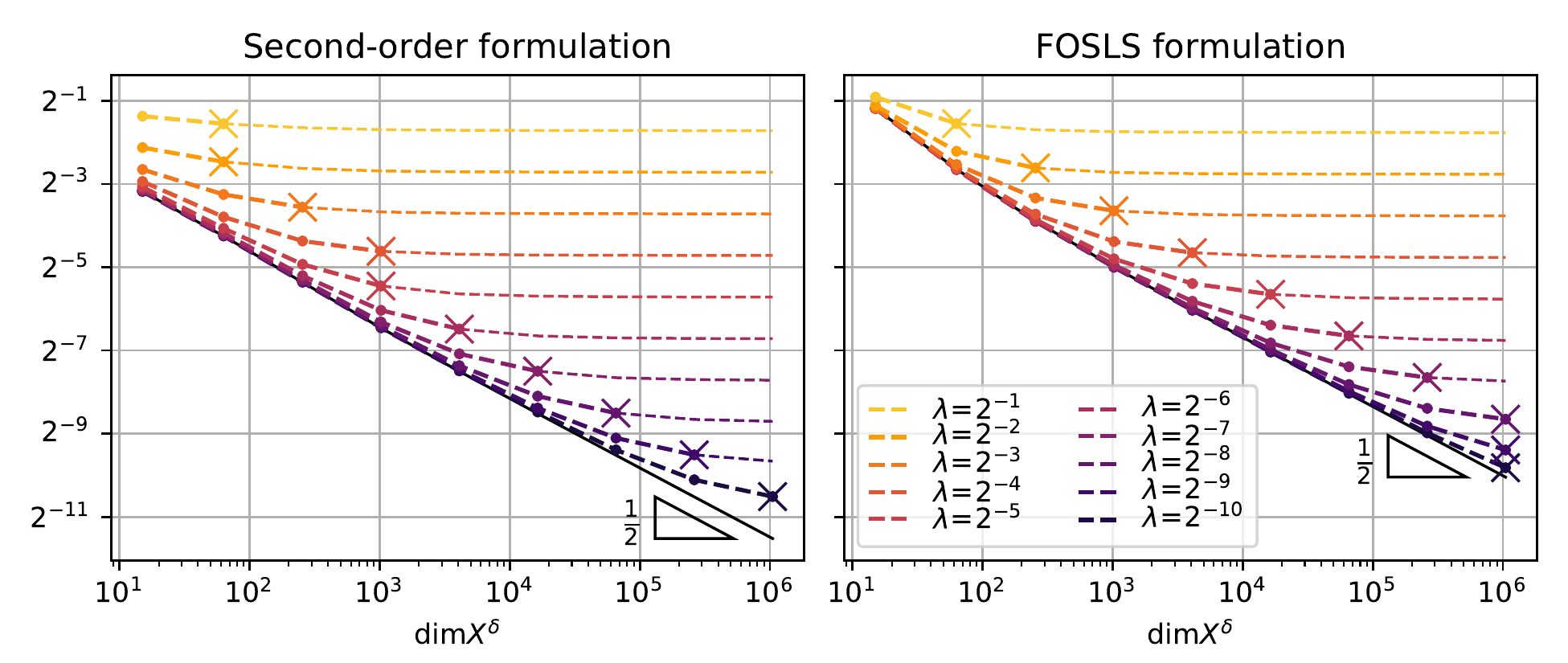}
  \end{center}
  \vspace{-1em}
  \caption{A posteriori error estimators for the unit interval problem with inconsistent data of varying amounts.}
\label{fig:1d-data-inconsistent}
\end{figure}

\subsection{Unit square}
\label{S2d-square}
 {We choose $\Omega := (0,1)^2$.}
We again subdivide $I$ into $1/h_\delta \in \N$ equal subintervals yielding $\mathcal I^\delta$, and
$\Omega$ first into $1/h_\delta \times 1/h_\delta$ squares and then into $2/h_\delta^2$
triangles by connecting the lower left and the upper right corner in each square yielding $\tria^\delta$. For a polynomial degree $q$, we take $\eps = h_\delta^q$.
 {Following the discussion in Sect.~\ref{sec:1d-consistent}, we select $\ell=L=0$ and take our discrete spaces as}
\[
  X^\delta_q := \mathcal S^{0, q}_{\mathcal I^\delta} \otimes \mathcal S^{0,q}_{\tria^\delta, 0}, ~~
  Y^\delta_q := \mathcal S^{-1, q}_{\mathcal I^\delta} \otimes \mathcal S^{0,q}_{\tria^\delta, 0}, ~~
  \bar Y^\delta_q := \mathcal S^{-1, q-1}_{\mathcal I^\delta} \otimes \mathcal S^{0,q}_{\tria^\delta, 0}, ~~
  Z^\delta_q := \mathcal S^{-1, q-1}_{\mathcal I^\delta} \otimes \mathcal Z^q_{\tria^\delta}
\]
where $\mathcal Z^q_{\tria^\delta}$ is the BDM space of index $\min(1, q-1)$.
Note that the degree $q-1$ in the temporal direction of $\bar{Y}_q^\delta$ guarantees an oscillation error of the same order as the approximation error, cf. Footnote~\ref{foot5}.

We define the preconditioners $\bm{K}^\delta_Y$, $\bm{K}^\delta_Z$, and $\bm{K}^\delta_X$ similar as in the 1D case.

\subsubsection{Consistent data}
We start with $\omega := [\tfrac{1}{4}, \tfrac{3}{4}]^2$ with the  prescribed solution
$u(t,x,y) := (t^3 + 1) \sin(\pi x) \sin(\pi y)$ and consistent data $(g, f)$.
Figure~\ref{fig:2d-unit-square} shows for both formulations and $q \in \{1,2\}$
the error estimators as a function of $\dim X^\delta \eqsim h_\delta^{-3}$.
The choice of preconditioners allows to reach the desired tolerance $\langle {\bf r}, {\bf K}_X^\delta {\bf r} \rangle \leq \eps^2 G_0^\delta(\tilde u_\eps^{\delta,\delta}) = 5.937 \cdot 10^{-13}$ for a system with $268\,434\,945$ unknowns in only 96 iterations.
The two formulations again exhibit similar performance, and the observed rate $q/3$
is the best possible, in line with Theorem~\ref{thm3}. Moreover, we see that
while theory is incomplete for the second order formulation in practice it works well also for piecewise quadratics.
\begin{figure}
\begin{center}
  \includegraphics[width=\linewidth]{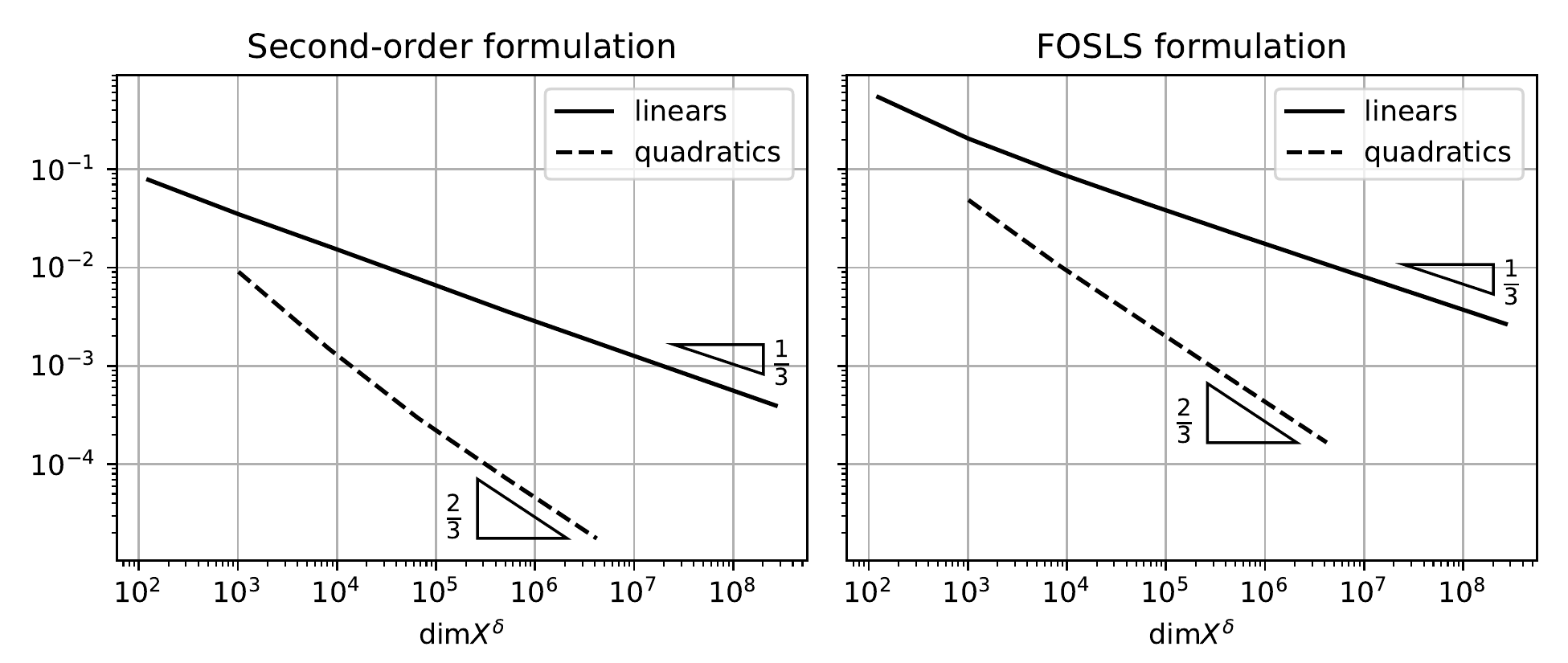}
\end{center}
\vspace{-1em}
  \caption{A posteriori error estimators for the consistent unit square problem with $\omega := [\tfrac{1}{4},\tfrac{3}{4}]^2$ using piecewise linears and quadratics.}
\label{fig:2d-unit-square}
\end{figure}

Thanks to $X_\eta \hookrightarrow C([\eta,1],L_2(\Omega))$, the time-slice errors $\|e_\eps^\delta(t)\|_{L_2(\Omega)}$
or $\|\bar e_\eps^\delta(t)\|_{L_2(\Omega)}$ are bounded by multiples of
$\|e_\eps^\delta\|_{X_\eta}$ or $\|\bar e_\eps^\delta\|_{X_\eta}$, respectively.
Figure~\ref{fig:2d-unit-timeslices} shows these time-slice errors for both formulations
using piecewise linears, i.e.,~$q=1$. We see that for both formulations, the
time-slice errors converge with the better rate $2/3$, and that these errors deteriorate
for $t \searrow 0$.
\begin{figure}
  \begin{center}
    \includegraphics[width=\linewidth]{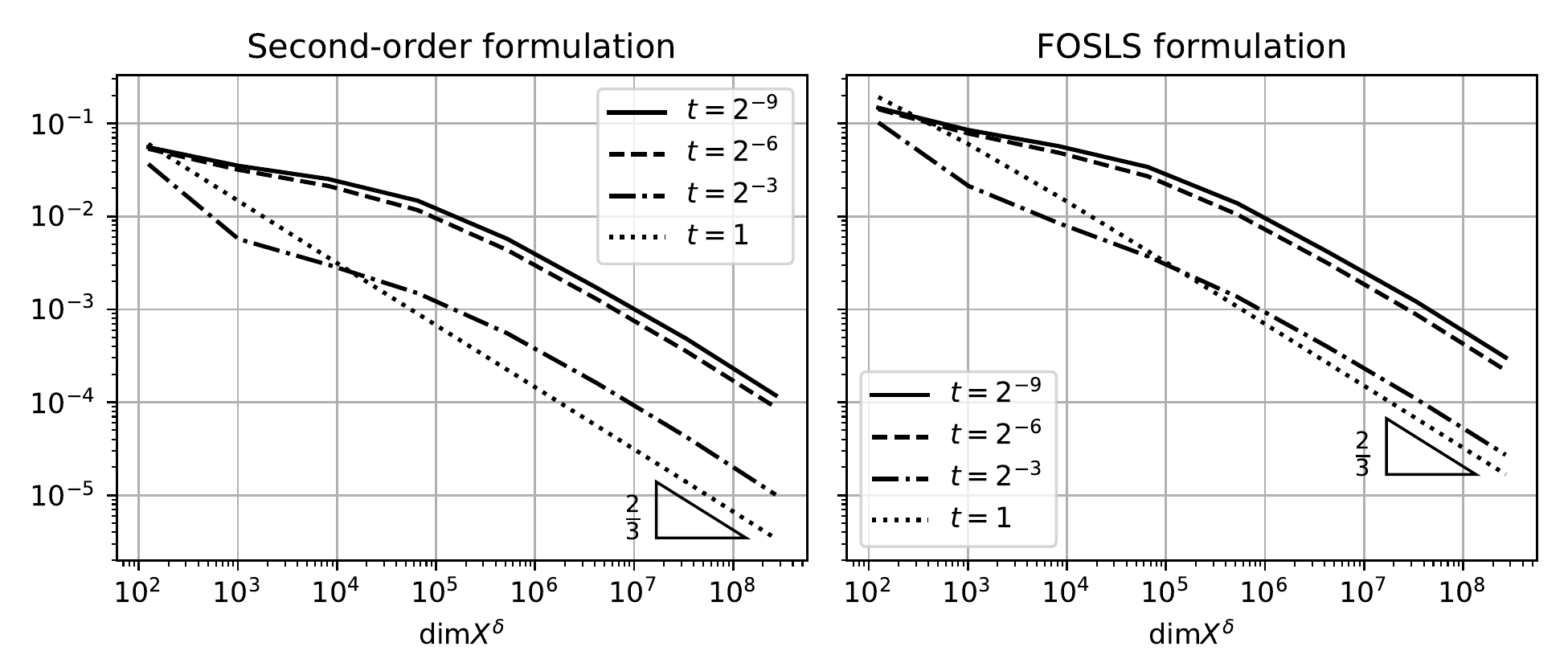}
  \end{center}
  \vspace{-1em}
  \caption{Time-slice errors for the consistent unit square problem with $\omega := [\tfrac{1}{4},\tfrac{3}{4}]^2$ using piecewise linears.}
\label{fig:2d-unit-timeslices}
\end{figure}

This deterioration becomes much stronger when $\diam \omega \to 0$: taking for
example $\omega := [\tfrac{7}{16},\tfrac{9}{16}]^2$, Figure~\ref{fig:2d-omega-smaller}
shows that while the error estimators remain nearly unchanged, the time-slice
errors fan-out an order of magnitude more than in the case of $\omega := [\tfrac{1}{4},\tfrac{3}{4}]^2$.
\begin{figure}
  \begin{center}
    \includegraphics[width=\linewidth]{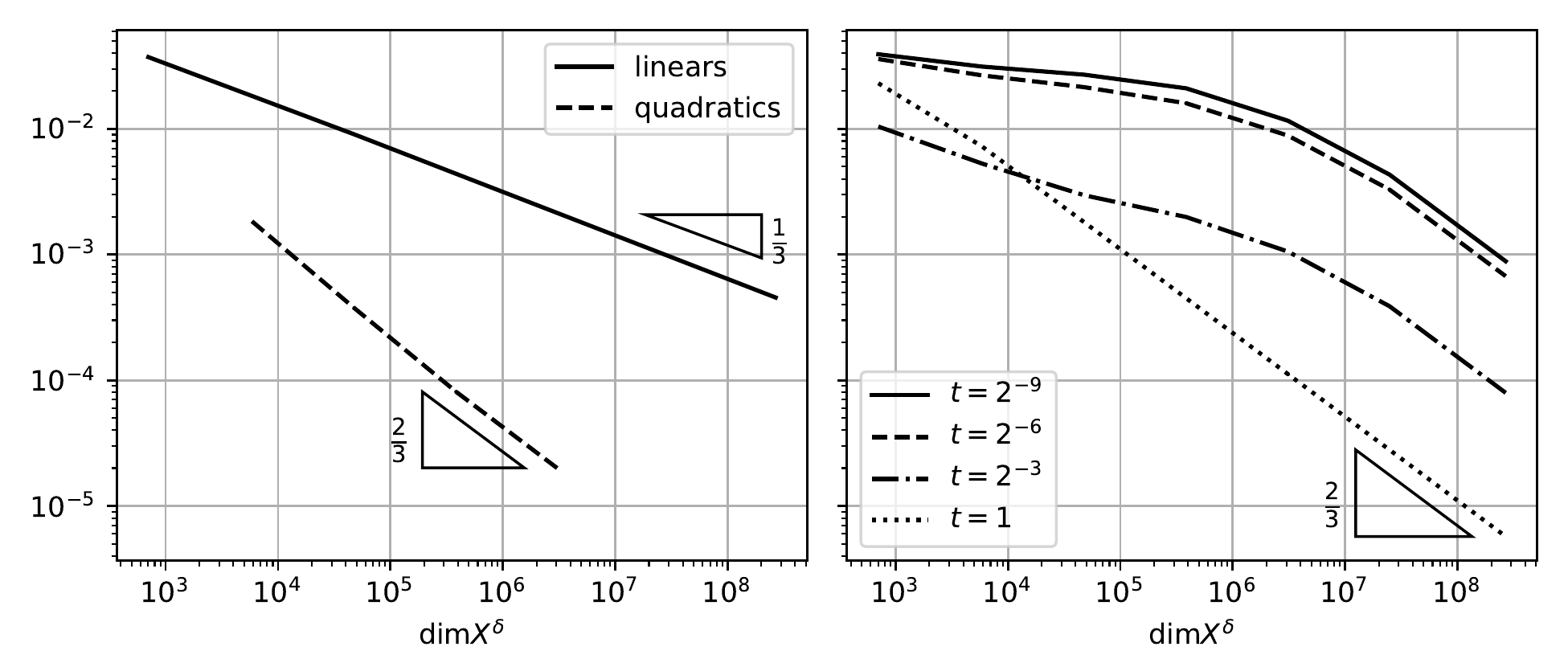}
  \end{center}
  \vspace{-1em}
  \caption{Second order formulation for the consistent unit square problem with $\omega := [\tfrac{7}{16},\tfrac{9}{16}]^2$. Left: error estimators; right: time-slice errors.}
\label{fig:2d-omega-smaller}
\end{figure}

\subsubsection{Inconsistent data}
 {Finally, we return to the case of inconsistent observational data. Again taking
$u(t,x,y) := (t^3 + 1) \sin(\pi x) \sin (\pi y)$ and $\omega := [\tfrac{1}{4}, \tfrac{3}{4}]^2$,
we select consistent forcing data $g := B u$ but \emph{perturbed} observational data
$f := u|_{I \times \omega} + \lambda \mathbb{1}$. Running the strategy outlined
in Sect.~\ref{S1d-inconsistent} with $C = 1/3$, with uniform refinements and choosing
$\eps = h_\delta$, yields the results of Figure~\ref{fig:2d-data-inconsistent}.
We see a situation very similar to the unit interval case: the error estimators
decrease at first and then stagnate, at which point we exit the refinement loop.
Error estimators again stabilize at around $\lambda |\omega|^{1/2}$.
\begin{figure}
  \begin{center}
    \includegraphics[width=\linewidth]{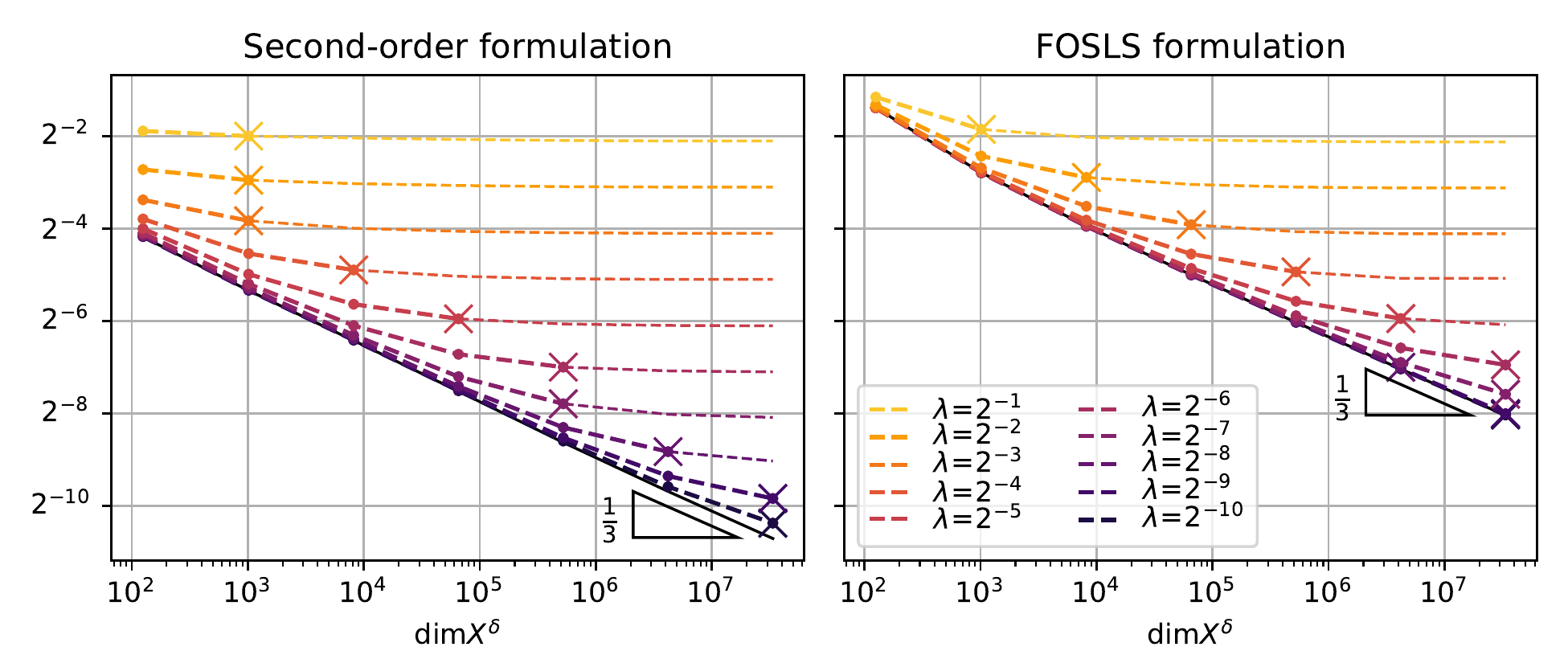}
  \end{center}
  \vspace{-1em}
  \caption{A posteriori error estimators for the unit square problem with inconsistent data of varying amounts.}
\label{fig:2d-data-inconsistent}
\end{figure}}

%\newpage

\section{Concluding Remarks} \label{sec:outlook}

%%%%%%%%%%%%%%%%%%%%%%%%%%%%%%
We have seen that basing data assimilation for parabolic problems on infinite-dimensional stable time-space formulations
and related regularized least squares functionals has a number of conceptual advantages:
one obtains improved a  {priori} error estimates as well as a posteriori error bounds. Among other things the latter ones
are important for determining suitable stopping criteria for iterative solvers. Moreover, the design of corresponding preconditioners
is based on the infinite-dimensional variational formulation. We have shown that for each fixed regularization parameter
$\eps$ the preconditioner is optimal relative to the condition of the regularized problem so that the numerical complexity
remains under control.  Moreover,
the regularization parameter is disentangled from the discretizations which offers possibilities of optimizing its choice.
%In fact, exploiting the obtained a posteriori information to properly interrelate regularization discretization and iterative solution is one item in forthcoming work based
%on the presented framework.

Furthermore, it will be interesting to relate the present results to the recent
state estimation concepts in \cite{22.56,45.48,192.5} providing error bounds in the full energy norm
$\|\cdot\|_X$ at the expense of certain stability factors reflecting a geometric relation between $X$ and
a certain space of functionals providing the data which, in turn, quantifies
the ``visibility'' of the true states by the sensors. A further important issue is to explore the use of  the obtained ``static'' methods
for ``dynamic data assimilation''. In this context the underlying stable variational formulations are expected to
be crucial for the use of certified reduced models.

% \rs{we have already discussed the  F\"{u}hrer and Karkulik
% approach in Remark~\ref{rem:KarkulikFuehrer}. I wonder whether it is a good idea to give it here such a prominent place}
 A price for building on the above  ``natural'' variational formulations -- in the sense that no {\em excess regularity} is implied --
is to properly discretize dual norms.
As pointed out earlier in Remark~\ref{rem:KarkulikFuehrer}, this is avoided in
% Considering for simplicity the heat equation ($K=\identity$, $\vec{b}=0$, $c=0$),
 % F\"{u}hrer and Karkulik
 \cite{75.257} by replacing the term $\|C(w,\vec{q})-\tilde{g}\|_{Y'}^2$ in $H_\eps(w;\vec{q})$ (for $K= \identity$) by
 the $L_2$-residual     $\|C(w,\vec{q})-\tilde{g}\|_{L_2(I;L_2(\Omega))}^2$.
%
% have proposed recently a least squares formulation for the heat equation that would entail exchanging
%the functional  $H_{\eps}$ from \eqref{Heps}  with
%\begin{align*}
%&H_\eps(w,\vec{q}):={\textstyle \frac{1}{2}}\Big(\|C(w,\vec{q})-\tilde{g}\|_{L_2(I;L_2(\Omega))}^2+
% \| \vec{q}-\nabla_{\bf x} w\|_{Z}^2\\
%&\qquad\qquad\qquad\qquad\qquad\qquad+
%\|\Gamma_\omega w-\tilde{f}\|_{L_2(I \times \omega)}^2+ \eps^2\|\gamma_0 w\|_{L_2(\Omega)}^2\Big).
%\end{align*}
Being reduced to using then a somewhat weaker version of the Carleman estimate,
we would obtain a statement similar to that in Corollary \ref{cor:4.2}, but with an approximation
error $\bar{e}_{\ap}^\delta {(u)}$ measured in a somewhat stronger norm
\begin{align*}
&\min_{\{(w,\vec{q}) \in X^\delta \times Z^\delta\colon \partial_t w-\divv_{\bf x} \vec{q} \in L_2(I;L_2(\Omega))\}}  \|u-w\|_X+\|\nabla_{\bf x} u-\vec{q}\|_Z\\
&\qquad\qquad\qquad\qquad\qquad\qquad\qquad\qquad+
\|\partial_t u -\triangle u-(\partial_t w-\divv_{\bf x} \vec{q})\|_{L_2(I;L_2(\Omega))}.
\end{align*}

Finally,
%although this is outside the scope of this paper, we know how to build optimal preconditioners for the subspaces in $X$, $Y$, and $Z$ that we consider.
optimal preconditioning in the space $\{(w,\vec{q}) \in X^\delta \times Z^\delta\colon \partial_t w-\divv_{\bf x} \vec{q} \in L_2(I;L_2(\Omega))\}$ equipped with the graph norm,  seems to be a challenge.

%In this case one avoids the need to discretize the dual norms.

On the other hand, we also have the standard, second order formulation whose implementation is cheaper, and
at least in the above experiments performs well also in cases beyond the regime so far covered by theory.
%Perhaps more importantly,  in our first order formulation, we measure $C(w,\vec{q})-\tilde{g}$ in the weaker $Y'$-norm instead of in $L_2(I;L_2(\Omega))$.
%\rs{Between us, in experiments for the initial value problem we observed that this can hurt a lot}

%%%%%%%%%%%%%%%%%%%%%%%%%%%%%

\appendix
\section{Construction of the biorthogonal projector as in Remark~\ref{remmie} for $d=q=2$ and one red-refinement}
A basis for ${\mathcal S}_{\tria^\delta,0}^{0,2}+{\mathcal S}_{\tria^\delta}^{-1,1}$ is given by the sum of  the union over $T' \in \tria^\delta$ of the usual nodal basis for $P_1(T')$, and the union over the internal edges of $\tria^\delta$ of the continuous piecewise quadratic bubble associated to that edge, whose support extends to the two neighbouring triangles in $\tria^\delta$.
Indeed, one easily verifies that this set of functions is linearly independent, and that each function from either ${\mathcal S}_{\tria^\delta,0}^{0,2}$ or ${\mathcal S}_{\tria^\delta}^{-1,1}$ is in its span.

We consider the restriction of this basis to one $T' \in \tria^\delta$, and subsequently transfer it into a collection of functions on a
`reference triangle' $\hat{T}$ with $|\hat{T}|=1$ by an affine transformation. We denote the resulting functions as indicated in Figure~\ref{fig1}.
\begin{figure}
\begin{center}
\input{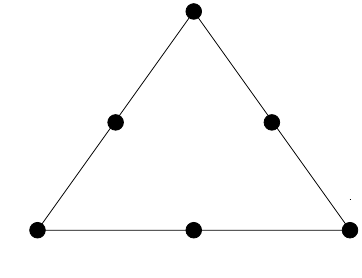_t}
\end{center}
\caption{Notation of the basis functions at the `primal side' with the indices of the
missing basis functions obtained by permuting the barycentric coordinates.}
\label{fig1}
\end{figure}
At the `dual side', we consider the nodal basis of the continuous piecewise quadratics w.r.t.~the red-refinement of $\hat{T}$, where we omit the basis functions associated to the vertices of $\hat{T}$.
We denote these basis functions as indicated in Figure~\ref{fig2}.
\begin{figure}
\begin{center}
\input{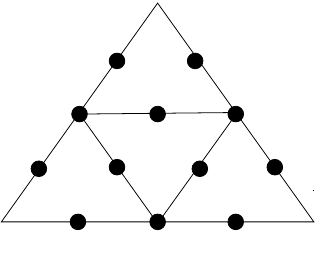_t}
\end{center}
\caption{Notation at the `dual side'.}
\label{fig2}
\end{figure}

We now apply the following transformations:
\begin{enumerate}
\item On the primal side, we redefine
$$
e_{\frac12,\frac12,0}\leftarrow e_{\frac12,\frac12,0}-{\textstyle \frac{7}{10}}(v_{1,0,0}+v_{0,1,0})+{\textstyle \frac{7}{30}}v_{0,0,1},
$$
and update $e_{\frac12,0\frac12}$ and $e_{0,\frac12,\frac12}$ analogously.
As a consequence, we obtain $\Span\{e_{\frac12,\frac12,0},e_{\frac12,0,\frac12},e_{0,\frac12,\frac12}\} \perp \Span\{\tilde{v}_{1,0,0},\tilde{v}_{0,1,0},\tilde{v}_{0,0,1}\}$.
\item On the dual side, we redefine
$$
 \tilde{e}_{\frac12,\frac12,0} \leftarrow {\textstyle \frac{1}{102}}\tilde{e}_{\frac12,\frac12,0}-{\textstyle \frac{7}{2312}}(\tilde{e}_{\frac34,\frac14,0}+\tilde{e}_{\frac14,\frac34,0}),
 $$
and update $\tilde{e}_{\frac12,0\frac12}$ and $\tilde{e}_{0,\frac12,\frac12}$ analogously.
Consequently, $\{e_{\frac12,\frac12,0},e_{\frac12,0,\frac12},e_{0,\frac12,\frac12}\}$ and $\{\tilde{e}_{\frac12,\frac12,0},\tilde{e}_{\frac12,0,\frac12},\tilde{e}_{0,\frac12,\frac12}\}$ became biorthogonal.
The functions $e_{\pi(\frac34,\frac14,0)}$ for any permutation $\pi$, will not play any  role anymore, and will be ignored.

\item On  the dual side, we redefine
$$
\left[\begin{array}{@{}c@{}} \tilde{v}_{1,0,0} \\ \tilde{v}_{0,1,0}\\\tilde{v}_{0,0,1}\end{array}\right] \leftarrow
  12\left[\begin{array}{@{}rrr@{}} 3 & -1 & -1\\ -1 & 3 & -1\\ -1 & -1 & 3\\ \end{array}\right]\left[\begin{array}{@{}c@{}} \tilde{v}_{1,0,0} \\ \tilde{v}_{0,1,0}\\\tilde{v}_{0,0,1}\end{array}\right].
$$
Consequently, $\{v_{1,0,0},v_{0,1,0},v_{0,0,1}\}$ and $\{\tilde{v}_{1,0,0},\tilde{v}_{0,1,0},\tilde{v}_{0,0,1}\}$ became biorthogonal.
\end{enumerate}
After these 3 steps,
the $6 \times 6$ `local generalized mass matrix'
that contains the $L_2(\hat{T})$-inner products between all primal functions, grouped into $v$- and $e$-functions, and all (remaining) dual functions, grouped into $\tilde v$- and $\tilde e$-functions, has the $2 \times 2$ block structure
  $\left[\begin{array}{@{}cc@{}} \identity & \tfrac{9}{32} \identity - \tfrac{31}{32} \1 \\ 0 & \identity \end{array}\right]$,
with $\1$ the $3 \times 3$ all-ones matrix (and with the $\tilde e$-functions ordered as the `opposite' $v$-functions).
The invertibility of this matrix confirms that both collections of 6 primal and 6 dual functions are linearly independent.

We use these primal and dual functions on the reference triangle $\hat{T}$ to construct collections of primal and dual functions on $\Omega$ by the usual lifting by means of an affine bijection between $\hat{T}$ and any $T' \in \tria^\delta$. When doing so, we connect the functions of $e$ or $\tilde e$-type continuously over `their' edges, and omit them on edges on $\partial\Omega$.

Each function of $v$ or $\tilde v$-type is supported on one $T' \in \tria^\delta$, and we multiply them by the factor  $|T'|^{-\frac12}$. The functions of $e$ or $\tilde e$-type are supported on two adjacent $T',T'' \in \tria^\delta$, and we multiply them by the factor  $(|T'|+|T''|)^{-\frac12}$.

By their construction, the resulting primal and dual collections, denoted by $\Phi^\delta$ and $\tilde{\Phi}^\delta$, are uniformly $L_2(\Omega)$-Riesz systems, with mass matrices whose extremal eigenvalues are inside the interval spanned by the extremal eigenvalues of the corresponding primal or dual mass matrices on the reference triangle.

Furthermore, $\Span\Phi^\delta={\mathcal S}_{\tria^\delta,0}^{0,2}+{\mathcal S}_{\tria^\delta}^{-1,1}$,
and $\Span \tilde{\Phi}^\delta \subset {\mathcal S}_{\tria_S^\delta,0}^{0,2}$, with $\tria_S^\delta$ being constructed from $\tria^\delta$ by one uniform red-refinement.

The generalized mass matrix, i.e., the matrix with the $L_2(\Omega)$-inner products between all primal functions, grouped into $v$- and $e$-functions, and all dual functions, grouped into $\tilde v$- and $\tilde e$-functions, has the $2 \times 2$ block structure $\left[\begin{array}{@{}cc@{}} \identity & * \\ 0 & \identity \end{array}\right]$.
The uniform $L_2(\Omega)$-Riesz basis property of both $\Phi^\delta$ and $\tilde{\Phi}^\delta$ shows that the spectral norm of the non-zero off-diagonal block is uniformly bounded.
By now redefining $\Phi^\delta \leftarrow \left[\begin{array}{@{}cc@{}} \identity & -* \\ 0 & \identity \end{array}\right] \Phi^\delta$, we obtain  primal and dual uniformly $L_2(\Omega)$-Riesz systems that are \emph{biorthogonal}, where
$\Span \Phi^\delta={\mathcal S}_{\tria^\delta,0}^{0,2}+{\mathcal S}_{\tria^\delta}^{-1,1}$ and
$\Span \tilde{\Phi}^\delta \subset {\mathcal S}_{\tria_S^\delta,0}^{0,2}$.

In view of the supports of the dual functions, and those of the primal functions before the last transformation, we infer that the support of a function in $\tilde{\Phi}^\delta$ is contained in either one $T' \in \tria^\delta$ ($\tilde v$-type), or in the union of two triangles from $\tria^\delta$ that share an edge ($\tilde e$-type),
and that the support of a function in $\Phi^\delta$ is contained in either the union of two triangles from $\tria^\delta$ that share an edge ($e$-type), or in the union of $T' \in \tria^\delta$ and those at most three $T''  \in \tria^\delta$ that share an edge with $T'$.
We conclude that the biorthogonal projector $P_2^\delta\colon u \mapsto \langle u, \Phi^\delta\rangle_{L_2(\Omega)} \tilde{\Phi}^\delta$ satisfies both conditions \eqref{35relax} and \eqref{36}.

%%%%%%%%%%%%

%\bibliographystyle{alpha}
%\bibliography{../ref}

\begin{thebibliography}{MPPY15}

\bibitem[AK01]{15.91}
O.~Axelsson and I.~Kaporin.
\newblock Error norm estimation and stopping criteria in preconditioned
  conjugate gradient iterations.
\newblock {\em Numer. Linear Algebra Appl.}, 8(4):265--286, 2001.

\bibitem[And13]{11}
R.~Andreev.
\newblock Stability of sparse space-time finite element discretizations of
  linear parabolic evolution equations.
\newblock {\em IMA J. Numer. Anal.}, 33(1):242--260, 2013.

\bibitem[And16]{12.5}
R.~Andreev.
\newblock Wavelet-in-time multigrid-in-space preconditioning of parabolic
  evolution equations.
\newblock {\em SIAM J. Sci. Comput.}, 38(1):A216--A242, 2016.

\bibitem[Ash15]{14.4}
A.~C.~L. Ashton.
\newblock Elliptic {PDE}s with constant coefficients on convex polyhedra via
  the unified method.
\newblock {\em J. Math. Anal. Appl.}, 425(1):160--177, 2015.

\bibitem[BBLD15]{MR3461700} E. B\'ecache, L. Bourgeois, L. Franceschini, and J. Dard\'e.
\newblock Application of mixed formulations of quasi-reversibility to solve ill-posed problems for heat and wave equations: the 1D case.
\newblock {\em Inverse Probl. Imaging}, 9(4):971--1002, 2015.

\bibitem[BCD{\etalchar{+}}11]{22.56}
P.~Binev, A.~Cohen, W.~Dahmen, R.~DeVore, G.~Petrova, and P.~Wojtaszczyk.
\newblock Convergence rates for greedy algorithms in reduced basis methods.
\newblock {\em SIAM J. Math. Anal.}, 43(3):1457--1472, 2011.


\bibitem[BFMO21]{MR4221321} E. Burman, A.  Feizmohammadi, A. M\"unch, and L. Oksanen.
\newblock  Space time stabilized finite element methods for a unique continuation problem subject to the wave equation. 
\newblock {\em  ESAIM Math. Model. Numer. Anal.} 55:S969--S991, 2021.

\bibitem[BIHO18]{35.926}
E.~Burman, J.~Ish-Horowicz, and L.~Oksanen.
\newblock Fully discrete finite element data assimilation method for the heat
  equation.
\newblock {\em ESAIM Math. Model. Numer. Anal.}, 52(5):2065--2082, 2018.


\bibitem[BLO18]{MR3885757} E. Burman, M. G. Larson, and L. Oksanen.
\newblock  Primal- dual mixed finite element methods for the elliptic Cauchy problem. 
\newblock {\em SIAM J. Numer. Anal.} 56(6): 3480--3509, 2018.

\bibitem[BO18]{35.925}
E.~Burman and L.~Oksanen.
\newblock Data assimilation for the heat equation using stabilized finite
  element methods.
\newblock {\em Numer. Math.}, 139(3):505--528, 2018.

\bibitem[BR18]{MR3808155}
L. Bourgeois,  and A. Recoquillay.
\newblock A mixed formulation of the Tikhonov regularization and its application to inverse PDE problems. 
\newblock {\em ESAIM Math. Model. Numer. Anal.} 52(1):123--145, 2018.

\bibitem[BY14]{19.26}
R.E. Bank and H.~Yserentant.
\newblock On the {$H^1$}-stability of the {$L_2$}-projection onto finite
  element spaces.
\newblock {\em Numer. Math.}, 126(2):361--381, 2014.


\bibitem[CDD{\etalchar{+}}20]{45.48}
A.~Cohen, W.~Dahmen, R.~DeVore, J.~Fadili, O.~Mula, and J.~Nichols.
\newblock Optimal reduced model algorithms for data-based state estimation.
\newblock {\em SIAM J. Numer. Anal.}, 58(6):3355--3381, 2020.

\bibitem[CDG14]{35.93556}
C.~Carstensen, L.~Demkowicz, and J.~Gopalakrishnan.
\newblock A posteriori error control for {DPG} methods.
\newblock {\em SIAM J. Numer. Anal.}, 52(3):1335--1353, 2014.

\bibitem[CDW12]{45.44}
A.~Cohen, W.~Dahmen, and G.~Welper.
\newblock Adaptivity and variational stabilization for convection-diffusion
  equations.
\newblock {\em {ESAIM Math. Model. Numer. Anal.}},
  46:1247--1273, 2012.

\bibitem[Che14]{38.71}
L.~Chen.
\newblock A simple construction of a {F}ortin operator for the two dimensional
  {T}aylor-{H}ood element.
\newblock {\em Comput. Math. Appl.}, 68(10):1368--1373, 2014.

\bibitem[Dal94]{58.9}
R.~Daley.
\newblock {\em Atmospheric Data Analysis}.
\newblock Cambridge Atmospheric and Space Science Series. Cambridge University
  Press, Cambridge, UK, 1994.

\bibitem[DHH13]{MR3079321} J. Dard\'e,  A. Hannukainen,  and N. Hyv\"onen.
\newblock An Hdiv-based mixed quasi-reversibility method for solving elliptic Cauchy problems. 
\newblock {\em SIAM J. Numer. Anal.} 51(4):2123--2148, 2013.


\bibitem[DL92]{63}
R.~Dautray and J.-L. Lions.
\newblock {\em Mathematical analysis and numerical methods for science and
  technology. {V}ol. 5}.
\newblock Springer-Verlag, Berlin, 1992.
\newblock Evolution problems I.

\bibitem[FK21]{75.257}
T.~F\"uhrer and M.~Karkulik.
\newblock Space-time least-squares finite elements for parabolic equations.
\newblock {\em Comput. Math. Appl.} 92:27--36, 2021.


\bibitem[GM97]{75.56}
G.~H. Golub and G.~Meurant.
\newblock Matrices, moments and quadrature. {II}. {H}ow to compute the norm of
  the error in iterative methods.
\newblock {\em BIT}, 37(3):687--705, 1997.

\bibitem[GS21]{75.28}
G.~Gantner and R.P. Stevenson.
\newblock Further results on a space-time FOSLS formulation of parabolic PDEs.
\newblock {\em ESAIM Math. Model. Numer. Anal.} 55(1):283--299, 2021.


\bibitem[LLD06]{169.07}
J.M. Lewis, S.~Lakshmivarahan, and S.~Dhall.
\newblock {\em Dynamic Data Assimilation}.
\newblock Encyclopedia of Mathematics and its Applications. Cambridge
  University Press, Cambridge, UK, 2006.

\bibitem[Maj16]{199.6}
A.J. Majda.
\newblock {\em Introduction to turbulent dynamical systems in complex systems},
  volume~5 of {\em Frontiers in Applied Dynamical Systems: Reviews and
  Tutorials}.
\newblock Springer, 2016.

\bibitem[MPPY15]{192.5}
Y.~Maday, A.T. Patera, J.D. Penn, and M.~Yano.
\newblock A parameterized-background data-weak approach to variational data
  assimilation: formulation, analysis, and application to acoustics.
\newblock {\em Int. J. Numer. Methods Eng.}, 102(5):933--965, 2015.

\bibitem[MS17]{MR3680382} A. M\"unch, and D.A. Souza.
 \newblock Inverse problems for linear parabolic equations using mixed formulations - Part 1: Theoretical analysis. 
\newblock {\em J. Inverse Ill-Posed Probl.}  25(4):445--468, 2017,

\bibitem[MT13]{203.6}
G.~Meurant and P.~Tich\'{y}.
\newblock On computing quadrature-based bounds for the {$A$}-norm of the error
  in conjugate gradients.
\newblock {\em Numer. Algorithms}, 62(2):163--191, 2013.

\bibitem[RS18]{243.85}
N.~Rekatsinas and R.~Stevenson.
\newblock An optimal adaptive wavelet method for first order system least
  squares.
\newblock {\em Numer. Math.}, 140(1):191--237, 2018.

\bibitem[Sch97]{247.06}
J.~Sch{\oumlaut}berl.
\newblock {NETGEN} an advancing front 2d/3d-mesh generator based on abstract
  rules.
\newblock {\em Comput. Vis. Sci}, 1(1), 1997.

\bibitem[Sch14]{247.065}
J.~Sch{\oumlaut}berl.
\newblock C++11 implementation of finite elements in ngsolve.
\newblock Technical report, Institute for Analysis and Scientific Computing.
  Vienna University of Technology, 2014.

\bibitem[SS09]{247.15}
Ch. Schwab and R.P. Stevenson.
\newblock A space-time adaptive wavelet method for parabolic evolution
  problems.
\newblock {\em Math. Comp.}, 78:1293--1318, 2009.


\bibitem[SvVW21]{249.991}
R.P. Stevenson, R.~van Venetië, and J.~Westerdiep.
\newblock A wavelet-in-time, finite element-in-space adaptive method for
  parabolic evolution equations.
  \newblock 2021, arXiv:2101.03956.

\bibitem[SW20]{249.99}
R.P. Stevenson and J.~Westerdiep.
\newblock Stability of {G}alerkin discretizations of a mixed space-time
  variational formulation of parabolic evolution equations.
\newblock {\em {IMA J. Numer. Anal.}}, 2020.


\bibitem[SW21]{249.992}
R.~Stevenson and J.~Westerdiep.
\newblock {Minimal residual space-time discretizations of parabolic equations:
 Asymmetric spatial operators}.
\newblock 2021, arXiv:2106.01090.


\bibitem[Wlo82]{314.9}
J.~Wloka.
\newblock {\em Partielle {D}ifferentialgleichungen}.
\newblock B. G. Teubner, Stuttgart, 1982.
\newblock Sobolevr\"aume und Randwertaufgaben.

\end{thebibliography}

\newcommand{\etalchar}[1]{$^{#1}$}

\end{document}